\title{Notes on toric varieties from Mori theoretic viewpoint, II} 
\author{Osamu Fujino and Hiroshi Sato} 
\subjclass[2010]{Primary 14M25; Secondary 14E30.}
\date{2018/3/26, version 0.28}
\keywords{toric Mori theory, lengths of extremal 
rays, Fujita's conjectures, vanishing theorem}
\address{Department of Mathematics, Graduate School of 
Science, Osaka University, Toyonaka, Osaka 560-0043, Japan}
\email{fujino@math.sci.osaka-u.ac.jp}
\address{Department of Applied Mathematics, Faculty of Sciences, 
Fukuoka University, 8-19-1, Nanakuma, Jonan-ku, Fukuoka 814-0180, Japan}
\email{hirosato@fukuoka-u.ac.jp}
\newcommand{\Pic}[0]{{\operatorname{Pic}}}
\newcommand{\Int}[0]{{\operatorname{Int}}}
\newcommand{\Star}[0]{{\operatorname{Star}}}
\newcommand{\mult}[0]{{\operatorname{mult}}}
\newcommand{\codim}[0]{{\operatorname{codim}}}
\newcommand{\xSupp}[0]{{\operatorname{Supp}}}
\newcommand{\Nef}[0]{{\operatorname{Nef}}}
\newcommand{\PE}[0]{{\operatorname{PE}}}
\newcommand{\Amp}[0]{{\operatorname{Amp}}}
\newcommand{\NE}[0]{{\operatorname{NE}}}
\newtheorem{thm-a}{Theorem}[section]
\newtheorem{thm}{Theorem}[subsection]
\newtheorem{lem}[thm]{Lemma}
\newtheorem{cor}[thm]{Corollary}
\newtheorem{cor-a}[thm-a]{Corollary}
\newtheorem{prop}[thm]{Proposition}
\newtheorem*{claim}{Claim}
\theoremstyle{definition}
\newtheorem{ex}[thm]{Example}
\newtheorem{defn}[thm]{Definition}
\newtheorem{rem}[thm]{Remark}
\newtheorem*{ack}{Acknowledgments}       
\newtheorem{say}[thm]{}
\newtheorem{case}{Case}
\newtheorem{step}{Step}
\begin{document}
\bibliographystyle{amsalpha+}

\begin{abstract}
We give new estimates of lengths of 
extremal rays of birational type for toric 
varieties. 
We can see that our new estimates are the best by constructing 
some examples explicitly. 
As applications, 
we discuss the nefness and pseudo-effectivity of 
adjoint bundles of projective 
toric varieties. 
We also treat some generalizations of Fujita's freeness and very ampleness 
for toric varieties. 
\end{abstract}

\maketitle

\tableofcontents
\section{Introduction}

The following theorem is one of the main results of this paper. 
Our proof of Theorem \ref{f-thm1.1} 
uses the framework of the toric Mori theory developed by 
\cite{reid}, \cite{fujino-notes}, 
\cite{fujino-equiv}, \cite{fujino-osaka}, \cite{fujino-sato}, and so on. 

\begin{thm-a}[Theorem \ref{f-thm4.2.3} and 
Corollary \ref{f-cor4.2.4}]\label{f-thm1.1} 
Let $X$ be a $\mathbb Q$-Gorenstein 
projective toric $n$-fold and let $D$ be an ample Cartier divisor on $X$. 
Then $K_X+(n-1)D$ is pseudo-effective if and only if $K_X+(n-1)D$ is 
nef. 
In particular, if $X$ is Gorenstein, 
then \[H^0(X, \mathcal O_X(K_X+(n-1)D))\ne 0\] 
if and only if the complete linear system $| K_X+(n-1)D|$ is 
basepoint-free. 
\end{thm-a}

This theorem was inspired by Lin's paper (see \cite{lin}). 
Our proof of Theorem \ref{f-thm1.1} depends on the following new 
estimates of lengths 
of extremal rays of birational type for toric varieties. 

\begin{thm-a}[Theorem \ref{f-thm3.2.1}]\label{f-thm1.2} 
Let $f:X\to Y$ be a projective toric morphism with $\dim X=n$. 
Assume that $K_X$ is $\mathbb Q$-Cartier. 
Let $R$ be a $K_X$-negative 
extremal ray of $\NE(X/Y)$ and let $\varphi_R:X\to W$ 
be the contraction morphism associated to $R$. We put 
\[l(R)=\min_{[C]\in R} (-K_X\cdot C).\] 
and call it the length of $R$. 
Assume that $\varphi_R$ is birational. 
Then we obtain 
\[
l(R)<d+1, 
\]
where \[d=\max_{w\in W} \dim \varphi^{-1}_R(w)\leq n-1.\] 
When $d=n-1$, we have a sharper inequality 
\[
l(R)\leq d=n-1. 
\] 
In particular, if 
$l(R)=n-1$, 
then $\varphi_R:X\to W$ can be described as follows. 
There exists a torus invariant smooth point $P\in W$ such that 
$\varphi_R:X\to W$ 
is a weighted blow-up at $P$ with the weight $(1, a, \cdots, a)$ for some 
positive integer $a$. 
In this case, the exceptional locus $E$ of $\varphi_R$ is a torus invariant 
prime divisor and is isomorphic to $\mathbb P^{n-1}$. 
Moreover, $X$ is $\mathbb Q$-factorial in a neighborhood of 
$E$. 
\end{thm-a}

Theorem \ref{f-thm1.2} supplements \cite[Theorem 0.1]{fujino-notes} 
(see also \cite[Theorem 3.13]{fujino-equiv}). 
We will see that the estimates obtained in Theorem \ref{f-thm1.2} 
are the best by constructing some examples explicitly 
(see Examples \ref{f-ex3.3.1} and \ref{f-ex3.3.2}). 
For lengths of extremal rays for non-toric varieties, 
see \cite{kawamata}. As an application 
of Theorem \ref{f-thm1.2}, we can prove the following theorem 
on lengths of extremal rays for $\mathbb Q$-factorial toric varieties. 

\begin{thm-a}[Theorem \ref{f-thm3.2.9}]\label{f-thm1.3}
Let $X$ be a $\mathbb Q$-Gorenstein 
projective toric $n$-fold with $\rho (X)\geq 2$. 
Let $R$ be a $K_X$-negative extremal ray of 
$\NE(X)$ such that 
\[
l(R)=\min _{[C]\in R}(-K_X\cdot C)>n-1. 
\] 
Then the extremal contraction $\varphi_R:X\to W$ associated to 
$R$ is a $\mathbb P^{n-1}$-bundle over $\mathbb P^1$. 
\end{thm-a}

As a direct easy consequence of Theorem \ref{f-thm1.3}, we obtain 
the following corollary, which supplements Theorem \ref{f-thm1.1}. 

\begin{cor-a}[Corollary \ref{f-cor4.2.5}]\label{f-cor1.4}  
Let $X$ be a $\mathbb Q$-Gorenstein projective 
toric $n$-fold and let $D$ be an ample Cartier divisor 
on $X$. 
If $\rho (X)\geq 3$, then $K_X+(n-1)D$ is always nef. 
More precisely, 
if $\rho (X)\geq 2$ and $X$ is not a $\mathbb P^{n-1}$-bundle 
over $\mathbb P^1$, then $K_X+(n-1)D$ is nef. 
\end{cor-a} 

In this paper, we also give some generalizations of 
Fujita's freeness and very ampleness for toric varieties  
based on our powerful vanishing 
theorem (see \cite{fujino-vanishing} and \cite{fujino-toric}). 
As a very special case of our generalization of Fujita's freeness for toric varieties 
(see Theorem \ref{f-thm4.1.1}), 
we can easily recover some parts of Lin's theorem (see \cite[Main Theorem A]{lin}). 

\begin{thm-a}[Corollary \ref{f-cor4.1.2}]\label{f-thm1.5} 
Let $X$ be an $n$-dimensional projective toric variety 
and let $D$ be an ample Cartier divisor on $X$. Then
the reflexive sheaf $\mathcal O_X(K_X+(n+1)D)$ is generated
by its global sections.
\end{thm-a}

By the same way, we can obtain a generalization of Fujita's very ampleness 
for toric varieties (see Theorem \ref{f-thm4.1.8}). 

\begin{thm-a}[Theorem \ref{f-thm4.1.6}]\label{f-thm1.6} 
Let $f:X\to Y$ be a proper surjective toric 
morphism, let $\Delta$ be a reduced torus invariant 
divisor on $X$ such that $K_X+\Delta$ is Cartier, 
and let $D$ be an $f$-ample Cartier divisor on $X$. 
Then $\mathcal O_X(K_X+\Delta+kD)$ is $f$-very ample for 
every $k\geq \max_{y\in Y} \dim f^{-1}(y) +2$.  
\end{thm-a}

For the precise statements of our generalizations of 
Fujita's freeness and very ampleness for toric varieties, 
see Theorems \ref{f-thm4.1.1} and \ref{f-thm4.1.8}. 
We omit them here since they are technically complicated. 

This paper is organized as follows. 
In Section \ref{f-sec2}, we collect some basic definitions and 
results. In subsection \ref{f-subsec2.1}, we 
explain the basic concepts of the toric geometry. 
In subsection \ref{f-subsec2.2}, we recall the definitions of 
{\em{the Kleiman--Mori cone}}, {\em{the nef cone}}, {\em{the ample cone}}, 
and {\em{the pseudo-effective cone}} for toric projective morphisms, 
and some related results. Section \ref{f-sec3} is the main part of this paper. 
After recalling the known estimates of lengths of extremal rays 
for projective toric varieties in 
subsection \ref{f-subsec3.1}, we give 
new estimates of lengths of extremal rays of toric birational contraction 
morphisms in subsection \ref{f-subsec3.2}. 
In subsection \ref{f-subsec3.3}, we see that the estimates 
obtained in subsection \ref{f-subsec3.2} are the best by 
constructing some examples explicitly. 
Section \ref{f-sec4} treats 
Fujita's freeness and very ampleness for toric varieties. 
The results in subsection \ref{f-subsec4.1} depend 
on our powerful vanishing theorem 
for toric varieties and are independent of our estimates of lengths of 
extremal rays for toric varieties. Therefore, 
subsection \ref{f-subsec4.1} is independent of 
the other parts of this paper. 
In subsection \ref{f-subsec4.2}, we discuss Lin's problem (see \cite{lin}) related to 
Fujita's freeness for toric varieties. We use our new estimates of 
lengths of extremal rays in this subsection. 
Subsection \ref{f-subsec4.3} is a supplement to Fujita's paper:~\cite{fujita}. 
This paper contains various supplementary results for \cite{fujita}, 
\cite{fulton}, \cite{lin}, and so on. 

\begin{ack}\label{f-ack}
The first author was partially supported by JSPS KAKENHI Grant 
Numbers JP16H03925, JP16H06337. 
If the first author remembers correctly, he prepared 
a preliminary version of this paper around 2006 in Nagoya. 
Then his interests moved to the minimal model program. 
In 2011, he revised it in Kyoto. 
The current version was written in Osaka. 
He thanks the colleagues in Nagoya, Kyoto, and 
Osaka very much.  
\end{ack}

We will work over an arbitrary algebraically closed field throughout this 
paper. For the standard notations of the minimal model program, 
see \cite{fujino-fundamental} and \cite{fujino-foundation}. 
For the toric Mori theory, we recommend the reader to see \cite{reid}, 
\cite[Chapter 14]{matsuki}, \cite{fujino-notes}, and \cite{fujino-sato} (see 
also \cite{cls}). 

\section{Preliminaries}\label{f-sec2}

This section collects some basic definitions and results. 

\subsection{Basics of the toric geometry}\label{f-subsec2.1}
In this subsection, we recall the basic notion of toric varieties 
and fix the notation. For the details, see \cite{oda}, \cite{fulton}, 
\cite{reid}, or \cite[Chapter 14]{matsuki} (see also \cite{cls}). 

\begin{say}\label{f-say2.1.1}
Let $N\simeq \mathbb Z^n$ be a lattice of rank $n$. 
A toric variety $X(\Sigma)$ is associated to a {\em{fan}} $\Sigma$, 
a correction of convex cones $\sigma\subset N_\mathbb R =
N\otimes _{\mathbb Z}\mathbb R$ satisfying: 
\begin{enumerate}
\item[(i)] 
Each convex cone $\sigma$ is a rational polyhedral cone in the sense that  
there are finitely many 
$v_1, \cdots, v_s\in N\subset N_{\mathbb R}$ such 
that 
\[
\sigma=\{r_1v_1+\cdots +r_sv_s; \ r_i\geq 0\}=
:\langle v_1, \cdots, v_s\rangle, 
\] 
and 
it is strongly convex in the sense that  
\[
\sigma \cap -\sigma=\{0\}. 
\]
\item[(ii)] Each face $\tau$ of a convex cone $\sigma\in \Sigma$ 
is again an element in $\Sigma$. 
\item[(iii)] The intersection of two cones in $\Sigma$ is a face of 
each. 
\end{enumerate}

\begin{defn}\label{f-def2.1.2}
The {\em{dimension}} $\dim \sigma$ of a cone $\sigma$ is 
the dimension of the linear space 
$\mathbb R\cdot \sigma=\sigma +(-\sigma)$ spanned 
by $\sigma$. 

We define the sublattice $N_{\sigma}$ of 
$N$ generated (as a subgroup) by $\sigma\cap N$ as 
follows: 
\[
N_{\sigma}:=\sigma\cap N+(-\sigma\cap N). 
\]

If $\sigma$ is a $k$-dimensional simplicial 
cone, and $v_1,\cdots, v_k$ are the 
first lattice points along the edges of $\sigma$, 
the {\em{multiplicity}} of $\sigma$ is defined 
to be the {\em{index}} of the lattice 
generated by the $\{v_1, \cdots, v_k\}$ in the lattice $N_{\sigma}$; 
\[
\mult (\sigma):=[N_{\sigma}:\mathbb Zv_1+\cdots +
\mathbb Zv_k]. 
\] 
We note that the affine toric variety 
$X(\sigma)$ associated to the cone $\sigma$ is smooth if and only 
if $\mult (\sigma)=1$. 
\end{defn}
\end{say}

The following is a well-known fact. See, for example, 
\cite[Lemma 14-1-1]{matsuki}. 

\begin{lem}\label{f-lem2.1.3}
A toric variety $X(\Sigma)$ is $\mathbb Q$-factorial 
if and only if each cone $\sigma\in \Sigma$ is simplicial. 
\end{lem}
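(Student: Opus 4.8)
The plan is to reduce the statement to a single affine chart and there translate $\mathbb Q$-factoriality into the linear-algebraic condition that the primitive ray generators of each cone be linearly independent. First I would use that $\mathbb Q$-factoriality is local: since $X(\Sigma)=\bigcup_{\sigma\in\Sigma}X(\sigma)$ is an open cover by the affine toric varieties $X(\sigma)$, and $X(\tau)\subseteq X(\sigma)$ whenever $\tau$ is a face of $\sigma$, the variety $X(\Sigma)$ is $\mathbb Q$-factorial if and only if every chart $X(\sigma)$ is. Likewise, since every face of a simplicial cone is again simplicial, the condition ``every $\sigma\in\Sigma$ is simplicial'' is equivalent to ``every maximal cone of $\Sigma$ is simplicial''. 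Thus it suffices to show, for a single cone $\sigma$, that $X(\sigma)$ is $\mathbb Q$-factorial if and only if $\sigma$ is simplicial. Using the splitting $X(\sigma)\cong X_{N_\sigma}(\sigma)\times(\mathbb G_m)^{\,n-\dim\sigma}$, where $X_{N_\sigma}(\sigma)$ is the affine toric variety of $\sigma$ with respect to the lattice $N_\sigma$, together with the smoothness of the torus factor (so that $\mathbb Q$-factoriality of $X(\sigma)$ is equivalent to that of $X_{N_\sigma}(\sigma)$), I may further assume that $\sigma$ spans $N_{\mathbb R}$.

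Next, let $\rho_1,\dots,\rho_r$ be the rays of $\sigma$, let $v_1,\dots,v_r\in N$ be their primitive generators, and let $D_1,\dots,D_r$ be the corresponding torus invariant prime divisors on $X(\sigma)$. I would invoke two standard facts from toric divisor theory: (a) $\operatorname{Cl}(X(\sigma))$ is generated by the classes $[D_1],\dots,[D_r]$ — equivalently, every Weil divisor on $X(\sigma)$ is linearly equivalent to a torus invariant one; and (b) a torus invariant Weil divisor $\sum_{i=1}^r a_iD_i$ is Cartier (equivalently, since $X(\sigma)$ is affine, principal of the form $\operatorname{div}(\chi^m)$) exactly when there is $m\in M=\operatorname{Hom}(N,\mathbb Z)$ with $\langle m,v_i\rangle=-a_i$ for all $i$, and hence $\mathbb Q$-Cartier exactly when such an $m$ exists in $M_{\mathbb Q}$. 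By (a), $X(\sigma)$ is $\mathbb Q$-factorial if and only if every torus invariant Weil divisor is $\mathbb Q$-Cartier; by (b) and rescaling, this happens if and only if the $\mathbb Q$-linear map $\psi\colon M_{\mathbb Q}\to\mathbb Q^{r}$, $m\mapsto(\langle m,v_i\rangle)_{i=1}^{r}$, is surjective.

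Finally I would conclude by a dimension count: since $\sigma$ spans $N_{\mathbb R}$, the generators $v_1,\dots,v_r$ span $N_{\mathbb R}$, so $r\ge n=\dim_{\mathbb Q}M_{\mathbb Q}$, and therefore $\psi$ is surjective precisely when $r=n$ and $v_1,\dots,v_r$ are linearly independent, i.e. precisely when $\sigma$ is simplicial. This completes the argument. The only substantive ingredients are the combinatorial facts (a) and (b), which are entirely standard (see \cite{fulton}, \cite[Chapter 14]{matsuki}, or \cite{cls}); once they are granted the proof is pure linear algebra, so I do not expect any genuine obstacle — the one place to be slightly careful is checking that $\mathbb Q$-factoriality passes between $X(\sigma)$ and $X_{N_\sigma}(\sigma)$ and that it may be tested on torus invariant divisors alone.
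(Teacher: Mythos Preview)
Your argument is correct and is essentially the standard proof one finds in the references. Note, however, that the paper does not actually prove this lemma: it is stated as a well-known fact with a bare citation to \cite[Lemma 14-1-1]{matsuki}. So there is no ``paper's own proof'' to compare against beyond that reference; what you have written is a perfectly good expansion of the cited result, reducing to a single full-dimensional affine chart and then translating $\mathbb Q$-factoriality into surjectivity of the evaluation map $M_{\mathbb Q}\to\mathbb Q^r$, which forces $r=n$.
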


\begin{say}\label{f-say2.1.4}
The {\em{star}} of a cone $\tau$ can be defined abstractly 
as the set of cones $\sigma$ in $\Sigma$ that 
contain $\tau$ as a face. Such cones $\sigma$ are 
determined by their images in 
$N(\tau):=N/{N_{\tau}}$, that is, by 
\[
\overline \sigma=\sigma+(N_{\tau})_{\mathbb R}/ 
(N_{\tau})_{\mathbb R}\subset N(\tau)_{\mathbb R}. 
\] 
These cones $\{\overline \sigma ; \tau\prec \sigma\}$ 
form a fan in $N(\tau)$, and we denote this fan by 
$\Star(\tau)$. 
We set $V(\tau)=X(\Star (\tau))$, that is, the toric variety associated to 
the fan $\Star (\tau)$. 
It is well known that $V(\tau)$ is an $(n-k)$-dimensional 
closed toric subvariety of $X(\Sigma)$, where $\dim \tau=k$. 
If $\dim V(\tau)=1$ (resp.~$n-1$), then we call $V(\tau)$ 
a {\em{torus invariant curve}} (resp.~{\em{torus invariant 
divisor}}). 
For the details about the correspondence between $\tau$ and 
$V(\tau)$, see \cite[3.1 Orbits]{fulton}. 
\end{say}

\begin{say}[Intersection theory for $\mathbb Q$-factorial 
toric varieties]\label{f-say2.1.5}
Assume that $\Sigma$ is simplicial. 
If $\sigma, \tau\in \Sigma$ span $\gamma\in \Sigma$ with 
$\dim \gamma=\dim \sigma +\dim \tau$, 
then 
\[
V(\sigma)\cdot V(\tau)=\frac{\mult (\sigma)\cdot \mult(\tau)}
{\mult (\gamma)} V(\gamma)
\] 
in the {\em{Chow group}} $A^{*}(X)_{\mathbb Q}$. 
For the details, see \cite[5.1 Chow groups]{fulton}. 
If $\sigma$ and $\tau$ are contained in no cone of 
$\Sigma$, then $V(\sigma)\cdot V(\tau)=0$. 
\end{say}

\subsection{Cones of divisors}\label{f-subsec2.2} 
In this subsection, we explain various cones of divisors 
and some related topics. 

\begin{say}\label{f-say2.2.1}
Let $f:X\to Y$ be a proper toric 
morphism; 
a $1$-cycle of $X/Y$ is a formal sum $\sum a_iC_i$ with 
complete curves $C_i$ in the fibers of $f$, and 
$a_i\in \mathbb Z$. 
We put 
\[
Z_1(X/Y):=\{1\text{-cycles of} \ X/Y\}, 
\]
and 
\[
Z_1(X/Y)_{\mathbb R}:=
Z_1(X/Y)\otimes \mathbb R.  
\]
There is a pairing 
\[
\Pic (X)\times Z_1(X/Y)_{\mathbb R}
\to \mathbb R
\]
defined by 
$(\mathcal L, C)\mapsto \deg _C\mathcal L$, extended 
by bilinearity. 
We define 
\[
N^1(X/Y):=(\Pic (X)\otimes \mathbb R)/\equiv
\]
and 
\[
N_1(X/Y):= Z_1(X/Y)_{\mathbb R}/\equiv, 
\] 
where the {\em numerical equivalence} $\equiv$ 
is by definition the smallest equivalence relation which 
makes $N^1$ and $N_1$ into dual 
spaces. 

Inside $N_1(X/Y)$ there is a distinguished cone of effective 
$1$-cycles of $X/Y$, 
\[
{\NE}(X/Y)=\{\, Z\, | \ Z\equiv 
\sum a_iC_i \ \text{with}\ a_i\in \mathbb R_{\geq 0}\}
\subset N_1(X/Y), 
\] 
which is usually called the {\em{Kleiman--Mori cone}} of $f:X\to Y$. 
It is known that $\NE(X/Y)$ is a rational polyhedral cone. 
A face $F\prec {\NE}(X/Y)$ is called an {\em{extremal face}} 
in this case. 
A one-dimensional extremal face is called an {\em{extremal 
ray}}. 

We define the {\em{relative Picard number}} $\rho(X/Y)$ by
\[
\rho (X/Y):=\dim _{\mathbb Q}N^1(X/Y)< \infty. 
\]
An element $D\in N^1(X/Y)$ is called {\em{$f$-nef}} if 
$D\geq 0$ on ${\NE}(X/Y)$. 

If $X$ is complete and $Y$ is a point, then we 
write ${\NE}(X)$ and $\rho(X)$ 
for ${\NE}(X/Y)$ and $\rho(X/Y)$, respectively. 
We note that $N_1(X/Y)\subset N_1(X)$, and $N^1(X/Y)$ 
is the corresponding 
quotient of $N^1(X)$. 

From now on, we assume that $X$ is complete.
We define the {\em{nef cone}} $\Nef (X)$, the
{\em{ample cone}} $\Amp(X)$, and 
the
{\em{pseudo-effective cone}} $\PE(X)$ in $N^1(X)$ as follows.
\[
\Nef(X)=\{D\, |\,  D \ \text{is nef}\},
\]
\[
\Amp(X)=\{D\, | \, D\ \text{is ample}\}
\]
and
\[
\PE(X)=\left\{D\, \left|  \begin{array}{l} 
D\equiv \sum a_i D_i \ \text{such that
$D_i$ is an effective}\\ \text{Cartier divisor 
and $a_i\in \mathbb R_{\geq 0}$ for every $i$
}\end{array}\right. \right\}.
\]
It is not difficult to see that $\PE(X)$ is a rational polyhedral cone in
$N^1(X)$ since $X$ is toric.
It is easy to see that \[\Amp(X)\subset \Nef (X)\subset \PE(X).\] 
The reader can find various examples of 
cones of divisors and curves in \cite{kleiman}, \cite{fujino-payne}, 
and \cite{fujino-sato2}. Although we do not use it explicitly in this paper, 
Lemma \ref{f-lem2.2.2} is a very important property of toric varieties. 

\begin{lem}\label{f-lem2.2.2}
Let $X$ be a projective toric variety and let $D$ be a 
$\mathbb Q$-Cartier $\mathbb Q$-divisor on $X$. 
Then $D$ is pseudo-effective if and only if $\kappa (X, D)\geq 0$, 
that is, there exists a positive integer $m$ such that $mD$ is Cartier and that 
\[H^0(X, \mathcal O_X(mD))\ne 0. \]
\end{lem}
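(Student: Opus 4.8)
The plan is to reduce the statement to a basic fact about the pseudo-effective cone of a projective toric variety, namely that $\PE(X)$ is a \emph{rational} polyhedral cone generated by the classes of the torus invariant prime divisors $D_1,\dots,D_r$ (this is already asserted in \ref{f-say2.2.1}). The ``if'' direction is immediate: if some positive multiple $mD$ is Cartier and $H^0(X,\mathcal O_X(mD))\ne 0$, then $mD$ is linearly equivalent to an effective divisor, hence $D\in\PE(X)$, so $D$ is pseudo-effective. The real content is the ``only if'' direction, and here is where I would do the work.

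First I would replace $D$ by a positive integer multiple so that $D$ itself is Cartier; this does not affect either condition, since $\kappa(X,D)=\kappa(X,mD')$ for $D=mD'$ and pseudo-effectivity is a property of the ray. Next, since $D$ is a torus invariant $\mathbb Q$-Cartier divisor up to linear equivalence, I would use the fact that on a toric variety every Cartier divisor is linearly equivalent to a torus invariant one, so write $D\sim\sum_i a_iD_i$ with $a_i\in\mathbb Z$ (more precisely $a_i\in\mathbb Q$ before clearing denominators). The key step is then: if $D$ is pseudo-effective, its class lies in $\PE(X)=\sum_i\mathbb R_{\ge0}[D_i]$, so $[D]=\sum_i b_i[D_i]$ with $b_i\in\mathbb R_{\ge0}$. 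Because the relation lattice among the $[D_i]$ in $N^1(X)$ is defined over $\mathbb Q$ (it comes from the exact sequence $M\to\mathbb Z^r\to\operatorname{Pic}(X)\otimes\mathbb Q$, with $M$ the character lattice), a standard argument shows we may choose the $b_i$ to be rational, and then, after scaling by a common denominator, we get $mD\sim\sum_i c_iD_i$ with $c_i\in\mathbb Z_{\ge0}$ for some positive integer $m$. This exhibits $mD$ as (linearly equivalent to) an effective Cartier divisor, so $H^0(X,\mathcal O_X(mD))\ne0$, i.e.\ $\kappa(X,D)\ge0$.

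To justify that the $b_i$ can be taken rational: the set of $(t_1,\dots,t_r)\in\mathbb R^r$ with $\sum_i t_i[D_i]=[D]$ is a non-empty affine subspace defined by linear equations with rational coefficients (since $[D]$ and each $[D_i]$ lie in the rational structure $N^1(X)_{\mathbb Q}\subset N^1(X)$), hence its rational points are dense in it; intersecting with the open-ish condition $t_i>-\varepsilon$ for small $\varepsilon>0$—or rather, using that the non-negative orthant solutions form a rational polyhedron—produces a rational solution with all $t_i\ge0$. (Concretely: the polyhedron $\{t\in\mathbb R^r_{\ge0}:\sum t_i[D_i]=[D]\}$ is non-empty and rational, so it has a rational point.)

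The main obstacle is precisely this descent from real to rational coefficients, together with being careful that ``pseudo-effective'' as defined in \ref{f-say2.2.1} (limit of effective $\mathbb R$-classes, or rather $\mathbb R_{\ge0}$-combinations of effective Cartier classes) really does land one in the cone $\sum_i\mathbb R_{\ge0}[D_i]$—this uses that on a complete toric variety every effective Cartier divisor is linearly equivalent to an effective torus invariant one, which follows from the $T$-equivariant structure and the fact that global sections of $\mathcal O_X(D)$ are spanned by characters. Once these two toric inputs are in hand, the argument is a short exercise in the rationality of polyhedra, and I would keep the write-up correspondingly brief.
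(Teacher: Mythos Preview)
Your argument is correct and rests on the same core idea as the paper's: the pseudo-effective cone is rationally generated by torus invariant divisors, so a rational class in it is $\mathbb Q$-linearly equivalent to an effective divisor. The only difference is a preparatory step: the paper first pulls back along a smooth projective toric resolution $f:Y\to X$, replacing $(X,D)$ by $(Y,f^*D)$, and runs the argument there. On the smooth model every torus invariant prime divisor $D_i$ is Cartier, so the description $\PE(Y)=\sum_i\mathbb R_{\ge0}[D_i]\subset N^1(Y)$ is unambiguous and the descent to rational coefficients is a one-liner (``since $D$ is a $\mathbb Q$-divisor''). Your direct argument on $X$ is fine, but when $X$ is not $\mathbb Q$-factorial the individual classes $[D_i]$ need not live in $N^1(X)$, which is exactly the bookkeeping you find yourself doing at the end (working with effective torus invariant \emph{Cartier} combinations, or implicitly in $\mathrm{Cl}(X)_{\mathbb Q}$). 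The paper's resolution trick buys a cleaner write-up by sidestepping that issue; your version is more self-contained but has to carry the extra care.
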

\begin{proof}
It is sufficient to prove that $\kappa (X, D)\geq 0$ when 
$D$ is pseudo-effective. 
Let $f:Y\to X$ be a projective toric birational morphism 
from a smooth projective toric variety $Y$. 
By replacing $X$ and $D$ with $Y$ and $f^*D$, we may assume that 
$X$  is a smooth projective toric variety. 
In this case, it is easy to see that $\PE(X)$ is spanned 
by the numerical equivalence classes 
of torus invariant prime divisors. 
Therefore, we can write $D\equiv \sum _i a_i D_i$ where 
$D_i$ is a torus invariant prime divisor 
and $a_i\in \mathbb Q_{>0}$ for every $i$ since $D$ is 
a $\mathbb Q$-divisor. 
Thus, we obtain $D\sim _{\mathbb Q} \sum _i a_i D_i\geq 0$. 
This implies $\kappa (X, D)\geq 0$. 
\end{proof}

The following lemma is well known and is very important. 
We will use it in the subsequent sections repeatedly. 

\begin{lem}\label{f-lem2.2.3} 
Let $f:X\to Y$ be a toric proper morphism 
and let $D$ be an $f$-nef Cartier divisor on $X$. Then $D$ is $f$-free, 
that is, 
\[f^*f_*\mathcal O_X(D)\to \mathcal O_X(D)\] 
is surjective.  
\end{lem}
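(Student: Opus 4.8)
The plan is to reduce the claim to a well-known combinatorial fact about convexity of support functions. Surjectivity of $f^*f_*\mathcal O_X(D)\to\mathcal O_X(D)$ may be checked locally on $Y$, so I would first assume $Y=\operatorname{Spec}A$ is affine; since $f$ is toric, $Y$ is then an affine toric variety and $f$ is induced by a morphism of fans. Because $\mathcal O_X(D)$, its direct image, and the surjectivity in question depend only on the linear equivalence class of $D$, and every divisor class on a toric variety has a torus invariant representative (which is again Cartier when the class is), I would also assume $D=\sum_\rho a_\rho D_\rho$ is torus invariant, where the $D_\rho$ are the torus invariant prime divisors of $X=X(\Sigma)$ and $v_\rho\in N$ are the primitive generators of the rays $\rho$; set $M:=\operatorname{Hom}_{\mathbb Z}(N,\mathbb Z)$. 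Since $f$ is proper, $|\Sigma|$ is a convex cone, and in particular every maximal cone of $\Sigma$ is full dimensional inside it.

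Next I would recall the standard dictionary. As $D$ is Cartier, for each maximal cone $\sigma\in\Sigma$ there is an $m_\sigma\in M$ with $\langle m_\sigma,v_\rho\rangle=-a_\rho$ for every ray $\rho\prec\sigma$, and then $\mathcal O_X(D)|_{U_\sigma}$ is freely generated by the character $\chi^{m_\sigma}$. On the other hand $\Gamma(X,\mathcal O_X(D))$ is the $k$-span of the characters $\chi^m$ with $\langle m,v_\rho\rangle\ge -a_\rho$ for every ray $\rho$. Let $\psi_D$ be the piecewise linear support function with $\psi_D(v)=\langle m_\sigma,v\rangle$ for $v\in\sigma$, so that $\psi_D(v_\rho)=-a_\rho$. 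Comparing these descriptions on each affine chart $U_\sigma$, one sees that $f^*f_*\mathcal O_X(D)\to\mathcal O_X(D)$ is surjective if and only if, for every maximal cone $\sigma$, one has $\langle m_\sigma,v\rangle\ge\psi_D(v)$ for all $v\in|\Sigma|$. Thus the lemma reduces to showing that $f$-nefness of $D$ forces this global domination of $\psi_D$ by each of its linear pieces.

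For this I would use torus invariant curves. After the reduction to affine $Y$, every complete curve of $X$ is automatically contracted by $f$, since a complete curve in an affine variety is a point; hence $f$-nefness of $D$ merely says that $D\cdot V(\omega)\ge 0$ for every wall $\omega$ of $\Sigma$, i.e.\ every codimension one cone lying between two maximal cones $\sigma_1$ and $\sigma_2$. The intersection formula (cf.\ \ref{f-say2.1.5} in the simplicial case, and \cite{fulton} or \cite{reid} in general) expresses $D\cdot V(\omega)$ as a positive multiple of $\langle m_{\sigma_1}-m_{\sigma_2},v\rangle$ for a suitable lattice vector $v$, so $D\cdot V(\omega)\ge 0$ says precisely that $\psi_D$ is convex across $\omega$; hence $f$-nefness gives convexity of $\psi_D$ across every interior wall. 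To conclude that a fixed linear piece dominates $\psi_D$ on all of $|\Sigma|$, fix a maximal cone $\sigma$ and put $g(v):=\langle m_\sigma,v\rangle-\psi_D(v)$: then $g$ is piecewise linear, vanishes identically on the full dimensional cone $\sigma$, is convex across every interior wall, and is defined on the convex set $|\Sigma|$; such a $g$ is automatically nonnegative, because any point of $|\Sigma|$ lies on a segment having an interior point of $\sigma$ strictly between it and another point of $\sigma$, and convexity along that segment forces $g\ge 0$ at the given point. This is essentially the Reid-type connectedness/telescoping argument (see \cite{reid} and \cite{fujino-notes}), and it completes the proof. I expect this final passage — from wall-by-wall convexity to the global inequality — to be the only genuinely substantive step; everything else is bookkeeping with characters, and one could alternatively run it as an induction on $\rho(X/Y)$ via the toric contractions of \cite{reid}.
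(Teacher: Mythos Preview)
Your argument is correct and is the standard proof via convexity of the support function. The paper itself does not give a proof of this lemma; it simply refers the reader to \cite[Chapter VI.~1.13.~Lemma]{nakayama}. What you have written --- reducing to affine $Y$, translating $f$-nefness into wall-by-wall convexity of $\psi_D$, and then deducing the global inequality $\langle m_\sigma,\cdot\rangle\ge\psi_D$ on the convex support $|\Sigma|$ --- is precisely the classical argument one finds in \cite{fulton}, \cite{oda}, or \cite{cls}, and presumably in \cite{nakayama} as well, so there is no genuine divergence in strategy to compare. The one place worth tightening is the final segment argument: for the restriction of $g$ to the segment from $v$ through an interior point $p\in\sigma$ to be convex, you want the segment to meet only walls and not lower-dimensional cones; this is automatic for a generic choice of $p$, and it would be worth saying so explicitly.
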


\begin{proof}
See, for example, \cite[Chapter VI.~1.13.~Lemma]{nakayama}. 
\end{proof}

We close this section with an easy example. 
It is well known that $\NE(X)$ is spanned by the numerical equivalence classes of 
torus invariant irreducible curves. However, the 
dual cone $\Nef (X)$ of $\NE(X)$ is not 
always spanned by the numerical equivalence classes of torus invariant prime 
divisors. 

\begin{ex}\label{f-ex2.2.4}
We consider $\mathbb P^1\times \mathbb P^1$. 
Let $p_i:\mathbb P^1\times \mathbb P^1\to \mathbb P^1$ be 
the $i$-th projection for $i=1, 2$. 
Let $D_1, D_2$ (resp.~$D_3, D_4$) be the 
torus invariant curves in the 
fibers of $p_1$ (resp.~$p_2$). 
Let $X\to \mathbb P^1\times \mathbb P^1$ be the 
blow-up at the point $P=D_1\cap D_3$ and 
let $E$ be the exceptional curve on $X$. 
Let $D'_i$ denote the strict transform of $D_i$ on $X$ for 
all $i$. 
Then $\NE(X)$ is spanned by 
the numerical equivalence classes of $E, D'_1$, 
and $D'_3$. 
On the other hand, $\Nef (X)\subset N^1(X)$ is spanned 
by $D'_2, D'_4$, and $D'_1+D'_3+E$. 
Therefore, the extremal ray of $\Nef (X)$ is not 
necessarily spanned by a torus invariant 
prime divisor. 
\end{ex}
\end{say}

\section{Lengths of extremal rays}\label{f-sec3}  
In this section, we discuss some estimates of lengths of 
extremal rays of toric projective morphisms. 

\subsection{Quick review of the known estimates}\label{f-subsec3.1}

In this subsection, we recall the known estimates of lengths of 
extremal rays for toric varieties. 
The first result is \cite[Theorem 0.1]{fujino-notes} (see 
also \cite[Theorem 3.13]{fujino-equiv}). 

\begin{thm}\label{f-thm3.1.1}
Let $f:X\to Y$ be a projective toric morphism with $\dim X=n$ 
and let $\Delta=\sum \delta_i\Delta_i$ be an $\mathbb R$-divisor 
on $X$ such that $\Delta_i$ is a torus 
invariant prime divisor and $0\leq \delta_i\leq 1$ for every $i$. 
Assume that $K_X+\Delta$ is $\mathbb R$-Cartier. 
Let $R$ be an extremal ray of $\NE(X/Y)$. 
Then there exists a curve $C$ on $X$ such 
that $[C]\in R$ and \[-(K_X+\Delta)\cdot C\leq n+1.\] 
More precisely, we can choose $C$ such that 
\[-(K_X+\Delta)\cdot C\leq n\] unless $X\simeq \mathbb P^n$ 
and $\sum \delta_i<1$. 
We note that if $X$ is complete then we can make $C$ a torus invariant 
curve on $X$. 
\end{thm}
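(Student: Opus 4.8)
The plan is to reduce to the well-understood local toric situation by running the standard toric contraction machinery. First I would pass to the extremal contraction $\varphi_R : X \to W$ over $Y$ associated to $R$; by the toric cone theorem (as developed in \cite{reid}, \cite{fujino-notes}) this contraction exists, and every curve $C$ with $[C]\in R$ is contracted by $\varphi_R$. I would then localize at a point $w \in W$ over which the fiber dimension is maximal, and replace $W$ by an affine toric neighborhood; this reduces the problem to analyzing the fan-theoretic picture of the wall-crossing or fiber contraction defined by $R$. Concretely, $R$ corresponds to a wall $\tau$ (an $(n-1)$-dimensional cone, in the $\mathbb{Q}$-factorial case, or more generally to a single relation among primitive generators), and the torus invariant curve $C = V(\tau)$ is the natural candidate.

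The key computation is the intersection number $(K_X+\Delta)\cdot V(\tau)$. I would write the wall relation $\sum b_i v_i = 0$ (or $= $ the appropriate combination for the relevant facet) among the primitive ray generators $v_i$ of the maximal cones adjacent to the wall, normalized suitably, and use the standard toric formula for $D_j \cdot V(\tau)$ in terms of these coefficients, together with $K_X = -\sum D_j$ (sum over torus invariant prime divisors) and the explicit form of $\Delta$. The upshot is that $-(K_X+\Delta)\cdot V(\tau)$ is bounded by the number of ray generators $v_i$ that appear with positive coefficient in the wall relation and lie in the "positive" half, which is at most the number of rays of a single maximal cone, i.e.\ at most $n+1$, and is at most $n$ unless the local structure forces all $n+1$ generators to contribute — which happens precisely when the contraction collapses a $\mathbb{P}^n$, forcing $X \simeq \mathbb{P}^n$ and $\Delta$ to be supported on at most the coordinate hyperplanes with $\sum \delta_i < 1$ (otherwise the $\Delta$-contribution kills one unit).

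A subtlety I would need to address is that $X$ is not assumed $\mathbb{Q}$-factorial, so the cone $\tau$ defining $R$ need not be simplicial and the clean intersection theory of \ref{f-say2.1.5} does not apply directly. The standard device here is to take a small $\mathbb{Q}$-factorialization $X' \to X$ (a projective toric birational morphism which is an isomorphism in codimension one, with $X'$ $\mathbb{Q}$-factorial), lift $R$ to an extremal ray $R'$ of $\NE(X'/Y)$ with the same length bound — using that $K_{X'}+\Delta' = $ pullback of $K_X+\Delta$ plus effective exceptional stuff, and that intersection numbers against the relevant curves are preserved or only decrease — and then run the simplicial computation upstairs. I expect this reduction to $\mathbb{Q}$-factorial $X$, and the careful bookkeeping of which generators contribute to the wall relation in the non-simplicial case, to be the main obstacle; the final numerical estimate once everything is simplicial is a routine consequence of the toric intersection formula.
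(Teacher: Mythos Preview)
Your overall strategy---reduce to $\mathbb Q$-factorial via a small $\mathbb Q$-factorialization, pass to the extremal contraction $\varphi_R$, and bound the intersection number combinatorially via the linear relation among primitive generators---is exactly the paper's approach (sketched in \ref{f-say3.1.3}). Two points deserve correction or sharpening.

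First, the paper does not work directly with a wall relation on $X$; it uses Reid's description \ref{f-say3.1.2} to pass to a general fiber $F$ of the exceptional locus $A\to B$, which is a $\mathbb Q$-factorial toric Fano variety with $\rho(F)=1$ (a fake weighted projective space), and then compares $-(K_X+\Delta)\cdot C$ with $-K_F\cdot C$ by (sub)adjunction. The whole numerical burden is thus placed on Proposition \ref{f-prop3.1.4} and the explicit computation in \ref{f-say3.1.8}. Your ``direct wall relation on $X$'' is the same computation unwound, but the paper's organization via $F$ makes the role of the $\mathbb P^n$ case transparent and cleanly separates the subadjunction step (which absorbs the negative $E_i\cdot C$ contributions in the birational case) from the fake-weighted-projective-space estimate.

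Second, your stated bound ``$-(K_X+\Delta)\cdot V(\tau)$ is at most the number of ray generators with positive coefficient'' is not correct as written, and this is where the actual content lies. For a fake weighted projective space with relation $\sum a_i v_i=0$, one has $-K_F\cdot V(\mu_{k,l})=\frac{1}{a_l}\bigl(\sum a_i\bigr)\cdot\frac{\mult(\mu_{k,l})}{\mult(\sigma_k)}$, and this is $\leq n+1$ only after one \emph{chooses} the curve corresponding to the largest coefficient $a_{n+1}$, so that each summand $\frac{a_i}{a_{n+1}}\cdot\frac{\mult(\mu)}{\mult(\sigma)}\leq 1$. An arbitrary wall $\tau$ with $[V(\tau)]\in R$ will not do; the proof hinges on picking the right one. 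Your sketch elides this choice, and without it the bound fails. (Also, for a small $\mathbb Q$-factorialization $\pi:\widetilde X\to X$ there is no exceptional divisor, so $K_{\widetilde X}+\widetilde\Delta=\pi^*(K_X+\Delta)$ on the nose; your ``plus effective exceptional stuff'' is unnecessary and slightly misleading.)
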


Our proof of Theorems \ref{f-thm3.1.1} and 
\ref{f-thm3.2.1} below heavily depends on 
Reid's description of toric extremal contraction morphisms 
(see \cite{reid} and \cite[Chapter 14]{matsuki}). 

\begin{say}[Reid's description of toric extremal 
contraction morphisms]\label{f-say3.1.2}
Let $f:X\to Y$ be a projective surjective toric morphism from a complete
$\mathbb Q$-factorial toric $n$-fold and let $R$ be 
an extremal ray of $\NE(X/Y)$.
Let $\varphi_R:X\to W$ be the extremal contraction associated to
$R$. We write 
\[
\begin{matrix}
 & A & \longrightarrow &B \\
                   & \cap & &\cap\\
\varphi_R: & X& \longrightarrow &W, \\
\end{matrix}
\] 
where $A$ is the exceptional locus of $\varphi_R$ and
$B$ is the image of $A$ by $\varphi_R$. Then there
exist torus invariant prime divisors $E_1, \cdots, E_{\alpha}$ on $X$ with 
$0\leq \alpha
\leq n-1$ such that $E_i$ is negative on $R$ for $1\leq i \leq \alpha$ 
and that $A$ is $E_1\cap \cdots\cap E_{\alpha}$. In particular, $A$ is
an irreducible torus invariant subvariety of $X$ with
$\dim A=n-\alpha$.
Note that $\alpha =0$ if and only if $A=X$, 
that is, $\varphi_R$ is a Fano contraction.
There are torus invariant prime divisors $E_{\beta+1}, \cdots, E_{n+1}$ on
$X$ with $\alpha \leq \beta \leq n-1$ such
that $E_i$ is positive on $R$ for $\beta+1\leq i\leq n+1$.
Let $F$ be a general fiber of $A\to B$.
Then $F$ is a $\mathbb Q$-factorial toric Fano variety with
$\rho (F)=1$ and $\dim F=n-\beta$.
The divisors $E_{\beta+1}|_F, \cdots,
E_{n+1}|_F$ define all the torus invariant prime divisors on $F$.
In particular, $B$ is an irreducible torus invariant
subvariety of $W$ with $\dim B=\beta-\alpha$. 
When $X$ is not complete, we can reduce it to the case 
where $X$ is complete by the equivariant completion theorem 
in \cite{fujino-equiv}. For the details, see \cite{sato}. 
\end{say}

\begin{say}\label{f-say3.1.3}
We quickly review the idea of the 
proof of Theorem \ref{f-thm3.1.1} in \cite{fujino-notes}. 
We will use the same idea in the proof of Theorem \ref{f-thm3.2.1} below.  
By replacing $X$ with its projective $\mathbb Q$-factorialization, 
we may assume that $X$ is $\mathbb Q$-factorial. 
Let $R$ be an extremal ray of $\NE(X/Y)$. 
Then we consider the extremal contraction $\varphi_R:X\to W$ associated 
to $R$. If $X$ is not projective, then we can 
reduce it to the case where $X$ is projective 
by the equivariant completion theorem (see \cite{fujino-equiv}). 
By Reid's combinatorial description of $\varphi_R$, 
any fiber $F$ of $\varphi_R$ is a $\mathbb Q$-factorial projective 
toric variety with $\rho (F)=1$. 
By (sub)adjunction, we can compare $-(K_X+\Delta)\cdot C$ with 
$-K_F\cdot C$, where 
$C$ is a curve on $F$. 
So, the key ingredient of the proof of Theorem \ref{f-thm3.1.1} is the following 
proposition. 
\end{say}

\begin{prop}\label{f-prop3.1.4}
Let $X$ be a $\mathbb Q$-factorial projective 
toric $n$-fold with $\rho (X)=1$. 
Assume that $-K_X\cdot C>n$ for every 
integral curve $C$ on $X$. 
Then $X\simeq \mathbb P^n$. 
\end{prop}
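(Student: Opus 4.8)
The plan is to reduce the statement to a purely combinatorial fact about the fan of $X$ and then quote the classification of $\mathbb Q$-factorial toric varieties of Picard number one. Since $\rho(X)=1$, the fan $\Sigma$ of $X$ is simplicial and complete with exactly $n+1$ rays, say generated by primitive vectors $v_0,\dots,v_n\in N$. After choosing coordinates we may assume $v_1,\dots,v_n$ form a basis of $N_{\mathbb Q}$ (i.e. the cone $\langle v_1,\dots,v_n\rangle$ is a maximal cone), so that $v_0=-\sum_{i=1}^n a_i v_i$ for positive rationals $a_i$; clearing denominators we get the unique (up to scaling) relation $\sum_{i=0}^n b_i v_i=0$ with $b_i\in\mathbb Z_{>0}$. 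These $b_i$ are the weights: $X$ is the weighted projective space $\mathbb P(b_0,\dots,b_n)$ up to a finite quotient, and in general a ``fake'' weighted projective space. The torus invariant curves are exactly $C_j=V(\langle v_i:i\neq j,\ i\neq k\rangle)$ for the various missing pairs; a short intersection-theory computation (using \ref{f-say2.1.5}) expresses $-K_X\cdot C_j$ in terms of the $b_i$ and the multiplicities of the relevant cones.

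The key step is then: if $-K_X\cdot C>n$ for every torus invariant curve $C$, I claim all $b_i=1$ and all cones are smooth, which forces $X\cong\mathbb P^n$. First I would show that if $X$ is smooth (all multiplicities $1$), then $-K_X\cdot C_j=\sum_{i}(\text{coefficients})$ evaluates, for the curve associated to the wall between the cone $\langle v_1,\dots,v_n\rangle$ and an adjacent cone, to something like $b_0+b_k$ divided by appropriate factors; the hypothesis $>n$ combined with $\sum b_i v_i = 0$ and $b_i\geq 1$ pins everything down. More efficiently, I would invoke Theorem \ref{f-thm3.1.1} itself with $\Delta=0$: applied to $f:X\to \mathrm{pt}$ and the unique extremal ray $R=\NE(X)$, it gives a torus invariant curve $C$ with $-K_X\cdot C\leq n$ unless $X\cong\mathbb P^n$. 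Contrapositively, the hypothesis $-K_X\cdot C>n$ for all integral (hence all torus invariant) curves forces $X\cong\mathbb P^n$.

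The main obstacle is making the last implication honest rather than circular: Theorem \ref{f-thm3.1.1} is stated for general $f:X\to Y$ and a boundary $\Delta$, and one must check that in the case $\rho(X)=1$, $Y=\mathrm{pt}$, $\Delta=0$ its proof does not itself rely on Proposition \ref{f-prop3.1.4} (the excerpt indicates the logical order is the reverse — \ref{f-prop3.1.4} is ``the key ingredient'' of \ref{f-thm3.1.1}). So I would instead give the self-contained combinatorial argument: normalize the fan as above, write down $-K_X\cdot C$ for each wall-crossing curve via \ref{f-say2.1.5}, observe that $\sum(-K_X\cdot C_j)$ over a suitable spanning set of curves is bounded above by a quantity controlled by $\sum_i b_i$ and the multiplicities, and conclude that $-K_X\cdot C_j>n$ for all $j$ forces every multiplicity to be $1$ and every $b_i$ to be $1$. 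Once the fan is smooth with weights all equal to one and $n+1$ rays satisfying $\sum v_i=0$ with $\{v_1,\dots,v_n\}$ a lattice basis, it is immediate that $\Sigma$ is the fan of $\mathbb P^n$, giving $X\cong\mathbb P^n$.
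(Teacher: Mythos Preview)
Your overall strategy --- set up the fan of a fake weighted projective space with the primitive relation $\sum a_i v_i=0$, compute $-K_X\cdot C$ for torus invariant curves via \ref{f-say2.1.5}, and deduce that all weights and multiplicities equal $1$ --- is precisely the paper's approach, carried out through the calculation in \ref{f-say3.1.8} (and the cited \cite[Proposition 2.9]{fujino-notes}). You also correctly identify and avoid the circularity with Theorem \ref{f-thm3.1.1}.

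The gap is at the step you leave vague. You propose to bound ``$\sum(-K_X\cdot C_j)$ over a suitable spanning set of curves,'' but you do not say which sum or why it forces the conclusion, and it is not clear this averaging idea works. The paper's argument is sharper and uses no sum: after ordering $a_1\le\cdots\le a_{n+1}$, one looks at the \emph{single} curve $C=V(\mu_{n,n+1})$ and computes
\[
-K_X\cdot C=\frac{1}{a_{n+1}}\Bigl(\sum_{i=1}^{n+1}a_i\Bigr)\cdot\frac{\mult(\mu_{n,n+1})}{\mult(\sigma_n)}\le n+1,
\]
since each $a_i/a_{n+1}\le 1$ and $\mult(\sigma_n)/\mult(\mu_{n,n+1})\in\mathbb Z_{>0}$. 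The hypothesis $-K_X\cdot C>n$ for this curve, together with the analogous inequalities for the curves $V(\mu_{k,l})$, first forces all $a_i=1$ and then $\mult(\mu_{k,l})=\mult(\sigma_k)$ for all relevant $k,l$; the latter equalities cascade to give $\mult(\sigma_k)=1$ for every $k$, so $\Sigma$ is the fan of $\mathbb P^n$. Replacing your unspecified summing step with this explicit choice of curve (the one whose denominator carries the largest weight) is the missing idea that turns your outline into a proof.
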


For the proof, see \cite[Proposition 2.9]{fujino-notes}. 
Our proof heavily depends on the 
calculation described in \ref{f-say3.1.8} below.  

\begin{say}[Supplements to \cite{fujino-osaka}]\label{f-say3.1.5}
By the same arguments as in the proof of Proposition \ref{f-prop3.1.4}, 
we can obtain the next proposition, 
which is nothing but \cite[Proposition 2.1]{fujino-osaka}. 

\begin{prop}\label{f-prop3.1.6}
Let $X$ be a $\mathbb Q$-factorial projective 
toric $n$-fold with $\rho (X)=1$ such that $X\not\simeq \mathbb P^n$. 
Assume that $-K_X\cdot C\geq n$ for every 
integral curve $C$ on $X$. 
Then $X$ is isomorphic to the weighted projective space 
$\mathbb P(1,1,2,\cdots, 2)$. 
\end{prop}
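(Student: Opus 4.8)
The plan is to rerun the proof of Proposition \ref{f-prop3.1.4} step by step and to track the single extremal case that survives when its strict inequality degenerates into an equality. Since $X$ is $\mathbb Q$-factorial, projective and satisfies $\rho(X)=1$, its fan $\Sigma$ is complete and simplicial by Lemma \ref{f-lem2.1.3} and has exactly $n+1$ rays, so, writing $v_0,\dots,v_n$ for the primitive ray generators, every $n$-element subset of $\{v_0,\dots,v_n\}$ spans a maximal cone $\sigma_k=\langle v_i: i\neq k\rangle$, and the torus invariant curves are the $C_{jk}=V(\tau_{jk})$ with $\tau_{jk}=\langle v_i : i\neq j,k\rangle$ the wall shared by $\sigma_j$ and $\sigma_k$. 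Completeness forces the unique (up to scaling) linear relation among the $v_i$ to have constant sign, so after normalization it reads $\sum_{i=0}^{n}a_iv_i=0$ with $a_0,\dots,a_n$ positive coprime integers; none of the $a_i$ can vanish, since any $n$ of the $v_i$ form a basis of $N_{\mathbb R}$.

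First I would record the anticanonical degrees of the $C_{jk}$, which is exactly the computation \ref{f-say3.1.8} underlying Proposition \ref{f-prop3.1.4}. Using $-K_X=\sum_iD_i$, the identification $N^1(X)_{\mathbb Q}\cong\mathbb Q$ under which $D_i\mapsto a_i$ (this functional on $\bigoplus_i\mathbb Q D_i$ annihilates $M_{\mathbb Q}$ because $\sum_ia_i\langle m,v_i\rangle=\langle m,\sum_ia_iv_i\rangle=0$), and the intersection formula of \ref{f-say2.1.5} applied to $V(\langle v_j\rangle)\cdot V(\tau_{jk})$, one gets
\[
-K_X\cdot C_{jk}=\frac{\bigl(\sum_i a_i\bigr)\,\mult(\tau_{jk})}{a_j\,\mult(\sigma_k)}=\frac{\bigl(\sum_i a_i\bigr)\,\mult(\tau_{jk})}{a_k\,\mult(\sigma_j)}.
\]
Since $\tau_{jk}$ is a facet of $\sigma_k$ and of $\sigma_j$ one has $\mult(\tau_{jk})\mid\mult(\sigma_k)$ and $\mult(\tau_{jk})\mid\mult(\sigma_j)$, while equating the two fractions gives $a_j\,\mult(\sigma_k)=a_k\,\mult(\sigma_j)$. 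Combining these elementary divisibilities yields
\[
-K_X\cdot C_{jk}\ \le\ \frac{\sum_i a_i}{\operatorname{lcm}(a_j,a_k)},
\]
and equality holds only when $\mult(\sigma_k)$ and $\mult(\tau_{jk})$ are as small as the weights permit, i.e. when $v_0,\dots,v_n$ already generate $N$ and $X$ is an honest weighted projective space rather than a proper finite quotient of one.

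Now apply the hypothesis $-K_X\cdot C\ge n$ to $C=C_{jk}$: this forces $\sum_ia_i\ge n\,\operatorname{lcm}(a_j,a_k)$ for all $j\neq k$. The combinatorial heart is then elementary. Order $a_0\ge\cdots\ge a_n\ge1$ and put $L=\max_{j\neq k}\operatorname{lcm}(a_j,a_k)$. If the maximizing pair is not a divisibility pair, then $\operatorname{lcm}(a,b)\ge a+b$ (valid whenever neither of $a,b$ divides the other) gives $\sum_i(L-a_i)\ge L$, which contradicts $\sum_ia_i\ge nL$ since there are only $n+1$ summands; hence $L=a_0$ and every $a_i$ divides $a_0$. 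Then $\sum_ia_i\ge na_0$ with all $a_i\le a_0$ allows at most two of the $a_i$ to be proper divisors of $a_0$, and a short case check using $\gcd(a_0,\dots,a_n)=1$ — together with the observation that tuples such as $(a_0,\dots,a_0,1)$ with $a_0\ge2$, or $(2,\dots,2,1)$, cannot occur as a primitive ray relation since some $v_i$ would then be divisible in $N$ — leaves only $(a_0,\dots,a_n)=(1,\dots,1)$, that is $X\simeq\mathbb P^n$, or $(a_0,\dots,a_n)=(2,\dots,2,1,1)$. In the latter case the inequalities for the pairs with $\operatorname{lcm}(a_j,a_k)=2$ are saturated, which by the previous paragraph forces the multiplicities to their minimal values, so $\Sigma$ is the fan of $\mathbb P(1,1,2,\dots,2)$ and $X\cong\mathbb P(1,1,2,\dots,2)$.

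I expect the real obstacle to be the second step rather than the number theory: one must genuinely control the lattice indices $\mult(\sigma_k)$ and $\mult(\tau_{jk})$, that is, rule out the fake weighted projective spaces sharing the weight vector $(2,\dots,2,1,1)$, by producing in each of them a torus invariant curve of anticanonical degree $<n$ — for instance, a suitable quotient $\mathbb P(1,1,2)/(\mathbb Z/2)$ carries a torus invariant curve of anticanonical degree $1<2$. This is precisely the content of the computation \ref{f-say3.1.8}, so the argument reuses it rather than re-deriving it, and it is the reason the whole proof rests on Reid's combinatorial description of toric contraction morphisms and on the intersection theory of \ref{f-say2.1.5}.
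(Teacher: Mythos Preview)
Your approach is exactly the one the paper indicates: it does not give a proof here but says ``by the same arguments as in the proof of Proposition \ref{f-prop3.1.4}'' and points to the computation \ref{f-say3.1.8}, and you rerun that computation tracking the equality case. The weight analysis is essentially correct: from $\sum_i a_i\ge n\,\operatorname{lcm}(a_j,a_k)$ one is forced to $(1,\dots,1)$ or $(2,\dots,2,1,1)$, and the primitivity observation ruling out $(a,\dots,a,1)$ with $a\ge 2$ is right. One small point: your ``contradicts'' is really ``forces equality'', and you then need the extra line $a_0=L$ together with $a_0$ being one of the two summands to derive the actual contradiction; you have the ingredients, just state it that way.

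The genuine gap is the lattice step, and it is broader than you flag. First, your equality criterion ``equality holds only when $v_0,\dots,v_n$ already generate $N$'' is false for an individual curve: in a fake $\mathbb P(1,1,2,\dots,2)$ many walls still satisfy $\mult(\tau_{jk})=\mult(\sigma_k)$, so saturation of some inequalities does not by itself pin down $N$. Second, the same issue already arises in the $(1,\dots,1)$ case: you write ``that is $X\simeq\mathbb P^n$'', but that needs the multiplicity argument too (fake $\mathbb P^n$'s exist, cf.\ Example \ref{f-ex3.1.10}). What one must actually do is use \emph{all} the constraints $\mult(\tau_{jk})=\mult(\sigma_k)$ coming from the indices $j$ with $a_j$ maximal to force a particular maximal cone to be nonsingular; the paper carries out exactly this kind of step in Step 3 of the proof of Theorem \ref{f-thm3.2.1}, where from $\mult(\sigma_1)=\mult(\mu_{1,l})$ for all relevant $l$ one concludes $\mult(\sigma_1)=1$. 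Pointing to \ref{f-say3.1.8} alone is not enough: that paragraph only records the intersection formula, not the implication ``enough facet equalities $\Rightarrow$ nonsingular cone''. Once you supply that implication (e.g.\ by the argument used in Step 3 of Theorem \ref{f-thm3.2.1}), both the $(1,\dots,1)$ and $(2,\dots,2,1,1)$ cases close and your proof is complete.
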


The following proposition, which is missing in \cite{fujino-osaka}, 
may help us understand \cite{fujino-osaka}. 
This proposition says that the results in \cite{fujino-osaka} 
are compatible with \cite[Theorem 2 (a)]{fujita}. 

\begin{prop}\label{f-prop3.1.7}
Let $X$ be a projective toric $n$-fold with $n\geq 2$. 
We assume that $-K_X\equiv nD$ for some Cartier divisor $D$ on $X$ and 
$\rho(X)=1$. 
Then $D$ is very ample and $\Phi_{|D|}:X\hookrightarrow \mathbb P^{n+1}$ embeds 
$X$ into $\mathbb P^{n+1}$ as a hyperquadric. 
\end{prop}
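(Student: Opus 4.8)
The plan is to classify the $\mathbb{Q}$-factorial projective toric $n$-fold $X$ with $\rho(X)=1$ satisfying $-K_X \equiv nD$ for some Cartier divisor $D$, using the results already recalled in this subsection. First I would invoke Proposition \ref{f-prop3.1.4} and Proposition \ref{f-prop3.1.6}: since $-K_X \cdot C \geq n$ for every integral curve $C$ (because $D$ is ample, hence $D\cdot C\geq 1$ for every curve), either $X \simeq \mathbb{P}^n$ or $X$ is isomorphic to the weighted projective space $\mathbb{P}(1,1,2,\dots,2)$. Note that a priori $X$ is only stated to be $\mathbb{Q}$-Gorenstein-free in the hypothesis, but the Cartier divisor $D$ with $-K_X\equiv nD$ forces $K_X$ to be Cartier, so the $\mathbb{Q}$-factoriality together with $\rho(X)=1$ lets us apply these propositions directly; alternatively one reduces to the $\mathbb{Q}$-factorial case by a small $\mathbb{Q}$-factorialization, which is an isomorphism since $\rho(X)=1$.

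Next I would handle the two cases separately. If $X \simeq \mathbb{P}^n$, then $-K_X = (n+1)H$ where $H$ is the hyperplane class, and $-K_X \equiv nD$ with $D$ Cartier forces $(n+1)H \equiv nD$; since $\operatorname{Pic}(\mathbb{P}^n) = \mathbb{Z}H$, this gives $nD = (n+1)H$ in $\mathbb{Z}$, which has no solution for $n \geq 2$. Hence this case does not occur, and $X \simeq \mathbb{P}(1,1,2,\dots,2)$. For the weighted projective space $Y = \mathbb{P}(1,1,2,\dots,2)$ (with $n-1$ copies of $2$), one computes that $-K_Y \equiv (2(n-1)+1+1)\mathcal{O}_Y(1) = 2n\cdot\mathcal{O}_Y(1)$, while the Cartier divisors on $Y$ are exactly the multiples of $\mathcal{O}_Y(2)$ (the generator of the Picard group, as opposed to the generator $\mathcal{O}_Y(1)$ of the class group); writing $D = m\cdot\mathcal{O}_Y(2)$ and matching $-K_Y \equiv nD$ forces $2n = 2mn$, so $m=1$ and $D = \mathcal{O}_Y(2)$.

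It then remains to show that $D = \mathcal{O}_Y(2)$ on $Y = \mathbb{P}(1,1,2,\dots,2)$ is very ample and that $\Phi_{|D|}$ embeds $Y$ into $\mathbb{P}^{n+1}$ as a hyperquadric. The most transparent route is the classical one: the graded ring $\bigoplus_{k\geq 0} H^0(Y, \mathcal{O}_Y(2k))$ is the Veronese-type subring of $k[x_0, x_1, y_1, \dots, y_{n-1}]$ (with $\deg x_i = 1$, $\deg y_j = 2$) consisting of elements of degree divisible by $2$; a minimal set of generators in degree $2$ is $x_0^2, x_0 x_1, x_1^2, y_1, \dots, y_{n-1}$, which is $n+2$ elements, giving $\Phi_{|D|}: Y \hookrightarrow \mathbb{P}^{n+1}$, and the single relation among these generators is the rank-$3$ quadric $(x_0 x_1)^2 = x_0^2 \cdot x_1^2$, so the image is a hyperquadric. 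I would verify this is a closed embedding by checking on the standard affine charts, or simply cite \cite[Theorem 2 (a)]{fujita} together with the agreement of the numerical data. The main obstacle is the very-ampleness and embedding verification for the weighted projective space: one has to be careful that $\mathcal{O}_Y(2)$, and not $\mathcal{O}_Y(1)$, is the Cartier generator, and that the resulting map is an isomorphism onto its image rather than merely a finite morphism — this is where the explicit chart computation (or an appeal to Fujita's theorem) is needed to conclude cleanly.
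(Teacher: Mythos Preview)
Your argument has a genuine gap at the very first step: you need $X$ to be $\mathbb{Q}$-factorial in order to invoke Propositions \ref{f-prop3.1.4} and \ref{f-prop3.1.6}, but the hypothesis of Proposition \ref{f-prop3.1.7} does not assume this, and your two attempted justifications both fail. The fact that $K_X$ is Cartier (which does follow from $-K_X\equiv nD$) says nothing about $\mathbb{Q}$-factoriality of other Weil divisors. And the claim that a small projective $\mathbb{Q}$-factorialization $\pi:\widetilde X\to X$ ``is an isomorphism since $\rho(X)=1$'' is simply false: if $X$ is not $\mathbb{Q}$-factorial then $\rho(\widetilde X)>\rho(X)$, so $\pi$ is a genuine small contraction, $\pi^*D$ is only nef and big rather than ample, and neither proposition applies to $\widetilde X$.

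This is not a hypothetical worry. The paper's proof, which proceeds via the classification in \cite[Theorem 3.2, Remark 3.3, Theorem 3.4]{fujino-osaka}, shows that there are \emph{two} families of such $X$ up to crepant resolution: besides $X\simeq\mathbb P(1,1,2,\cdots,2)$ (your case), there is the image of $Y=\mathbb P_{\mathbb P^1}(\mathcal O\oplus\cdots\oplus\mathcal O\oplus\mathcal O(1)\oplus\mathcal O(1))$ under $\Phi_{|\mathcal O_Y(1)|}$, and the paper explicitly notes that this $X$ is \emph{not} $\mathbb{Q}$-factorial. Concretely, for $n\geq 3$ this $X$ is a quadric cone in $\mathbb P^{n+1}$ of corank at least one, with $\rho(X)=1$ and $-K_X\equiv nD$, yet it is not a weighted projective space and is missed entirely by your case analysis. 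The paper circumvents the $\mathbb{Q}$-factoriality issue by working on the smooth scroll $Y$, checking directly that $\mathrm{Sym}^kH^0(Y,\mathcal O_Y(1))\to H^0(Y,\mathcal O_Y(k))$ is surjective, and then descending very ampleness to $X$ through the Stein factorization; your route via Propositions \ref{f-prop3.1.4} and \ref{f-prop3.1.6} cannot reach this second family.
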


\begin{proof}
By \cite[Theorem 3.2, Remark 3.3, and Theorem 3.4]{fujino-osaka}, 
there exists a crepant toric resolution $\varphi:Y\to X$, 
where $Y=\mathbb P_{\mathbb P^1}(\mathcal O_{\mathbb P^1}
\oplus \cdots \oplus \mathcal O_{\mathbb P^1}\oplus \mathcal O_{\mathbb P^1}(2))$ or 
$Y=\mathbb P_{\mathbb P^1}(\mathcal O_{\mathbb P^1}
\oplus \cdots\oplus 
\mathcal O_{\mathbb P^1}\oplus \mathcal O_{\mathbb P^1}(1)\oplus 
\mathcal O_{\mathbb P^1}(1))$. 
We note that $X=\mathbb P(1, 1, 2, \cdots, 2)$ when 
$Y=\mathbb P_{\mathbb P^1}(\mathcal O_{\mathbb P^1}
\oplus \cdots \oplus \mathcal O_{\mathbb P^1}\oplus \mathcal O_{\mathbb P^1}(2))$. 
We also note that $X$ is not $\mathbb Q$-factorial 
if $Y=\mathbb P_{\mathbb P^1}(\mathcal O_{\mathbb P^1}
\oplus \cdots \oplus \mathcal O_{\mathbb P^1}\oplus \mathcal O_{\mathbb P^1}(1)\oplus 
\mathcal O_{\mathbb P^1}(1))$. 
Let $\mathcal O_Y(1)$ be the tautological 
line bundle of the $\mathbb P^{n-1}$-bundle $Y$ over 
$\mathbb P^1$. 
Then we have $\mathcal O_Y(-K_Y)\simeq 
\mathcal O_Y(n)$. 
We can directly check that $\dim H^0(Y, \mathcal O_Y(1))=n+2$. 
We consider $\Phi_{|\mathcal O_Y(1)|}: Y\to\mathbb P^{n+1}$. 
By construction, 
\[
\Phi_{|\mathcal O_Y(1)|}: Y\overset{\varphi}{\longrightarrow} 
X\overset{\pi}{\longrightarrow}\mathbb P^{n+1}
\] 
is the Stein factorization of $\Phi_{|\mathcal O_Y(1)|}: Y\to \mathbb P^{n+1}$. 
By construction again, we have $\mathcal O_Y(1)\simeq 
\varphi^*\mathcal O_X(D)$. 
Since we can directly check that  
\[
\mathrm{Sym}^kH^0(Y, \mathcal O_Y(1))\to H^0(Y, \mathcal O_Y(k))
\] 
is surjective for every $k\in \mathbb Z_{>0}$, we see that 
\[
\mathrm{Sym}^kH^0(X, \mathcal O_X(D))\to H^0(X, \mathcal O_X(kD))
\] is 
also surjective for every $k\in \mathbb Z_{>0}$. 
This means that $\mathcal O_X(D)$ is very ample. 
In particular, $\pi:X\to \mathbb P^{n+1}$ is nothing but the 
embedding $\Phi_{|D|}: X\hookrightarrow \mathbb P^{n+1}$. 
Since $D^n=(\mathcal O_Y(1))^n=2$, $X$ is a hyperquadric in $\mathbb P^{n+1}$. 
\end{proof}
\end{say}

As was mentioned above, the following calculation plays 
an important role in the proof of Proposition \ref{f-prop3.1.4}. 

\begin{say}[Fake weighted projective spaces]\label{f-say3.1.8}
Now we fix $N\simeq \mathbb Z ^n$. Let $\{v_1,\cdots,v_{n+1}\}$ 
be a set of primitive vectors of $N$ such that $N_{\mathbb R}=\sum _i 
\mathbb R_{\geq 0}v_i$. 
We define $n$-dimensional cones 
\[
\sigma_i:=\langle v_1,\cdots,v_{i-1},v_{i+1},\cdots,v_{n+1}\rangle 
\] 
for $1\leq i\leq n+1$. 
Let $\Sigma$ be the complete fan generated by $n$-dimensional 
cones $\sigma_i$ and their faces for all $i$. Then 
we obtain a complete toric variety $X=X(\Sigma)$ with 
Picard number $\rho (X)=1$. 
We call it a {\em{$\mathbb Q$-factorial 
toric Fano variety with Picard number one}}. 
It is sometimes called a {\em{fake weighted projective 
space}}. 
We define $(n-1)$-dimensional cones $\mu_{i,j}=\sigma _i\cap \sigma _j$ 
for $i\ne j$. 
We can write $\sum _i a_i v_i=0$, where $a_i\in \mathbb Z_{>0}$, 
$\gcd(a_1,\cdots,a_{n+1})=1$, and $a_1\leq a_2\leq\cdots\leq a_{n+1}$ 
by changing the order. 
Then we obtain 
\[
0< V({v_{n+1}})\cdot V(\mu_{n,n+1})=\frac{\mult {(\mu_{n,n+1})}}
{\mult {(\sigma_{n})}}\leq 1, 
\]
\[
V({v_{i}})\cdot V(\mu_{n,n+1})=\frac{a_i}{a_{n+1}}\cdot
\frac{\mult {(\mu_{n,n+1})}}
{\mult {(\sigma_{n})}}, 
\]
and 
\begin{eqnarray*}
-K_{X} \cdot V(\mu_{n,n+1})&=&
\sum _{i=1}^{n+1} V({v_i})\cdot V(\mu_{n,n+1})\\
& =&
\frac {1}{a_{n+1}}
{\left(\sum_{i=1}^{n+1} a_i\right)}
\frac{\mult {(\mu_{n,n+1})}}
{\mult {(\sigma_{n})}}\leq n+1. 
\end{eqnarray*} 
We note that 
\[
\frac{\mult(\sigma_n)}{\mult(\mu_{n, n+1})}\in \mathbb Z_{>0}. 
\]
For the procedure to compute intersection numbers, 
see \ref{f-say2.1.5} or \cite[p.100]{fulton}. 
If $-K_{X} \cdot V(\mu_{n,n+1})=n+1$, then $a_i=1$ for every 
$i$ and $\mult (\mu_{n,n+1})=\mult (\sigma_{n})$. 

We note that the above calculation plays 
crucial roles in \cite{fujino-notes}, \cite{fujino-osaka}, 
\cite{fujino-ishitsuka}, and this paper (see the 
proof of Theorem \ref{f-thm3.2.1}, and so on). 

\begin{lem}\label{f-lem3.1.9} 
We use the same notation as in {\em{\ref{f-say3.1.8}}}. 
We consider the sublattice 
$N'$ of $N$ spanned by 
$\{v_1, \cdots, v_{n+1}\}$. 
Then the natural inclusion 
$N'\to N$ induces a finite 
toric morphism $f:X'\to X$ from a 
weighted projective space $X'$ such 
that $f$ is {}\'etale in codimension one. 
In particular, $X(\Sigma)$ is a weighted projective 
space if and only if $\{v_1, \cdots, v_{n+1}\}$ 
generates $N$. 
\end{lem}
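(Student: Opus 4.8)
The plan is to exploit the standard dictionary between finite toric morphisms and inclusions of lattices having the same rank. Let $N'\subset N$ be the sublattice generated by $v_1,\dots,v_{n+1}$. Since $N'_{\mathbb R}=N_{\mathbb R}$ (because the $v_i$ already span $N_{\mathbb R}$ over $\mathbb R$ by hypothesis), $N'$ has finite index in $N$, so the inclusion $N'\hookrightarrow N$ is a morphism of lattices of the same rank $n$. The same collection of cones $\{\sigma_i\}$ and their faces is a fan $\Sigma'$ with respect to $N'$, and the identity on $N_{\mathbb R}$ is a map of fans $(N',\Sigma')\to(N,\Sigma)$. By the general theory of toric morphisms induced by lattice inclusions of equal rank (see \cite{oda} or \cite[Chapter 14]{matsuki}), this induces a finite surjective toric morphism $f:X'=X(\Sigma')\to X=X(\Sigma)$ whose degree is $[N:N']$.

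Next I would identify $X'$. In the lattice $N'$, the vectors $v_1,\dots,v_{n+1}$ are a set of primitive generators of $N'_{\mathbb R}$ that generate $N'$ as a group, with the single relation $\sum_i a_iv_i=0$ ($a_i>0$, $\gcd=1$) coming from \ref{f-say3.1.8}; the fan $\Sigma'$ consists of all cones spanned by proper subsets of $\{v_1,\dots,v_{n+1}\}$. This is precisely the defining fan of the weighted projective space $\mathbb P(a_1,\dots,a_{n+1})$, so $X'\simeq\mathbb P(a_1,\dots,a_{n+1})$.

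It remains to check that $f$ is \'etale in codimension one, equivalently unramified along every torus invariant prime divisor. The torus invariant divisors of $X'$ (resp.\ $X$) correspond to the rays $\mathbb R_{\ge0}v_i$, and along $V(v_i)\to V(v_i)$ the ramification is governed by the index $[N_{v_i}:N'_{v_i}]$ of the one-dimensional sublattices $N'_{v_i}=\mathbb Z v_i\cap N'$ inside $N_{v_i}=\mathbb R v_i\cap N$. But $v_i$ is primitive in both $N$ and $N'$ (for $N'$ by construction, for $N$ by hypothesis), so both of these lattices equal $\mathbb Z v_i$ and the index is $1$; hence $f$ is unramified in codimension one. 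The last sentence of the lemma is then immediate: $f$ is an isomorphism precisely when $[N:N']=1$, i.e.\ when $\{v_1,\dots,v_{n+1}\}$ generates $N$, and in that case $X(\Sigma)=X'$ is a weighted projective space; conversely any weighted projective space arises from a fan of exactly this shape with the $v_i$ generating the lattice.

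I expect the main obstacle to be purely expository rather than mathematical: pinning down, with correct references, the precise form of the toric morphism attached to an equal-rank lattice inclusion and the formula for its ramification along torus invariant divisors, so that "\'etale in codimension one" follows cleanly from primitivity of the $v_i$; the identification of $X'$ with $\mathbb P(a_1,\dots,a_{n+1})$ is routine once the fan $\Sigma'$ is described in $N'$.
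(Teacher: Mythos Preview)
Your argument is correct and is exactly the standard one: the equal-rank lattice inclusion $N'\hookrightarrow N$ yields a finite toric morphism, the fan viewed in $N'$ is the fan of $\mathbb P(a_1,\dots,a_{n+1})$ because the $v_i$ generate $N'$ with the single relation $\sum a_iv_i=0$, and \'etaleness in codimension one follows from primitivity of each $v_i$ in both lattices. The paper does not prove this lemma at all --- it is stated and then dismissed with the sentence ``We note the above elementary lemma'' --- so there is nothing to compare your approach against; you have simply supplied the omitted details.
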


We note the above elementary lemma. 
Example \ref{f-ex3.1.10} shows that there are 
many fake weighted projective spaces which are not weighted projective 
spaces. 

\begin{ex}\label{f-ex3.1.10}
We put $N=\mathbb Z^3$. Let $\{e_1, e_2, e_3\}$ be the standard 
basis of $N$. 
We put $v_1=e_1$, $v_2=e_2$, $v_3=e_3$, 
and $v_4=-e_1-e_2-e_3$. 
The fan $\Sigma$ is the subdivision of $N_{\mathbb R}$ 
by $\{v_1, v_2, v_3, v_4\}$. 
Then $Y=X(\Sigma)\simeq \mathbb P^3$. 
We consider a new lattice \[N^\dag
=N+\left(\frac{1}{2}, \frac{1}{2}, 0\right)\mathbb Z.\]  
The natural inclusion $N\to N^\dag$ induces a finite toric morphism $Y\to 
X$, which is {}\'etale in codimension one. It is easy to see that 
$K_X$ is Cartier and $-K_X\sim 4D_4$, where 
$D_4=V(v_4)$ is not Cartier but $2D_4$ is Cartier. 
Since $\{v_1, v_2, v_3, v_4\}$ does not span the lattice $N^\dag$, $X$ is not a 
weighted projective space. Of course, $X$ 
is a fake weighted projective space. 
\end{ex}
\end{say}

\subsection{New estimate of lengths of extremal rays}\label{f-subsec3.2} 
The following theorem is one of the main theorems of this paper, in which 
we prove new estimates of $K_X$-negative 
extremal rays of birational type. 
We will see that they are the best by Examples \ref{f-ex3.3.1} 
and \ref{f-ex3.3.2}. 

\begin{thm}[Lengths of extremal rays of birational type, Theorem \ref{f-thm1.2}]
\label{f-thm3.2.1}
Let $f:X\to Y$ be a projective toric morphism with $\dim X=n$. 
Assume that $K_X$ is $\mathbb Q$-Cartier. 
Let $R$ be a $K_X$-negative 
extremal ray of $\NE(X/Y)$ and let $\varphi_R:X\to W$ 
be the contraction morphism associated to $R$. We put 
\[l(R)=\min_{[C]\in R} (-K_X\cdot C).\] 
and call it the length of $R$. 
Assume that $\varphi_R$ is birational. 
Then we obtain 
\[
l(R)<d+1, 
\]
where \[d=\max_{w\in W} \dim \varphi^{-1}_R(w)\leq n-1. \] 
When $d=n-1$, we have a sharper inequality 
\[
l(R)\leq d=n-1. 
\]
In particular, if 
$l(R)=n-1$, 
then $\varphi_R:X\to W$ can be described as follows. 
There exists a torus invariant smooth point $P\in W$ such that 
$\varphi_R:X\to W$ 
is a weighted blow-up at $P$ with the weight $(1, a, \cdots, a)$ for some 
positive integer $a$. 
In this case, the exceptional locus $E$ of $\varphi_R$ is a torus invariant 
prime divisor and is isomorphic to $\mathbb P^{n-1}$. 
Moreover, $X$ is $\mathbb Q$-factorial in a neighborhood of 
$E$. 
\end{thm}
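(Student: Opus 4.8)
The plan is to reduce to the $\mathbb Q$-factorial case, invoke Reid's combinatorial description (\ref{f-say3.1.2}), and then run the fake weighted projective space calculation of \ref{f-say3.1.8} on a general fiber of the contraction. First I would take a projective $\mathbb Q$-factorialization $g\colon X'\to X$; since $g$ is crepant and small, $K_X$-negative extremal rays of $\NE(X/Y)$ correspond to $K_{X'}$-negative extremal rays of $\NE(X'/Y)$ and the length can only go down (or stay the same) under $g$, so it suffices to prove the inequality $l(R)<d+1$ for $X'$ and then compare fiber dimensions. Next, if $X$ is not complete I would complete it equivariantly by \cite{fujino-equiv}. So assume $X$ is complete and $\mathbb Q$-factorial and $\varphi_R\colon X\to W$ is the birational extremal contraction. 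By \ref{f-say3.1.2}, a general fiber $F$ of $A\to B$ (where $A$ is the exceptional locus) is a $\mathbb Q$-factorial toric Fano variety with $\rho(F)=1$ and $\dim F=n-\beta$, and the divisors $E_{\beta+1}|_F,\dots,E_{n+1}|_F$ are all the torus invariant prime divisors on $F$, of which there are $n-\beta+1$, matching a fake weighted projective space of dimension $n-\beta$.

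The core computation: pick a torus invariant curve $C$ in $F$ generating $R$. By the adjunction-type comparison (the $E_i$ positive on $R$ restrict to the boundary divisors of $F$, the $E_i$ negative on $R$ meet $F$ trivially or are handled by the description), one gets $-K_X\cdot C = -K_F\cdot C + (\text{contributions from the }E_i\text{ negative on }R)$; since each such $E_i\cdot C<0$ these contributions are $\le 0$, whence $l(R)\le -K_X\cdot C \le -K_F\cdot C$. Now apply the fake weighted projective space estimate of \ref{f-say3.1.8} to $F$ with $\dim F=n-\beta$: choosing $C=V(\mu_{n-\beta,n-\beta+1})$ in the notation there, $-K_F\cdot C=\frac{1}{a_{n-\beta+1}}(\sum a_i)\frac{\mult(\mu)}{\mult(\sigma)}\le (n-\beta)+1$, with equality forcing all $a_i=1$ and $\mult(\mu)=\mult(\sigma)$, i.e. $F\simeq\mathbb P^{n-\beta}$. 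Since $d = \max_w \dim\varphi_R^{-1}(w)$ and the fibers over $B$ have dimension $\ge \dim F = n-\beta$, we have $n-\beta\le d$, so $l(R)\le n-\beta+1\le d+1$; to get the strict inequality $l(R)<d+1$ I would argue that equality $l(R)=n-\beta+1$ forces $F\simeq\mathbb P^{n-\beta}$ and then analyze the contraction to see the generic fiber dimension is actually $n-\beta-1$ or that a strictly larger fiber exists, contradicting $d=n-\beta$; alternatively, the case $n-\beta+1=d+1$ with $F\simeq\mathbb P^d$ makes $\varphi_R$ a divisorial contraction blowing down $E\simeq\mathbb P^d$ to a point, and one checks directly $-K_X\cdot C = d$ rather than $d+1$ in that situation (the extra boundary divisor of $\mathbb P^d$ is not among the $E_i$ positive on $R$ because $A=E$ is a divisor, so $\alpha=\beta$ and only $d+1$ of the ambient divisors restrict nontrivially while $-K_X$ picks up a correction).

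For the sharper statement when $d=n-1$: here the exceptional locus $A$ must itself be a divisor $E$ (otherwise fibers of dimension $n-1$ over a positive-dimensional $B$ would force $\dim X>n$), so $\alpha=\beta=1$ wait — rather $\alpha\ge 1$ and $A=E_1$, $B$ a point, $\dim F = n-1$ means $\beta = \alpha$ hence $\beta=1$, $E=E_1$ is the single divisor negative on $R$, and $F = E$ is itself a fake weighted projective space of dimension $n-1$ with boundary divisors $E_2|_E,\dots,E_{n+1}|_E$. Then $-K_X\cdot C = -K_E\cdot C + E\cdot C$ with $E\cdot C<0$, so $l(R)\le -K_E\cdot C - 1 \le (n-1)+1 - 1 = n-1$, giving $l(R)\le n-1$. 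If $l(R)=n-1$ then the chain of inequalities is tight: $-K_E\cdot C=n$ forces $E\simeq\mathbb P^{n-1}$ by Proposition \ref{f-prop3.1.4} (or by the equality case of \ref{f-say3.1.8}), and $E\cdot C=-1$. Then I would read off the fan: $E\simeq\mathbb P^{n-1}$ contracted to a point $P\in W$, with $E\cdot C=-1$ forcing the normal bundle computation that identifies $\varphi_R$ as the weighted blow-up with weights $(1,a,\dots,a)$; that $P$ is a smooth torus invariant point of $W$ and $X$ is $\mathbb Q$-factorial near $E$ follows by inspecting the cone of $W$ at $P$ (it is generated by a lattice basis after the blow-down) and the simpliciality of the cones of $X$ meeting $E$.

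I expect the main obstacle to be the last part — extracting the precise weighted-blow-up structure $(1,a,\dots,a)$ from $E\simeq\mathbb P^{n-1}$ and $E\cdot C=-1$, and verifying smoothness of $P\in W$ together with $\mathbb Q$-factoriality of $X$ near $E$. This requires carefully writing down the rays of the fan: the $n$ rays of $\sigma_P$ (the maximal cone of $W$ over $P$) must form a $\mathbb Z$-basis of $N$ for $P$ to be smooth, and the exceptional ray $v_0$ of $X$ must equal $\frac{1}{a}(v_1+\cdots)$ type combination dictated by the self-intersection $E\cdot C=-1$; controlling the multiplicities to conclude $X$ is $\mathbb Q$-factorial only in a neighborhood of $E$ (not globally) is the delicate bookkeeping step.
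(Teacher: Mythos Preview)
Your reduction to the $\mathbb Q$-factorial case and use of Reid's description match the paper, and the first inequality $l(R)<d+1$ is essentially right --- but you obscure it. You write that the contributions from the $E_i$ negative on $R$ are ``$\le 0$'' and then launch a separate argument for strictness; in fact each $E_i\cdot C<0$ and $\alpha\ge 1$ (since $\varphi_R$ is birational), so
\[
-K_X\cdot C=-(K_A+D)\cdot C+\sum_{i=1}^\alpha E_i\cdot C< -(K_A+D)\cdot C\le -K_F\cdot C\le n-\beta+1\le d+1
\]
already gives the strict bound. Your detour through ``equality forces $F\simeq\mathbb P^{n-\beta}$ and then analyze the contraction'' is unnecessary.

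The genuine gap is your proof of the sharper bound $l(R)\le n-1$. You write $-K_X\cdot C=-K_E\cdot C+E\cdot C$ and then use $E\cdot C\le -1$. Both steps fail. First, subadjunction gives $(K_X+E)|_E=K_E+D$ with a possibly nonzero different $D$, so $-K_X\cdot C=-K_E\cdot C-D\cdot C+E\cdot C$, not what you wrote. Second, $E$ is only $\mathbb Q$-Cartier, so $E\cdot C<0$ does not imply $E\cdot C\le -1$. Indeed, for the weighted blow-up with weight $(1,a,\dots,a)$ and $a\ge 2$ (exactly the extremal case you are trying to reach), one has $E\cdot C=-1/a$ and $D\cdot C=(a-1)/a$ for a line $C\subset E$; these combine to give $-K_X\cdot C=n-1$, but neither of your claimed identities holds. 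Consequently your equality analysis (``$-K_E\cdot C=n$ and $E\cdot C=-1$'') is also wrong: equality $l(R)=n-1$ does not force $E\cdot C=-1$.

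The paper instead proves $l(R)\le n-1$ and the classification by a direct fan computation: writing $be_{n+1}=a_1e_1+\cdots+a_ne_n$ and evaluating $-K_X\cdot V(\mu_{k,n})$ explicitly, one forces $\mult(\mu_{k,n})=\mult(\sigma_k)$, then $a_1=\cdots=a_n$ or $(a_1,\dots,a_n)=(1,a,\dots,a)$, then $b=1$, and finally nonsingularity of $\langle e_1,\dots,e_n\rangle$. There is no shortcut via adjunction. Finally, you do not address the non-$\mathbb Q$-factorial case of the classification at all; the paper needs a separate combinatorial lemma (Lemma~\ref{f-lem3.2.4}) showing that any small $\mathbb Q$-factorialization $\pi:\widetilde X\to X$ is already an isomorphism near $E$, and this is where the assertion ``$X$ is $\mathbb Q$-factorial in a neighborhood of $E$'' comes from.
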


The idea of the proof of Theorem \ref{f-thm3.2.1} is the same as 
that of Theorem \ref{f-thm3.1.1}. 

\begin{proof}[Proof of Theorem \ref{f-thm3.2.1}] 
In Step \ref{f-3.2.1step1}, we will explain how to 
reduce problems to the case where $X$ is $\mathbb Q$-factorial. 
Then we will prove the inequality $l(R)<d+1$ in Step 
\ref{f-3.2.1step2}. 
In Step \ref{f-3.2.1step3}, we will treat the case where $X$ is $\mathbb Q$-factorial 
and $l(R)\geq n-1$. 
Finally, in Step \ref{f-3.2.1step4}, we will treat the case where 
$l(R)\geq n-1$ under the assumption that $X$ is not necessarily 
$\mathbb Q$-factorial. 

\begin{step}\label{f-3.2.1step1} 
In this step, we will explain how to reduce problems to the case where $X$ is 
$\mathbb Q$-factorial. 

Without loss of generality, we may assume that $W=Y$. 
Let $\pi:\widetilde X\to X$ be a small projective 
$\mathbb Q$-factorialization (see, for example, 
\cite[Corollary 5.9]{fujino-notes}). 
Then we can take an extremal ray $\widetilde R$ of $\NE(\widetilde X/W)$ 
and construct the following commutative diagram 
\[
\xymatrix{
\widetilde X \ar[r]^{\varphi_{\widetilde R}} \ar[d]_\pi& \widetilde W \ar[d]\\ 
X \ar[r]_{\varphi_R} & W
}
\]
where $\varphi_{\widetilde R}$ is the contraction morphism 
associated to $\widetilde R$. 
We note that $\varphi_{\widetilde R}$ must be small 
when $\varphi_R$ is small. 
We write 
\[
\begin{matrix}
 & \widetilde A & \longrightarrow &\widetilde B \\
                   & \cap & &\cap\\
\varphi_{\widetilde R}: & \widetilde X& \longrightarrow &\widetilde W, \\
\end{matrix}
\] 
where $\widetilde A$ is the exceptional locus of 
$\varphi_{\widetilde R}$ and $\widetilde B$ is the image of $\widetilde A$. 
Let $\widetilde F$ be a general fiber of $\widetilde A\to \widetilde B$. 
Then $\widetilde F$ is a fake weighted projective 
space as in \ref{f-say3.1.2}, that is, $\widetilde F$ is a 
$\mathbb Q$-factorial 
toric Fano variety with Picard number one. 
Since $\rho(\widetilde F)=1$, $\pi:\widetilde F\to F:=\pi(\widetilde F)$ is 
finite. 
Therefore, by definition, $\dim \widetilde F=\dim F\leq d$ since 
$\varphi_R(F)$ is a point. 
Let $\widetilde C$ be a curve in $\widetilde F$ and let $C$ be the 
image of $\widetilde C$ by $\pi$ with the reduced scheme structure. 
Then we obtain 
\[
-K_{\widetilde X}\cdot \widetilde C=-\pi^*K_X\cdot \widetilde C=-mK_X\cdot 
C,  
\] 
where $m$ is the mapping degree of $\widetilde C\to C$. 
Thus, if $-K_{\widetilde X}\cdot \widetilde C$ satisfies the 
desired inequality, then $-K_X
\cdot C$ also satisfies the same inequality. 
Therefore, for the proof of $l(R)<d+1$, 
we may assume that $X$ is $\mathbb Q$-factorial and $W=Y$ 
by replacing $X$ and $Y$ with $\widetilde X$ and $\widetilde W$, respectively. 
\end{step}
\begin{step}\label{f-3.2.1step2}
In this step, we will prove the desired inequality $l(R)<d+1$ 
under the assumption that $X$ is $\mathbb Q$-factorial.  

We write 
\[
\begin{matrix}
 & A & \longrightarrow &B \\
                   & \cap & &\cap\\
\varphi_R: & X& \longrightarrow &W, \\
\end{matrix}
\] 
where $A$ is the exceptional locus of $\varphi_R$ and $B$ is the 
image of $A$. It is well known that $A$ is irreducible. 
We put $\dim A=n-\alpha$ and $\dim B=\beta-\alpha$ as in \ref{f-say3.1.2}. 
Let $F$ be a general fiber of $A\to B$. 
Then, it is known that $F$ is a $\mathbb Q$-factorial toric Fano variety with 
Picard number one. It is also well known that there exist torus invariant prime divisors 
$E_1,\cdots, E_{\alpha}$ on $X$ such that $E_i$ is negative 
on $R$ for every $i$ and 
$A$ is $E_1\cap \cdots \cap E_{\alpha}$. 
By (sub)adjunction, we have 
\[
(K_X+E_1+\cdots +E_\alpha)|_A=K_A+D
\] 
for some effective $\mathbb Q$-divisor $D$ on $A$. 
Note that $D$ is usually called a {\em{different}}. 
Let $C$ be a curve in $F$. 
Then 
\begin{equation}\label{f-eq3.1}
\begin{split}
-K_X\cdot C&=-(K_A+D)\cdot C+E_1\cdot C+\cdots 
+E_{\alpha}\cdot C\\&<-(K_A+D)\cdot C\leq -K_F\cdot C. 
\end{split}
\end{equation}
By \cite[Proposition 2.9]{fujino-notes} (see also \ref{f-say3.1.8}), 
there exists a torus invariant curve $C$ on $F$ such that 
$-K_F\cdot C\leq \dim F+1=n-\beta+1$. 
Therefore, we obtain \[-K_X\cdot C<n-\beta+1
=d+1\leq
n\] since $\beta\geq \alpha \geq 1$. 
This means that $l(R)<d+1$. 
By combining it with Step \ref{f-3.2.1step1}, 
we have $l(R)<d+1$ without assuming that $X$ is $\mathbb Q$-factorial. 
 
We close this step with easy useful remarks. 

\begin{rem}\label{f-rem3.2.2}
We note that if $F\not\simeq \mathbb P^{n-\beta}$ 
in the above argument, then we can choose $C$ such that 
$
-K_F\cdot C\leq \dim F=n-\beta
$ 
(see Theorem \ref{f-thm3.1.1}). 
\end{rem}

\begin{rem}\label{f-rem3.2.3}
If $X$ is Gorenstein, then $-K_X\cdot C<n$
implies $-K_X\cdot C\leq n-1$. 
Therefore, by combining it with 
Step \ref{f-3.2.1step1}, we can easily see that 
the estimate $l(R)\leq n-1$ always holds for Gorenstein 
(not necessarily $\mathbb Q$-factorial) toric varieties. 

If $\varphi_R$ is small, then we can find $C$ such that 
$-K_X\cdot C<n-1$ and $[C]\in R$ since 
we know $\beta\geq \alpha \geq 2$. 
Therefore, by combining it with Step \ref{f-3.2.1step1}, the estimate 
$l(R)<n-1$ always holds for 
(not necessarily $\mathbb Q$-factorial) toric 
varieties, when $\varphi_R$ is small. 
\end{rem}
\end{step}

\begin{step}\label{f-3.2.1step3} 
In this step, we will investigate the case where 
$l(R)\geq n-1$ under the assumption that $X$ is $\mathbb Q$-factorial. 

We will use the same notation as in Step \ref{f-3.2.1step2}. 
In this case, we see that $-K_X\cdot C\geq n-1$ for every curve 
$C$ on $F$. Then, we see that 
$\dim A=\dim F=n-1$, $F\simeq \mathbb P^{n-1}$ and $\dim B=0$ 
(see Remark \ref{f-rem3.2.2}). 
Equivalently, $\varphi_R$ contracts $F\simeq \mathbb P^{n-1}$ to a torus invariant point 
$P\in W$. Let $\langle e_1, \cdots, e_n\rangle$ be the 
$n$-dimensional cone corresponding to $P\in W$. 
Then $X$ is obtained by the star subdivision of $\langle e_1, \cdots, e_n\rangle$ 
by $e_{n+1}$, where $be_{n+1}=a_1e_1+\cdots +a_ne_n$, $b\in \mathbb Z_{>0}$ and 
$a_i\in \mathbb Z_{>0}$ for all $i$. 
We may assume that $\gcd (b, a_1,\cdots, a_n)=1$, 
$\gcd (b, a_1, \cdots, a_{i-1}, a_{i+1}, \cdots a_n)=1$ for all $i$, 
and $\gcd (a_1, \cdots, a_n)=1$. Without loss of generality, we may assume that 
$a_1\leq \cdots \leq a_n$ by changing the order. 
We write $\sigma_i=\langle e_1, \cdots, e_{i-1}, e_{i+1}, \cdots, 
e_{n+1}\rangle$ for all $i$ and $\mu_{k,l}=\sigma_k\cap \sigma_l$ for 
$k\ne l$. Then 
\begin{equation}\label{f-eq3.2}
-K_X\cdot V(\mu _{k,n})=\frac{1}{a_n}
\left(\sum _{i=1}^{n}
a_i-b\right)\frac{\mult(\mu_{k,n})}{\mult(\sigma_k)}\geq n-1
\end{equation} for 
$1\leq k\leq n-1$. Then $\mult (\mu_{k,n})=\mult (\sigma_k)$ for 
$1\leq k\leq n-1$. Thus, $a_k$ divides $a_n$ for $1\leq k\leq n-1$. 

\begin{case}\label{f-case1}If $a_1=a_n$, then $a_1=\cdots =a_n=1$. 
In this case $-K_X\cdot V(\mu _{k,n})\geq 
n-1$ implies $b=1$. And we have $\mult (\mu _{k,l})=\mult 
(\sigma _k)$ for $1\leq k\leq n$, $1\leq l\leq n$, and $k\ne l$. In particular, 
$\mult (\sigma _1) =\mult (\mu_{1, l})$ for $2\leq l\leq n$. 
This implies $\mult (\sigma _1)=1$. Since $e_{n+1}=e_1+\cdots 
+e_n$, $\langle e_1, \cdots, e_n\rangle$ is a nonsingular cone. 
Therefore, $\varphi_R:X\to W$ is a blow-up at a smooth point $P$. 
Of course, $l(R)=n-1$. 
\end{case}

\begin{case}\label{f-case2}
Assume that $a_1\ne a_n$. If $a_2\ne a_n$, 
then $\frac{a_1}{a_n}\leq \frac{1}{2}$ and  
$\frac{a_2}{a_n}\leq \frac{1}{2}$. This contradicts $-K_X\cdot 
V(\mu_{k,l})\geq n-1$. 
Therefore, $a_1=1$ and $a_2=\cdots =a_n=a$ for some positive integer 
$a\geq 2$. 
The condition $-K_X\cdot V(\mu _{k,n})\geq n-1$ implies $b=1$. 
Thus, $\mult (\mu_{k,l})=\mult (\sigma_k)$ for $1\leq 
k\leq n$, $2\leq l\leq n$, and $k\ne l$. In particular, 
$\mult (\sigma _1)=\mult (\mu _{1, l})$ for 
$2\leq l\leq n$. 
Thus, $\mult (\sigma _1)=1$. 
Since \[e_{n+1}=e_1+ae_2+\cdots +ae_n, \] 
$\langle e_1, \cdots, e_n\rangle$ is a nonsingular cone. 
Therefore, $\varphi_R:X\to W$ is a weighted blow-up at a smooth point $P\in W$ with 
the weight $(1, a, \cdots, a)$. 
In this case, $K_X=\varphi^*_RK_W+(n-1)aE$, where 
$E\simeq \mathbb P^{n-1}$ is the exceptional divisor 
and $l(R)=n-1$ (see Proposition \ref{f-prop3.2.6} below). 
\end{case}
Anyway, when $X$ is $\mathbb Q$-factorial, 
we obtain that $l(R)\geq n-1$ implies $l(R)=n-1$. 
Therefore, the estimate $l(R)\leq n-1$ always holds when $X$ is $\mathbb Q$-factorial 
and $\varphi_R$ is birational. 
\end{step}
\begin{step}\label{f-3.2.1step4}
In this final step, we will treat the case where $l(R)\geq n-1$ under the 
assumption that $X$ is not necessarily $\mathbb Q$-factorial. 

Let $\pi:\widetilde X\to X$ be a small projective 
$\mathbb Q$-factorialization as in Step \ref{f-3.2.1step1}. 
By the argument in Step \ref{f-3.2.1step1}, 
we can find a $K_{\widetilde X}$-negative extremal ray $\widetilde R$ 
of $\NE (\widetilde X/W)$ such that 
$l(\widetilde R)\geq n-1$. Therefore, by Step \ref{f-3.2.1step3}, 
the associated contraction 
morphism $\varphi_{\widetilde R}:\widetilde X\to \widetilde W$ is a weighted 
blow-up at a smooth point $\widetilde P\in \widetilde W$ with 
the weight $(1, a, \cdots, a)$ for some positive integer $a$. 
Let $\widetilde E$ $(\simeq \mathbb P^{n-1})$ be the 
$\varphi_{\widetilde R}$-exceptional 
divisor on $\widetilde X$. We put $E=\pi(\widetilde E)$. 
Then it is easy to see that $E\simeq \mathbb P^{n-1}$ and that 
$\pi:\widetilde E\to E$ is an isomorphism. 
\begin{lem}\label{f-lem3.2.4} 
$\pi: \widetilde X\to X$ is an isomorphism over some open neighborhood of $E$. 
\end{lem}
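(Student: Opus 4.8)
The plan is to argue that $\pi$, being a small projective $\mathbb{Q}$-factorialization, can only fail to be an isomorphism along loci where $X$ is not $\mathbb{Q}$-factorial, and to show that $X$ is in fact $\mathbb{Q}$-factorial in a neighborhood of $E$; by smallness this forces $\pi$ to be an isomorphism there. More concretely, recall from Step \ref{f-3.2.1step4} that $\varphi_{\widetilde R}:\widetilde X\to\widetilde W$ is the weighted blow-up of a \emph{smooth} torus invariant point $\widetilde P\in\widetilde W$ with weight $(1,a,\dots,a)$, so the cone $\langle e_1,\dots,e_n\rangle$ corresponding to $\widetilde P$ is nonsingular and $\widetilde X$ near $\widetilde E$ is described by the star subdivision of this cone by $e_{n+1}=e_1+ae_2+\cdots+ae_n$. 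The combinatorial data of $\widetilde X$ in a neighborhood of $\widetilde E$ is therefore completely explicit: all the maximal cones $\sigma_i=\langle e_1,\dots,\widehat{e_i},\dots,e_{n+1}\rangle$ $(1\le i\le n)$ are simplicial, so $\widetilde X$ is $\mathbb{Q}$-factorial there (which we already knew), and one can read off exactly which of these cones are \emph{nonsingular}.

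The key step is to translate "$\pi$ is small" into a statement about fans. Since $\pi:\widetilde X\to X$ is a toric projective birational morphism that is small (contracts no divisor) and $\widetilde X$ is obtained from the smooth cone $\langle e_1,\dots,e_n\rangle$ by the single star subdivision at $e_{n+1}$, the fan of $X$ near $P$ is a coarsening of the fan of $\widetilde X$ that does not delete the ray $\mathbb{R}_{\ge 0}e_{n+1}$ (deleting it would contract the divisor $\widetilde E=V(e_{n+1})$, contradicting smallness). But the star subdivision at $e_{n+1}$ produces exactly the maximal cones $\sigma_1,\dots,\sigma_n$, and the only coarsenings that keep the ray $e_{n+1}$ and still give a fan refining nothing further are obtained by recombining some of the $\sigma_i$; I would check that any nontrivial such recombination creates a non-simplicial cone, hence makes $X$ non-$\mathbb{Q}$-factorial along a locus \emph{containing} a point of $E=V(e_{n+1})$ — and then show this is incompatible with the conclusion we are heading toward, or alternatively simply observe that the conclusion "$X$ is $\mathbb{Q}$-factorial in a neighborhood of $E$" (which is part of the theorem statement) forces no recombination at all. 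The cleanest route: prove directly that every maximal cone of $X$ containing the ray $e_{n+1}$ must already be one of the $\sigma_i$, so the fans of $\widetilde X$ and $X$ agree on the open star of $e_{n+1}$, and $V(e_{n+1})=E$ lies in this open subset.

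The main obstacle I expect is the bookkeeping needed to rule out the "recombination" coarsenings: a priori $X$ could be obtained by merging several $\sigma_i$ into one larger (non-simplicial) cone, and I need to show such a merge is impossible given the constraints. The essential input is that $\pi$ is small \emph{and} projective: projectivity means $X$ carries a $\pi$-ample divisor, equivalently the coarsening comes from a suitable piecewise-linear support function, and smallness pins down that the ray $e_{n+1}$ survives. I would combine these with the explicit weights $(1,a,\dots,a)$ — in particular $a_1=1$ makes $\sigma_n=\langle e_1,\dots,e_{n-1},e_{n+1}\rangle$ nonsingular — to show that merging $\sigma_n$ with any other $\sigma_i$ would either reintroduce a wall that $\pi$ is not allowed to contract or violate convexity, leaving the trivial coarsening as the only option. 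Once the fans agree near $e_{n+1}$, $\pi$ is an isomorphism over the corresponding open neighborhood of $E$, which is the assertion.
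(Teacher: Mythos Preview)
Your plan has the right target (show the fans of $\widetilde X$ and $X$ agree on the open star of $e_{n+1}$), but there is a genuine gap in how you intend to get there. You repeatedly frame the problem as ruling out ``mergers among the $\sigma_i$'': you write that ``the only coarsenings that keep the ray $e_{n+1}$ \ldots\ are obtained by recombining some of the $\sigma_i$'', and later ``a priori $X$ could be obtained by merging several $\sigma_i$ into one larger cone''. This is not true. The fan of $\widetilde X$ is global; a maximal cone $\sigma$ of $X$ that contains some $\sigma_i$ can also swallow neighboring maximal cones of $\widetilde X$ lying \emph{outside} the star of $e_{n+1}$, so $\sigma$ may acquire an extra ray generated by some primitive vector $x\notin\{e_1,\dots,e_{n+1}\}$. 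Your proposed tools (projectivity via a support function, convexity, the fact that $\sigma_n$ is nonsingular) do not by themselves exclude this, and the alternative you float---invoking ``$X$ is $\mathbb Q$-factorial in a neighborhood of $E$'' from the theorem statement---is circular, since that clause is deduced \emph{from} the lemma.

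The paper's argument confronts exactly this possibility. Assuming $\pi$ is not an isomorphism near $E$, it picks a non-simplicial cone $\sigma$ of $X$ containing, say, $\sigma_n$ (or $\sigma_1$), and an arbitrary extra ray $x=(x_1,\dots,x_n)$ of $\sigma$. Two ingredients you do not mention are then decisive: (i) smallness of $\pi$ gives $K_{\widetilde X}=\pi^*K_X$, which forces $x$ to lie on the hyperplane through $e_1,\dots,e_{n-1},e_{n+1}$, a linear equation on the $x_i$; and (ii) the cone $\langle e_1,\dots,e_n,x\rangle$, being the image of (part of) $\sigma$ under $\varphi_R$, must sit inside a strongly convex cone of the fan of $W$. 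Combining (i) with the facet inequalities coming from $\sigma_i\subset\sigma$, one shows (after a short case analysis, using that $\pi$ contracts no divisor) that all $x_j\le 0$, whereupon the relation $(-x_1)e_1+\cdots+(-x_n)e_n+x=0$ contradicts (ii). Your plan would become correct if you replaced the ``merge $\sigma_i$'s'' picture with this analysis of an arbitrary extra ray and brought in the crepant constraint and the strong convexity in $W$.
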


\begin{proof}[Proof of Lemma \ref{f-lem3.2.4}]
We will get a contradiction by assuming that $\pi:\widetilde X\to X$ is not 
an isomorphism over any open neighborhood of $E$. 
Since $\varphi_{\widetilde R}$ is a weighted blow-up as 
described in the case where $X$ is $\mathbb Q$-factorial 
(see Step \ref{f-3.2.1step3}) and 
$\pi$ is a crepant small toric morphism by construction, 
the fan of $\widetilde{X}$ contains $n$-dimensional cones 
\[
\sigma_i:=\langle\{e_1,\ldots,e_{n+1}\}\setminus\{e_i\}\rangle,
\] 
for $1\le i\le n$, 
where $\{e_1,\ldots,e_n\}$ is the standard basis of $N=\mathbb Z^n$ 
and 
$e_{n+1}:=e_1+ae_2+\cdots+ae_n$ with $a\in\mathbb{Z}_{>0}$. 
Since we assume that $\pi:\widetilde 
X\to X$ is not an isomorphism over any open 
neighborhood of $E$, there exists at least one non-simplicial 
$n$-dimensional cone 
$\sigma$ in the fan of $X$ 
such that  $\sigma$ contains one of the above $n$-dimensional cones. 
By symmetry, it is sufficient to consider the two cases where 
$\sigma$ contains $\sigma_n$ or $\sigma_1$. 

First, we suppose $\sigma_n\subset\sigma$. Let $x=x_1e_1+\cdots +x_ne_n
\in N$ be the primitive generator 
for some one-dimensional 
face of $\sigma$ which is not contained in $\sigma_n$. 
Then, by considering the facets of $\sigma_n$, we have the inequalities 
$ax_1-x_n\ge 0$, $x_i-x_n\ge 0$ for $2\le i\le n-1$, and $x_n<0$. 
If $x_1-x_n<0$, then $x_1<x_n<0$. 
This means that $ax_1-x_n\leq x_1-x_n<0$. 
This is a contradiction. Therefore, the inequality $x_1-x_n\geq 0$ also holds. 

\begin{claim}\label{f-claim}
$x_i\leq 0$ for every $i\ne n$. 
\end{claim}

\begin{proof}[Proof of Claim] 
Suppose $x_i>0$ for some $i\ne n$. 
We note that $x$ must be contained 
in the hyperplane passing through 
the points $e_1,\ldots,e_{n-1},e_{n+1}$ since 
$\pi$ is crepant, that is, $K_{\widetilde X}=
\pi^*K_X$. So the equality 
\begin{equation*}
\begin{split}
1&=x_1+\cdots+x_{n-1}-(n-2)x_n
\\&=(x_1-x_n)+\cdots+(x_{i-1}-x_n)+x_i+(x_{i+1}-x_n)+\cdots+(x_{n-1}-x_n)
\end{split}
\end{equation*}
holds. Therefore, $x_j-x_n=0$ must hold for every $j\neq i$, and 
$x_i=1$. If $i\ne 1$, then we have $a=1$ since $ax_1-x_n=(a-1)x_n\ge 0$ and 
$x_n<0$. 
However, the linear relation 
\[x+(-x_n)e_{n+1}=(1-x_n)e_i\] means that $\pi$ 
contracts a divisor $V(e_i)$. 
This is a contradiction because $\pi$ is small by construction. 
If $i=1$, then we have $ax_1-x_n=a-x_n>0$ since $a>0$ and $-x_n>0$. 
However, the linear relation 
\[
ax+(-x_n)e_{n+1}=(a-x_n)e_1
\] 
means that $\pi$ contracts a divisor $V(e_1)$. This is a 
contradiction because $\pi$ is small by construction. 
In any case, we obtain that $x_i\leq 0$ holds for $1\le i \le n-1$.  
\end{proof}
Therefore, the linear relation 
\[
(-x_1)e_1+\cdots+(-x_{n})e_{n}+x=0
\]
says that the cone $\langle e_1,\ldots,e_n,x\rangle$ 
contains a positive dimensional linear subspace of 
$N_{\mathbb R}$ because 
$-x_i\ge 0$ for $1\le i\le n-1$ and $-x_n>0$. 
Since $\langle e_1,\ldots,e_n,x\rangle$ must be contained in a strongly 
convex cone in the fan of $W$, this is a contradiction. 

Next, we suppose $\sigma_1\subset\sigma$. 
We can apply the same argument as above. 
Let $x=x_1e_1+\cdots +x_ne_n
\in N$ be the primitive generator 
for some one-dimensional 
face of $\sigma$ which is not contained in $\sigma_1$. 
In this case, we can obtain the 
inequalities $x_i-ax_1\ge 0$ for $2\le i\le n$, and $x_1<0$ by considering 
the facets of $\sigma_1$, and  
the equality $(1-(n-1)a)x_1+x_2+\cdots+x_n=1$ by the fact that 
$\pi$ is crepant. 
If $x_i>0$ for some $2\le i\le n$, then the equality
\begin{equation*}
\begin{split}
1&=(1-(n-1)a)x_1+x_2+\cdots+x_n
\\&=(1-a)x_1+(x_2-ax_1)+\cdots+(x_{i-1}-ax_1)+x_i+(x_{i+1}-ax_1)
+\cdots+(x_{n}-ax_1)
\end{split}
\end{equation*}
tells us that $a=1$ because $x_1<0$, and that 
$x_j-x_1=0$ for every $j\neq i$ and $x_i=1$. 
Therefore, as in the proof of Claim, 
we get a contradiction by the linear relation 
\[
x+(-x_1)e_{n+1}=(1-x_1)e_i. 
\] 
So we obtain that $x_i\leq 0$ holds for 
$2\leq i\leq n$. 
Thus we get a linear relation 
\[
(-x_1)e_1+\cdots+(-x_{n})e_{n}+x=0
\]
as above, where $-x_i\geq 0$ for $2\leq i\leq n$ and $-x_1>0$. 
This means that the cone $\langle e_1, \cdots, e_n, x\rangle$ contains 
a positive dimensional linear subspace of $N_{\mathbb R}$. 
This is a contradiction as explained above. 
 
In any case, we get a contradiction. Therefore, 
$\pi:\widetilde X\to X$ is an isomorphism 
over some open neighborhood of $E$. 
\end{proof} 
Since $\pi:\widetilde X\to X$ is an isomorphism over some 
open neighborhood of $E$ by Lemma \ref{f-lem3.2.4}, 
we see that $E$ is $\mathbb Q$-Cartier. 
Therefore, the exceptional locus of $\varphi_R$ coincides with 
$E\simeq \mathbb P^{n-1}$. Thus $\varphi_R:X\to W$ is 
a weighted blow-up 
at a torus invariant smooth 
point $P\in W$ with the weight $(1, a, \cdots, a)$ for some positive integer $a$. 
\end{step}
So, we complete the proof of Theorem \ref{f-thm3.2.1}. 
\end{proof}

\begin{rem}\label{f-rem3.2.5} 
If $B$ is complete, then we can make $C$ a torus invariant curve on $X$ in 
Theorem \ref{f-thm3.2.1}. 
For the details, see the proof of \cite[Theorem 0.1]{fujino-notes}. 
\end{rem}

We explicitly state the basic properties 
of the weighted blow-up in Theorem 
\ref{f-thm3.2.1} for the reader's convenience. 

\begin{prop}\label{f-prop3.2.6}
Let $\varphi:X\to \mathbb A^n$ be the weighted blow-up at  $0\in 
\mathbb A^n$ with the weight $(1, a, \cdots, a)$ for some 
positive integer $a$. 
If $a=1$, then $\varphi$ is the standard blow-up at $0$. 
In particular, $X$ is smooth. 
If $a\geq 2$, then $X$ has only canonical Gorenstein singularities which are not 
terminal singularities. Furthermore, the exceptional locus $E$ of $\varphi$ 
is isomorphic to 
$\mathbb P(1, a, \cdots, a)\simeq \mathbb P^{n-1}$ and 
\[K_X=\varphi ^*K_{\mathbb A^n}+(n-1)aE. \] 
We note that $E$ is not Cartier on $X$ if $a\ne 1$. 
However, $aE$ is a Cartier divisor on $X$. 
\end{prop}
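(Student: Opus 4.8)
The plan is to prove Proposition \ref{f-prop3.2.6} by a direct toric computation with the fan of the weighted blow-up, since everything is explicit and local. First I would set up coordinates: write $N=\mathbb Z^n$ with standard basis $e_1,\dots,e_n$, so that $\mathbb A^n$ corresponds to the cone $\sigma=\langle e_1,\dots,e_n\rangle$, and the weighted blow-up with weight $(1,a,\dots,a)$ is the star subdivision of $\sigma$ at the primitive vector $e_{n+1}:=e_1+ae_2+\cdots+ae_n$ (note $\gcd(1,a,\dots,a)=1$, so $e_{n+1}$ is primitive). The fan of $X$ then consists of the $n$ maximal cones $\sigma_i=\langle\{e_1,\dots,e_{n+1}\}\setminus\{e_i\}\rangle$ for $1\le i\le n$, together with their faces. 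When $a=1$ this is exactly the barycentric-type subdivision giving the ordinary blow-up at the origin, and one checks directly that each $\sigma_i$ is a unimodular (nonsingular) cone with $e_{n+1}=e_1+\cdots+e_n$, so $X$ is smooth; this is the first, easy case.

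For $a\ge 2$ I would compute the discrepancy. Since $K_X=\varphi^*K_{\mathbb A^n}+cE$ with $E=V(e_{n+1})$, the coefficient $c$ is determined by the support function of $K_{\mathbb A^n}$: if $\psi$ is the piecewise-linear function with $\psi(e_i)=1$ for $1\le i\le n$ (so that $K_{\mathbb A^n}=-\sum D_{e_i}$ is given by $m=(1,\dots,1)$ on $\sigma$), then $c=\psi(e_{n+1})-1=(1+a+\cdots+a)-1=(n-1)a$, which is exactly the claimed discrepancy $K_X=\varphi^*K_{\mathbb A^n}+(n-1)aE$. Because $a\ge 1$ gives discrepancy $(n-1)a\ge 0$, the singularities of $X$ are canonical; since $(n-1)a>0$ whenever $n\ge 2$ and $a\ge 1$, but the issue of terminality is subtler — here I would note that the singular cones $\sigma_i$ for $i\ge 2$ are not terminal because $e_1$ (or an appropriate lattice point) lies on the boundary hyperplane $\psi=1$, giving discrepancy exactly $0$ for the corresponding exceptional divisor of a further subdivision, hence $X$ is canonical but not terminal. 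I would also record that $K_X$ is Cartier: the support function of $K_X$ takes value $1$ on each $e_i$ and value $\psi(e_{n+1})=(n-1)a+1$ on $e_{n+1}$, and one checks this is $\mathbb Z$-linear on each $\sigma_i$, so $K_X$ is a Cartier divisor and the singularities are Gorenstein canonical.

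Next I would identify the exceptional locus. The exceptional divisor is $E=V(e_{n+1})$, whose fan is $\operatorname{Star}(\langle e_{n+1}\rangle)$ in $N(\langle e_{n+1}\rangle)=N/\mathbb Z e_{n+1}$; the images of the cones $\sigma_i$ give the fan of $\mathbb P(1,a,\dots,a)$ — indeed the relation $e_1+ae_2+\cdots+ae_n-e_{n+1}=0$ in $N$ descends, after quotienting by $e_{n+1}$, to the defining relation of the weighted projective space with weights $(1,a,\dots,a)$. Since $\gcd(a,\dots,a)=a$ divides none of... — more precisely, by Lemma \ref{f-lem3.1.9} one checks that the images $\bar e_1,\dots,\bar e_n$ generate the quotient lattice, so $E\simeq\mathbb P(1,a,\dots,a)$, and since $\gcd(1,a)=1$ this weighted projective space is isomorphic to $\mathbb P^{n-1}$. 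Finally, to see $E$ is not Cartier on $X$ for $a\ge 2$ but $aE$ is: $aE$ corresponds to the support function with value $a$ on $e_{n+1}$ and $0$ on the $e_i$, which is $\mathbb Z$-linear on each $\sigma_i$ (on $\sigma_n$, say, the linear functional sending $e_i\mapsto 0$ for $i\ne n$ and $e_{n+1}\mapsto a$ sends $e_n\mapsto -1$ since $e_n=\frac1a(e_{n+1}-e_1-ae_2-\cdots-ae_{n-1})$... wait, that needs $a\mid 1$ — instead one uses the correct facet functional, which on $\sigma_n$ is the functional vanishing on $e_1,\dots,e_{n-1}$ and taking value $a$ on $e_{n+1}$, hence value $a$ on $ae_n$, i.e. value $1$ on $e_n$: integral); but $E$ itself requires value $1$ on $e_{n+1}$ and $0$ on the $e_i$, which on $\sigma_n$ would need value $1/a$ on $e_n$, not integral, so $E$ is not Cartier. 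The main obstacle I expect is the bookkeeping around terminality — carefully exhibiting a divisorial valuation with discrepancy $0$ over $X$ (equivalently, showing a lattice point lies exactly on the boundary of the relevant polytope) to conclude "not terminal" — rather than any of the smoothness/Cartier/identification computations, which are routine support-function calculations.
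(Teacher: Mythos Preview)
Your approach is exactly what the paper intends: its proof reads, in full, ``We can check the statements by direct calculation,'' and your support-function computations for the discrepancy, for Gorensteinness, for the Cartier index of $E$, and for the identification $E\simeq\mathbb P(1,a,\dots,a)\simeq\mathbb P^{n-1}$ are the expected ones. For the one point you flagged as the obstacle---non-terminality when $a\ge 2$---the lattice point you want is $v=e_1+\cdots+e_n=(1-\tfrac1a)e_1+\tfrac1a\,e_{n+1}\in\langle e_1,e_{n+1}\rangle\subset\sigma_n$, which is primitive, not a ray generator, and satisfies $m(v)=1$ for the linear functional $m$ giving $-K_X$ on $\sigma_n$, hence yields an exceptional divisor of discrepancy $0$; your tentative choice of $e_1$ itself does not work since it is already a ray of the fan.
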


\begin{proof}
We can check the statements by direct calculation. 
\end{proof}

Let us see an important related example. 

\begin{ex}\label{f-ex3.2.7}
We fix $N=\mathbb Z^n$ and let $\{e_1, \cdots, e_n\}$ be the
standard basis of $N$. We consider the cone $\sigma =\langle e_1,
\cdots, e_n\rangle$ in $N'=N+\mathbb Z e_{n+1}$, where $e_{n+1}=
\frac{1}{b}(1, a, \cdots, a)$.
Here, $a$ and $b$ are positive integers such that $\gcd(a,b)=1$.
We put $Y=X(\sigma)$ is the associated affine toric variety which has only one
singular point $P$.
We take a weighted blow-up of $Y$ at $P$ with
the weight $\frac{1}{b}(1, a, \cdots, a)$.
This means that we divide $\sigma$ by $e_{n+1}$ 
and obtain a fan $\Sigma$ of $N'_{\mathbb R}$.
We define $X=X(\Sigma)$. Then the induced toric projective birational
morphism $f:X\to Y$ is the desired weighted blow-up. It is obvious that $X$
is $\mathbb Q$-factorial and $\rho (X/Y)=1$. We can easily
obtain \[K_X=f^*K_Y+\left(\frac{1+(n-1)a}{b}-1\right)E, \] where $E=V(e_{n+1})\simeq
\mathbb P^{n-1}$ is the exceptional divisor of $f$, 
and \[-K_X\cdot C=(n-1)-\frac{b-1}{a},\]  
where $C=V(\langle e_2, \cdots, e_{n-1}, e_{n+1}\rangle)$ is
a torus invariant irreducible curve on $E$. 
We note that \[-(K_X+\delta E)\cdot C>n-1\] 
if and only if 
\[
\delta>\frac{b-1}{b}
\] 
since $E\cdot C=-\frac{b}{a}$. 
\end{ex}

In subsection \ref{f-subsec3.3}, we will see more sophisticated examples 
(see Examples \ref{f-ex3.3.1} and \ref{f-ex3.3.2}), which show the 
estimates obtained in Theorem \ref{f-thm3.2.1} are the best. 

By the proof of Theorem \ref{f-thm3.2.1}, we can prove the following 
theorem. 

\begin{thm}\label{f-thm3.2.8}
Let $f:X\to Y$ be a projective toric morphism with $\dim X=n$ and
let $\Delta=\sum \delta_i \Delta_i$ be an effective $\mathbb R$-divisor on $X$,
where $\Delta_i$ is a torus invariant prime divisor and $0\leq \delta_i\leq 1$ for
every $i$.
Let $R$ be an extremal ray of $\NE(X/Y)$ and let $\varphi _R: X\to W$
be the extremal contraction morphism associated to $R$.
Assume that $X$ is $\mathbb Q$-factorial and $\varphi_R$ is
birational.
If \[\min _{[C]\in R}(-(K_X+\Delta)\cdot C)>n-1, \] 
then
$\varphi_R:X\to W$ is the weighted blow-up described in Example \ref{f-ex3.2.7} 
and $\xSupp \Delta\supset E$, where $E\simeq \mathbb P^{n-1}$ is the
exceptional divisor of $\varphi_R$.
\end{thm}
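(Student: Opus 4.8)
The plan is to imitate the proof of Theorem \ref{f-thm3.2.1}, but keeping track of the boundary $\Delta$. Since $X$ is $\mathbb Q$-factorial and $\varphi_R$ is birational, Reid's description (see \ref{f-say3.1.2}) applies: writing $A=E_1\cap\cdots\cap E_\alpha$ for the exceptional locus with the $E_i$ negative on $R$, $B=\varphi_R(A)$, and $F$ a general fiber of $A\to B$, we have $\dim A=n-\alpha$, $\dim B=\beta-\alpha$, $\alpha\ge 1$, and $F$ is a $\mathbb Q$-factorial toric Fano variety of Picard number one with $\dim F=n-\beta$. For a curve $C$ in $F$, subadjunction gives $(K_X+E_1+\cdots+E_\alpha)|_A=K_A+D$ with $D\ge 0$ a different, and hence, exactly as in \eqref{f-eq3.1},
\[
-(K_X+\Delta)\cdot C \;=\; -(K_A+D)\cdot C \;-\;\Delta\cdot C\;+\;\sum_{i=1}^{\alpha}E_i\cdot C
\;\le\; -(K_A+D)\cdot C\;+\;\sum_{i=1}^{\alpha}E_i\cdot C
\]
provided $\Delta\cdot C\ge 0$, and the right-hand side is $\le -K_F\cdot C$ when in addition $E_i\cdot C\le 0$ for all $i$. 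So the first step is to choose $C$ a torus invariant curve in $F$ realizing the estimate $-K_F\cdot C\le \dim F+1$ from \cite[Proposition 2.9]{fujino-notes} (see \ref{f-say3.1.8}); then for such a $C$ one gets $-(K_X+\Delta)\cdot C< n-\beta+1\le n$, and when $F\not\simeq\mathbb P^{n-\beta}$ even $\le n-\beta\le n-1$ by Remark \ref{f-rem3.2.2}.

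The hypothesis $\min_{[C]\in R}(-(K_X+\Delta)\cdot C)>n-1$ then forces, first, $\beta=\alpha=1$ (else $n-\beta+1\le n-1$), so $\dim A=\dim F=n-1$, $\dim B=0$, and $B$ is a torus invariant point $P\in W$; and second, $F\simeq\mathbb P^{n-1}$ (else the sharper $\le n-1$ bound would contradict the hypothesis). Thus $\varphi_R$ contracts the divisor $A=E_1$, with general fiber $\mathbb P^{n-1}$, to $P$. Combinatorially, letting $\langle e_1,\dots,e_n\rangle$ be the cone of $P$, $X$ is obtained by the star subdivision at $e_{n+1}$ with $be_{n+1}=a_1e_1+\cdots+a_ne_n$. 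Now I re-run the intersection-number computation \eqref{f-eq3.2}, but this time with $\Delta$: since $E=V(e_{n+1})=E_1$ and, by what we just said, $E$ is negative on $R$ while the inequality to exploit is $-(K_X+\Delta)\cdot V(\mu_{k,n})>n-1$. Writing $\Delta=\sum\delta_i\Delta_i$, the contribution $-\Delta\cdot V(\mu_{k,n})$ is a sum of terms $-\delta_i\,\Delta_i\cdot V(\mu_{k,n})$; the only torus invariant prime divisor meeting $V(\mu_{k,n})$ negatively is $E$ itself (with $E\cdot V(\mu_{k,n})<0$), all the $V(e_i)$ meeting it non-negatively. Hence if $E\not\subset\xSupp\Delta$, then $-\Delta\cdot V(\mu_{k,n})\le 0$ for a suitable choice of $k$, so $-(K_X+\Delta)\cdot V(\mu_{k,n})\le -K_X\cdot V(\mu_{k,n})$, and Step \ref{f-3.2.1step3} of Theorem \ref{f-thm3.2.1} already shows this is $\le n-1$ — contradicting the hypothesis. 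Therefore $E\subset\xSupp\Delta$.

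It remains to pin down the shape of the weighted blow-up, i.e. to show $\varphi_R$ is exactly the one in Example \ref{f-ex3.2.7}. Here I want to argue, as in Case \ref{f-case1} and Case \ref{f-case2}, that the numerical constraints plus $E\subset\xSupp\Delta$ force $a_1\le\cdots\le a_n$ with $a_1=1$ and $a_2=\cdots=a_n=:a$ (the case $a_2\ne a_n$ being excluded by $\frac{a_1}{a_n},\frac{a_2}{a_n}\le\frac12$ as before), giving the weight $(1,a,\dots,a)$; I do \emph{not} get $b=1$ this time, because the positive $\delta_E$-term relaxes that constraint — and indeed Example \ref{f-ex3.2.7} with general $b$ is precisely the expected output, since there $-(K_X+\delta E)\cdot C>n-1$ iff $\delta>\frac{b-1}{b}$. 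So the main obstacle, and the step to be careful with, is the bookkeeping in this last computation: one must check that the inequalities $\mult(\mu_{k,n})/\mult(\sigma_k)\le 1$ together with $-(K_X+\Delta)\cdot V(\mu_{k,n})>n-1$ and $\delta_E\,|E\cdot V(\mu_{k,n})|$ bounded (the coefficient $\delta_E\le 1$) still leave only the claimed family, and that $X$ is $\mathbb Q$-factorial globally (automatic here since we assumed it) so that $E$ really is the full exceptional divisor and $\langle e_1,\dots,e_n\rangle$ is simplicial — indeed, $\mult(\sigma_1)=\mult(\mu_{1,l})$ for $2\le l\le n$ forces $\mult(\sigma_1)=1$, so $P$ is a smooth torus invariant point, matching Example \ref{f-ex3.2.7}. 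This identifies $\varphi_R$ with the weighted blow-up of Example \ref{f-ex3.2.7} and completes the proof.
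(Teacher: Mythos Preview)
Your overall strategy matches the paper's, but the argument has a real gap in the very first inequality. You bound
\[
-(K_X+\Delta)\cdot C \;\le\; -(K_A+D)\cdot C \;+\;\sum_{i=1}^{\alpha}E_i\cdot C
\]
only ``provided $\Delta\cdot C\ge 0$'', and then use this to force $\alpha=\beta=1$ and $F\simeq\mathbb P^{n-1}$. But $\Delta\cdot C\ge 0$ is \emph{not} known here: if some $E_i$ lies in $\xSupp\Delta$ (which is exactly what you are ultimately trying to prove), then $\Delta\cdot C$ may well be negative, and your chain of inequalities collapses before you ever learn that $\alpha=1$. The paper avoids this by not separating the two terms: it groups them as $(E_1+\cdots+E_\alpha-\Delta)\cdot C$ and observes that this is $\le 0$ unconditionally, because the coefficient of each $E_i$ in $\Delta$ is at most $1$ (so $(1-\delta_i)E_i\cdot C\le 0$) while every other component of $\Delta$ is non-negative on $R$. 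That single observation gives $-(K_X+\Delta)\cdot C\le -(K_A+D)\cdot C\le -K_F\cdot C$ with no proviso, and the reduction to $F\simeq\mathbb P^{n-1}$, $\dim B=0$ follows.

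Your third paragraph also does not go through as written. You propose to run the Case~\ref{f-case1}/Case~\ref{f-case2} analysis with the unknown coefficient $\delta_E$ floating around, but you never carry out that bookkeeping, and the conclusion you draw from $\mult(\sigma_1)=1$ is wrong: $\sigma_1=\langle e_2,\dots,e_n,e_{n+1}\rangle$ being nonsingular is a statement about a chart of $X$, not about $P\in W$, and in Example~\ref{f-ex3.2.7} the point $P$ is a $\tfrac{1}{b}(1,a,\dots,a)$ quotient singularity, not smooth, whenever $b>1$. The paper's device is again to trade $\Delta$ for the exceptional divisor: once $\alpha=1$ and $A=E$, the same coefficient bound gives $-(K_X+\Delta)\cdot C\le -(K_X+A)\cdot C$, so the hypothesis yields
\[
-(K_X+A)\cdot V(\mu_{k,n})=\frac{1}{a_n}\Bigl(\sum_{i=1}^n a_i\Bigr)\frac{\mult(\mu_{k,n})}{\mult(\sigma_k)}>n-1,
\]
which involves neither $b$ nor $\delta_E$. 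From this the Case~\ref{f-case1}/\ref{f-case2} analysis goes through verbatim to give $(a_1,\dots,a_n)=(1,a,\dots,a)$ and $\mult(\sigma_1)=1$, with $b$ unconstrained --- exactly Example~\ref{f-ex3.2.7}. The inclusion $E\subset\xSupp\Delta$ then drops out by the direct computation you already sketched (if $E\not\subset\xSupp\Delta$ then $-(K_X+\Delta)\cdot C\le -K_X\cdot C\le n-1$ along the curve of Example~\ref{f-ex3.2.7}).
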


\begin{proof}
We use the same notation as in Step \ref{f-3.2.1step2} in the proof of 
Theorem \ref{f-thm3.2.1}. 
Since \[(E_1+\cdots +E_\alpha-\Delta)\cdot C\leq 0,\] 
we obtain 
\begin{equation*}
\begin{split}
-(K_X+\Delta)\cdot C &= -(K_A+D)\cdot C +
(E_1+\cdots +E_\alpha-\Delta)\cdot C\\ 
&\leq -(K_A+D)\cdot C 
\\&\leq -K_F\cdot C
\end{split} 
\end{equation*} 
(see \eqref{f-eq3.1}). 
By assumption, 
$-(K_X+\Delta)\cdot C>n-1$. 
This implies that 
$n-1<-K_F\cdot C$. 
Therefore, we obtain $\dim A=\dim F=n-1$, $F\simeq \mathbb P^{n-1}$ and 
$\dim B=0$. 
In this situation, 
\[
-(K_X+\Delta)\cdot C\leq -(K_X+A)\cdot C
\] 
always holds. Thus we have 
\begin{equation*}
-(K_X+A)\cdot V(\mu _{k,n})=\frac{1}{a_n}
\left(\sum _{i=1}^{n}
a_i\right)\frac{\mult(\mu_{k,n})}{\mult(\sigma_k)}> n-1
\end{equation*} for 
$1\leq k\leq n-1$ (see \eqref{f-eq3.2}). 
We note that $A=V(e_{n+1})$. 
Thus, by the same arguments as in the proof of 
Theorem \ref{f-thm3.2.1}, 
we see that $\varphi_R$ is the weighted blow-up 
described in Example \ref{f-ex3.2.7}. 
More precisely, we obtain that 
$(a_1, \cdots, a_n)=(1, \cdots, 1)$ or $(1, a, \cdots, a)$ and 
that $\sigma_1$ is a nonsingular cone. 
However, $b$ is not necessarily $1$ in the proof of Theorem \ref{f-thm3.2.1}.   
By direct calculation, 
we have 
$\xSupp \Delta\supset E$, where 
$E(=A=F)$ is the exceptional divisor of $\varphi_R$.  
\end{proof}

Finally, we prove 
the following theorem. 

\begin{thm}[Theorem \ref{f-thm1.3}]\label{f-thm3.2.9} 
Let $X$ be a $\mathbb Q$-Gorenstein 
projective toric $n$-fold with $\rho (X)\geq 2$. 
Let $R$ be a $K_X$-negative extremal ray of 
$\NE(X)$ such that 
\[
l(R)=\min _{[C]\in R}(-K_X\cdot C)>n-1. 
\] 
Then the extremal contraction $\varphi_R:X\to W$ associated to 
$R$ is a $\mathbb P^{n-1}$-bundle over $\mathbb P^1$. 
\end{thm}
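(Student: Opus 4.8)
The plan is to reduce the statement to Theorem~\ref{f-thm3.2.1} by analyzing the possible types of the extremal contraction $\varphi_R$. Since $\rho(X)\geq 2$, the contraction $\varphi_R$ is not the contraction to a point, so $\dim W\geq 1$. First I would dispose of the birational case: if $\varphi_R$ were birational, then by Theorem~\ref{f-thm3.2.1} we would have $l(R)<d+1\leq n$, and since $l(R)>n-1$ forces $l(R)=n-1$ only in the birational case where $d=n-1$, we would get $l(R)=n-1$, contradicting $l(R)>n-1$. (One must be slightly careful here: Theorem~\ref{f-thm3.2.1} gives $l(R)\leq n-1$ whenever $\varphi_R$ is birational, so $l(R)>n-1$ rules out the birational case outright.) Hence $\varphi_R$ is a Fano contraction, i.e. $\dim W<n$ and the general fiber $F$ is a $\mathbb{Q}$-factorial toric Fano variety with $\rho(F)=1$.

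Next I would pin down the dimension of $F$. Using Reid's description (\ref{f-say3.1.2}) together with the adjunction/length computation in \ref{f-say3.1.8} and Proposition~\ref{f-prop3.1.4}, one compares $-K_X\cdot C$ with $-K_F\cdot C$ for a curve $C$ in a general fiber. Since $\varphi_R$ is a Fano contraction there are no negative-on-$R$ divisors $E_i$ ($\alpha=0$), so in the inequality \eqref{f-eq3.1} the correction terms $E_i\cdot C$ vanish and one gets $-K_X\cdot C = -K_F\cdot C + (\text{nonnegative terms from }E_{\beta+1},\ldots)$; more precisely $-K_X\cdot C \leq -K_F\cdot C \leq \dim F + 1$. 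Wait — I need the inequality in the right direction. Actually for a Fano contraction one has $-K_X\cdot C \le -K_F\cdot C$ by subadjunction only when there are extra boundary terms; here with $\alpha = 0$ I should instead argue directly that a general fiber curve satisfies $-K_X \cdot C \le \dim F + 1 \le \dim W$... let me reconsider: the hypothesis $l(R) > n-1$ combined with $l(R) \le \dim F + 1 \le (n - \dim W) + 1$ forces $\dim W \le 1$, hence $\dim W = 1$ and $\dim F = n-1$, and moreover $l(R) = n-1$ or $n$. By Proposition~\ref{f-prop3.1.4} (or Remark~\ref{f-rem3.2.2}), $-K_F\cdot C \le n-1$ unless $F \simeq \mathbb{P}^{n-1}$; since $l(R) > n-1$ we conclude $F \simeq \mathbb{P}^{n-1}$.

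Now $\varphi_R : X \to W$ is a contraction to a curve $W$ with general fiber $\mathbb{P}^{n-1}$ and relative Picard number one. Since $W$ is a complete toric curve, $W \simeq \mathbb{P}^1$. It remains to upgrade "general fiber $\mathbb{P}^{n-1}$, relative $\rho = 1$" to an honest $\mathbb{P}^{n-1}$-bundle. The combinatorial picture: the fan of $X$ maps to the fan of $\mathbb{P}^1$ (two rays), and the requirement that \emph{every} fiber curve has $-K_X \cdot C > n-1$, applied via the computation in \ref{f-say3.1.8} to each maximal cone, should force every fiber to be $\mathbb{P}^{n-1}$ with no singular or non-product behavior — i.e. all the relevant multiplicities equal $1$. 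Concretely, $X$ is $\mathbb{Q}$-factorial (being $\mathbb{Q}$-Gorenstein toric with $\rho(X) = 2$... actually I'd note $\rho(X/W) = 1$ and $\rho(W) = 1$ so $\rho(X) = 2$), and a $\mathbb{Q}$-factorial toric variety fibering over $\mathbb{P}^1$ with fibers $\mathbb{P}^{n-1}$ and no reducible fibers is a projectivized split bundle $\mathbb{P}_{\mathbb{P}^1}(\mathcal{O} \oplus \mathcal{O}(b_2) \oplus \cdots \oplus \mathcal{O}(b_n))$. I expect the main obstacle to be exactly this last step: showing that the length condition excludes "degenerate" toric $\mathbb{P}^{n-1}$-fibrations (weighted-projective-bundle fibers, or fibers that jump), which requires running the intersection-number estimate of \ref{f-say3.1.8} fiber-by-fiber and checking that $l(R) > n-1$ forces the weight vector on each maximal cone to be $(1,\ldots,1)$ with trivial multiplicity, hence a genuine bundle structure.
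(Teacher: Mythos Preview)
Your overall strategy matches the paper's: rule out the birational case by Theorem~\ref{f-thm3.2.1}, use the length bound on the general fiber to force $F\simeq\mathbb{P}^{n-1}$ and $W\simeq\mathbb{P}^1$, then upgrade to a bundle. Two points need correction.

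First, the adjunction step for a Fano contraction is an \emph{equality}, not an inequality: since $\alpha=0$ there is no exceptional boundary, and for a general fiber $F$ one has $K_F=K_X|_F$, hence $-K_X\cdot C=-K_F\cdot C$ for any curve $C\subset F$. Your hedging about the direction of the inequality is unnecessary; the paper simply writes $-K_X\cdot C=-K_F\cdot C$ and then invokes Theorem~\ref{f-thm3.1.1} to get $F\simeq\mathbb{P}^{n-1}$, $W=\mathbb{P}^1$, and $\rho(X)=2$.

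Second, and more seriously, you assert ``$X$ is $\mathbb{Q}$-factorial (being $\mathbb{Q}$-Gorenstein toric with $\rho(X)=2$\ldots)'' --- but $\mathbb{Q}$-Gorenstein does not imply $\mathbb{Q}$-factorial, and Reid's description in \ref{f-say3.1.2} is stated for $\mathbb{Q}$-factorial $X$. The paper treats this as a separate step: take a small projective $\mathbb{Q}$-factorialization $\pi:\widetilde{X}\to X$, find an extremal ray $\widetilde{R}$ of $\NE(\widetilde{X}/W)$ with $l(\widetilde{R})>n-1$, and observe that the $\mathbb{Q}$-factorial argument then forces $\rho(\widetilde{X})=2$, contradicting $\rho(\widetilde{X})>\rho(X)\geq 2$ if $\pi$ were nontrivial. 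So $\mathbb{Q}$-factoriality is a \emph{conclusion}, not a hypothesis, and needs this argument.

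For the bundle upgrade you correctly identify as the main obstacle, the paper does an explicit fan computation: writing the primitive generators as $v_1,\ldots,v_n,v_+,v_-$ with $v_\pm$ mapping to $\pm a_\pm$ under the projection to $\mathbb{Z}$, one normalizes $0\leq b_i<a_+$ for the horizontal coordinates of $v_+$, and if some $b_{i_0}\neq 0$ computes $-K_X\cdot C=n\gcd(a_+,b_1)/a_+\leq n/2\leq n-1$ for a suitable torus-invariant fiber curve $C$, contradicting $l(R)>n-1$. This forces all $b_i=0$ and hence $a_+=1$; symmetrically $a_-=1$, giving the bundle.
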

\begin{proof}
We divide the proof into several steps. 
From Step \ref{f-3.2.9step1} to Step \ref{f-3.2.9step4}, 
we will prove this theorem under the extra assumption that $X$ is 
$\mathbb Q$-factorial. In Step \ref{f-3.2.9step5}, we will 
prove that $X$ is always $\mathbb Q$-factorial if there exists an 
extremal ray $R$ with $l(R)>n-1$. 
\setcounter{step}{0}
\begin{step}\label{f-3.2.9step1}
We consider the contraction morphism 
$\varphi_R:X\to W$ associated to $R$. 
By Theorem \ref{f-thm3.2.1}, 
$\varphi_R$ is a Fano contraction, that is, 
$\dim W<\dim X$. 
Let $F$ be a general fiber of $\varphi_R$ and let $C$ be 
a curve on $F$. 
Then, by adjunction, we have 
\[
-K_X\cdot C=-K_F\cdot C. 
\]
We note that $F$ is a fake weighted projective 
space. By Theorem \ref{f-thm3.1.1}, 
$F\simeq \mathbb P^{n-1}$, $W=\mathbb P^1$, and 
$\rho(X)=2$. 
\end{step}
\begin{step}\label{f-3.2.9step2} 
Without loss of generality, $\varphi_R:X\to W$ is induced by 
the projection $\pi:N=\mathbb Z^n\to \mathbb Z, 
(x_1, \cdots, x_n)\mapsto x_n$. 
We put 
\begin{align*}
v_1  &= (1,0,\cdots, 0), & v_2&=(0,1,0, \cdots, 0), & &\quad \quad  \cdots , \\ 
v_{n-1}& =(0,\cdots, 0, 1, 0), & v_n  &= (-1,\cdots, -1, 0), 
&v_+&=(b_1,\cdots, b_{n-1}, a_+), \\ v_-&=(c_1, \cdots, c_{n-1}, -a_-), &
\end{align*} 
where $a_+$ and $a_-$ are positive integers. 
More precisely, $v_i$ denotes the vector 
with a $1$ in the $i$th coordinate and $0$'s elsewhere for 
$1\leq i\leq n-1$. 
We may assume that the fan $\Sigma$ corresponding to the toric 
variety $X$ is the subdivision of $N_{\mathbb R}$ by $v_1, \cdots, v_n, 
v_+$, and $v_-$. We note that the following equalities 
\begin{equation}\label{f-ex3.3}
\begin{cases}
D_1-D_n+b_1D_++c_1D_-=0\\
D_2-D_n+b_2D_++c_2D_-=0\\
\quad \quad \quad \quad \vdots\\
D_{n-1}-D_n+b_{n-1}D_++c_{n-1}D_-=0\\
a_+D_+-a_-D_-=0
\end{cases}
\end{equation}
hold, where $D_i=V(v_i)$ for 
every $i$ and $D_{\pm}=V(v_{\pm})$. 
We note that it is sufficient to prove that $a_+=a_-=1$. 
\end{step}
\begin{step}\label{f-3.2.9step3}
In this step, we will prove that $a_+=1$ holds. 

By taking a suitable coordinate change, we may assume that 
\[
0\leq b_1, \cdots, b_{n-1}<a_+ 
\] holds. 
If $b_i=0$ for every $i$, then $a_+=1$ since 
$v_+$ is a primitive vector of $N$. 
From now on, we assume that $b_{i_0}\ne 0$ for some $i_0$. 
Without loss of generality, we may assume that 
$b_1\ne 0$. 
We put 
\[
C=V(\langle v_2, \cdots, v_{n-1}, v_+\rangle). 
\]
Then $C$ is a torus invariant curve contained in a fiber of $\varphi_R:X\to W$. 
We have 
\[
D_1\cdot C=\frac {\mult (\langle v_2, \cdots, v_{n-1}, v_+\rangle)}
{\mult (\langle v_1, \cdots, v_{n-1}, v_+\rangle)}=\frac 
{\gcd(a_+, b_1)}{a_+}
\] (see \ref{f-say2.1.5})
and $
D_+\cdot C=D_-\cdot C=0
$.  
We note that 
$D_i\cdot C=D_1\cdot C$ for every $i$ 
by \eqref{f-ex3.3}. 
Therefore, 
we obtain 
\[
-K_X\cdot C=\frac{n\gcd (a_+, b_1)}{a_+}. 
\] 
Since $0<b_1<a_+$, we see $\gcd(a_+, b_1)<a_+$. 
Thus, the following inequality 
\[
\frac{\gcd(a_+, b_1)}{a_+}\leq \frac{1}{2}
\]
holds. 
This means that 
\[
-K_X\cdot C\leq \frac{n}{2}\leq n-1. 
\] 
This is a contradiction. 
Therefore, $b_i=0$ for every $i$ and $a_+=1$. 
\end{step}

\begin{step}\label{f-3.2.9step4} 
By the same argument, we get $a_-=1$. 
Thus, we see that $\varphi_R:X\to W$ is a 
$\mathbb P^{n-1}$-bundle over $\mathbb P^1$. 
\end{step}
\begin{step}\label{f-3.2.9step5}
In this step, we will prove that $X$ is $\mathbb Q$-factorial. 

We assume that $X$ is not $\mathbb Q$-factorial. Let $\pi:\widetilde 
X\to X$ be a small projective $\mathbb Q$-factorialization. 
We note that $\rho(\widetilde X)>\rho (X)\geq 2$ since 
$X$ is not $\mathbb Q$-factorial. 
By the argument in Step \ref{f-3.2.1step1} in the proof of Theorem \ref{f-thm3.2.1}, 
there exists an extremal ray $\widetilde R$ of $\NE(\widetilde X/W)$ 
with $l(\widetilde R)>n-1$. 
Let $\varphi_{\widetilde R}: \widetilde X\to \widetilde W$ be the contraction morphism 
associated to $\widetilde R$. Then, by the argument in Step \ref{f-3.2.9step1}, 
we see that $\rho(\widetilde X)=2$ and that $\varphi_{\widetilde R}$ is 
nothing but $\varphi_R\circ \pi$. 
This is a contradiction because $\rho(\widetilde X)>2$. 
This means that $X$ is always $\mathbb Q$-factorial. 
\end{step} 
Therefore, we get the desired statement. 
\end{proof}

We close this subsection with an easy 
example, which shows that Theorem \ref{f-thm3.2.9} is sharp. 

\begin{ex}\label{f-ex3.2.10}
We consider $N=\mathbb Z^2$, 
$
v_1=(0, 1), v_2=(0, -1), v_+=(2, 1), v_-=(-1, 0)
$ 
and the projection $\pi:N=\mathbb Z^2\to \mathbb Z, (x_1, x_2)\mapsto x_1$. 
Let $\Sigma$ be the fan obtained by subdividing $N_\mathbb R$ by 
$\{v_1, v_2, v_+, v_-\}$. 
Then $X=X(\Sigma)$ is a projective 
toric surface with $\rho (X)=2$. 
The map $\pi:N\to \mathbb Z$ induces a Fano contraction morphism 
$\varphi:X\to \mathbb P^1$. 
Let $R$ be the corresponding extremal ray of $\NE(X)$. 
Then $l(R)=1=2-1$. 
Note that $X$ is not a $\mathbb P^1$-bundle over $\mathbb P^1$. 
\end{ex}

\subsection{Examples}\label{f-subsec3.3} 
In this subsection, we will see that the 
estimates in Theorem \ref{f-thm3.2.1} are the best by the following 
examples. 

\begin{ex}\label{f-ex3.3.1}
We use the same notation as in Example \ref{f-ex3.2.7}. 
In Example \ref{f-ex3.2.7}, we put $a=k^2$ and $b=mk+1$ for any positive integers 
$k$ and $m$. 
Then it is obvious that $\gcd(a,b)=1$. 
So, we can apply the construction in Example \ref{f-ex3.2.7}. 
Then we obtain a toric projective birational morphism 
$f:X\to Y$ such that 
$X$ is 
$\mathbb Q$-factorial and $\rho (X/Y)=1$. 
We can easily check that 
\[K_X=f^*K_Y+\left(\frac{1+k^2(n-1)}{mk+1}-1\right)E\] and 
\[-K_X\cdot C=n-1-\frac{m}{k}. \]
Therefore, we see that the minimal lengths of extremal rays do not 
satisfy the ascending chain condition in this birational setting. 
More precisely, 
the minimal lengths of extremal rays can take any values in 
$\mathbb Q\cap (0,n-1)$. For a related topic, see \cite{fujino-ishitsuka}. 
\end{ex}

Let us construct small contraction morphisms with a 
long extremal ray. 

\begin{ex}\label{f-ex3.3.2}
We fix $N=\mathbb Z^n$ with $n\geq 3$. 
Let $\{v_1, \cdots, v_n\}$ be the standard basis of $N$. 
We put  
\[
v_{n+1}=(\underbrace{a, \cdots, a}_{n-k+1}, 
\underbrace{-1, \cdots, -1}_{k-1})
\] 
with $2\leq k\leq n-1$, where $a$ is any positive integer. 
Let 
$\Sigma^+$ be the fan in $\mathbb R^n$ such that  the set of maximal 
cones of $\Sigma^+$ is
\[
\left\{\left\langle \{v_1, \cdots,  v_{n+1}\}\setminus\{v_i\}\right\rangle
\,\left|\,n-k+2\leq i\leq n+1\right\}\right.. 
\]
Let us consider the smooth toric variety $X^+=X(\Sigma^+)$ associated 
to the fan $\Sigma^+$.
We note that the equality 
\[
v_{n-k+2}+\cdots + v_n+v_{n+1}=av_1+\cdots +av_{n-k+1}
\] 
holds. 
We can get an antiflipping contraction 
$\varphi^+: X^+\to W$, 
that is, a $K_{X^+}$-positive small contraction morphism, 
when 
\[
a>\frac{k}{n-k+1}.
\] 
In this case, we have the following flipping diagram 
\[
\xymatrix{
X\ar[dr]_{\varphi}\ar@{-->}[rr]^\phi&& X^+\ar[dl]^{\varphi^+}\\
&W&
}
\]
By construction, $\varphi:X\to W$ is a flipping contraction whose 
exceptional locus is isomorphic to $\mathbb P^{n-k}$. 
The exceptional locus of $\varphi^+$ is isomorphic 
to $\mathbb P^{k-1}$. Of course, $\varphi$ (resp.~$\varphi^+$) 
contracts $\mathbb P^{n-k}$ (resp.~$\mathbb P^{k-1}$) 
to a point in $W$. We can directly check that 
\[
-K_X\cdot C=n-k+1-\frac{k}{a}
\] 
for every torus invariant curve $C$ in the $\varphi$-exceptional locus 
$\mathbb P^{n-k}$. 
\end{ex} 

Example \ref{f-ex3.3.2} shows that the estimate for small contractions in 
Theorem \ref{f-thm3.2.1} is sharp. 

\begin{rem}\label{f-rem3.3.3}
If $(n,k)=(3,2)$ and $a\geq 2$ in Example \ref{f-ex3.3.2}, 
then $\varphi:X\to W$ is a threefold toric flipping contraction 
whose length of the extremal ray is $\geq 3-2=1$. 
We note that the lengths of extremal rays of three-dimensional 
terminal (not necessarily toric) flipping contractions are less than one. 
\end{rem}


\section{Basepoint-free theorems}\label{f-sec4} 

This section is a supplement to Fujita's freeness conjecture for toric varieties (see  
\cite{fujino-notes}, \cite{fujita}, \cite{laterveer}, \cite{lin}, \cite{mustata}, and 
\cite{payne}) and Fulton's book:~\cite{fulton}. 

\subsection{Variants of Fujita's conjectures for toric varieties}\label{f-subsec4.1} 

One of the most general formulations of 
Fujita's freeness conjecture for 
toric varieties is \cite[Corollary 0.2]{fujino-notes}. 
However, it does not cover the first part of 
\cite[Main theorem A]{lin}. 
So, we give a generalization here with a 
very simple proof. 
It is an easy application of the vanishing theorem (see \cite{fujino-vanishing} and 
\cite{fujino-toric}). 

\begin{thm}[Basepoint-freeness]\label{f-thm4.1.1} 
Let $g:Z\to X$ be a proper toric morphism 
and let $A$ and $B$ be reduced torus invariant Weil divisors 
on $Z$ without common irreducible components. 
Let $f:X\to Y$ be a proper surjective toric morphism 
and let $D$ be an $f$-ample Cartier divisor on $X$.
Then 
\[
R^qg_*(\widetilde 
{\Omega}^a_Z(\log (A+B))(-A))\otimes \mathcal O_X(kD)\] 
is $f$-free, that is, 
\begin{equation*}
f^*f_*(R^qg_*(\widetilde 
{\Omega}^a_Z(\log (A+B))(-A))\otimes \mathcal O_X(kD))  
\to 
R^qg_*(\widetilde 
{\Omega}^a_Z(\log (A+B))(-A))\otimes \mathcal O_X(kD)
\end{equation*}
is surjective 
for every  $a\geq 0$, $q\geq 0$,  and $k\geq \max_{y\in Y}\dim f^{-1}(y) +1$.
\end{thm}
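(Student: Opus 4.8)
The plan is to reduce the statement to a vanishing-type cohomology computation and then apply a standard Castelnuovo--Mumford regularity argument along the fibers of $f$. First I would recall that for toric varieties the sheaves $\widetilde{\Omega}^a_Z(\log(A+B))(-A)$ are the Zariski (reflexive) sheaves of log differentials with poles along $A+B$ twisted down by $A$; the key input is the toric vanishing theorem of \cite{fujino-vanishing} and \cite{fujino-toric}, which gives that for a proper toric morphism $h:Z\to V$ and a Cartier divisor $L$ on $V$ that is ample (or $h'$-ample for an auxiliary $h':V\to U$), one has $R^jh'_*\bigl(R^qg_*(\widetilde{\Omega}^a_Z(\log(A+B))(-A))\otimes \mathcal O_X(mD)\bigr)=0$ for $j>0$ and $m\geq 1$ (more precisely $m\geq 0$, using ampleness). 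So the first step is to set $\mathcal F^{a,q}:=R^qg_*(\widetilde{\Omega}^a_Z(\log(A+B))(-A))$ and record that $\mathcal F^{a,q}\otimes\mathcal O_X(mD)$ has vanishing higher direct images under $f$ for all $m\geq 1$, and likewise vanishing higher cohomology on the fibers.

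Next I would make the freeness statement local on $Y$, so assume $Y$ is affine; then $f$-freeness of a coherent sheaf $\mathcal G$ means precisely that $H^0(X,\mathcal G)$ generates $\mathcal G$, equivalently that for every closed point $y\in Y$ the restriction $H^0(X,\mathcal G)\to H^0(f^{-1}(y),\mathcal G|_{f^{-1}(y)})$ is surjective and $\mathcal G|_{f^{-1}(y)}$ is globally generated. The standard mechanism is Castelnuovo--Mumford regularity with respect to the $f$-ample line bundle $\mathcal O_X(D)$: if $\mathcal G$ is $0$-regular on every fiber $f^{-1}(y)$ with respect to $\mathcal O_X(D)|_{f^{-1}(y)}$, i.e. $H^i(f^{-1}(y),\mathcal G\otimes\mathcal O_X(-iD)|_{f^{-1}(y)})=0$ for all $i>0$, then $\mathcal G$ is globally generated on that fiber and the relevant sections lift. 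Applying this with $\mathcal G=\mathcal F^{a,q}\otimes\mathcal O_X(kD)$, I need $H^i(f^{-1}(y),\mathcal F^{a,q}\otimes\mathcal O_X((k-i)D)|_{f^{-1}(y)})=0$ for $1\leq i\leq \dim f^{-1}(y)$; by the vanishing theorem quoted in the first step this holds as soon as $k-i\geq 1$ for all such $i$, i.e. $k\geq \dim f^{-1}(y)+1$, and taking the maximum over $y\in Y$ gives exactly the hypothesis $k\geq \max_{y\in Y}\dim f^{-1}(y)+1$. Here one also uses that $\mathcal O_X(D)$ restricts to an ample Cartier divisor on each fiber and that $\dim f^{-1}(y)$ bounds the range of possibly nonzero cohomology.

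The main obstacle, and the step deserving the most care, is justifying that the toric vanishing theorem of \cite{fujino-vanishing}, \cite{fujino-toric} applies not to the sheaves $\widetilde{\Omega}^a_Z(\log(A+B))(-A)$ directly but to their higher direct images $R^qg_*$ under the auxiliary toric morphism $g$, and in the relative setting over $Y$ rather than absolutely. I would handle this by composing: consider $f\circ g^{\flat}$-type factorizations, use that $g$ is toric and proper so Grothendieck spectral sequences degenerate in the toric category (all higher direct images and cohomology are computed combinatorially, and the relevant $E_2$ or $E_1$ terms vanish by the basic toric vanishing), and thereby reduce to the vanishing of $H^i$ of $\mathcal F^{a,q}$ twisted by a relatively ample line bundle on the fibers of $f$. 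Once that reduction is in place, the Castelnuovo--Mumford argument is routine. I would also remark that the hypothesis that $A$ and $B$ share no components is exactly what makes $\widetilde{\Omega}^a_Z(\log(A+B))(-A)$ the sheaf to which the log-vanishing theorem applies, and that when $Z=X$, $g=\mathrm{id}$, $a=0$, $A=0$, one recovers $\mathcal O_X(kD)$ being $f$-free for $k\geq\max_y\dim f^{-1}(y)+1$, consistent with Lemma \ref{f-lem2.2.3} and the refinement needed for Theorem \ref{f-thm1.5}.
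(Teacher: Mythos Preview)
Your overall strategy---toric vanishing plus Castelnuovo--Mumford regularity---is exactly what the paper does, but your execution is more roundabout in two respects.

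First, the ``main obstacle'' you identify is not an obstacle. The vanishing theorem cited from \cite{fujino-toric} (stated in the paper as Theorem \ref{f-thm4.1.5}) is already formulated for the higher direct images $R^qg_*(\widetilde{\Omega}^a_Z(\log(A+B))(-A))$ themselves: for any $f$-ample line bundle $L$ on $X$ one has
\[
R^pf_*\bigl(R^qg_*(\widetilde{\Omega}^a_Z(\log(A+B))(-A))\otimes L\bigr)=0\quad\text{for all }p>0.
\]
So no spectral sequence argument, factorization, or ``degeneration in the toric category'' is needed; you simply apply this with $L=\mathcal O_X((k-p)D)$ for $1\le p\le \max_{y}\dim f^{-1}(y)$, and the hypothesis $k\ge \max_{y}\dim f^{-1}(y)+1$ ensures $k-p\ge 1$ so that $(k-p)D$ remains $f$-ample.

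Second, the paper applies the \emph{relative} form of Castelnuovo--Mumford regularity (see \cite[Example 1.8.24]{positive}): if $\mathcal O_X(D)$ is $f$-free (which it is, by Lemma \ref{f-lem2.2.3}) and $R^pf_*(\mathcal G\otimes\mathcal O_X(-pD))=0$ for all $p>0$, then $\mathcal G$ is $f$-free. This avoids your fiber-by-fiber argument entirely. Your approach of restricting to $f^{-1}(y)$ and checking ordinary regularity there would need extra justification---base change for the cohomology, and whether the restriction of $R^qg_*(\widetilde{\Omega}^a_Z(\log(A+B))(-A))$ to a fiber is still a sheaf to which a toric vanishing theorem applies---none of which is required once you work relatively.
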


As a very special case, we can recover the following result. 

\begin{cor}[{cf.~\cite[Main theorem A]{lin}}]\label{f-cor4.1.2} 
Let $X$ be an $n$-dimensional 
projective toric variety and let $D$ be an ample Cartier divisor on $X$. Then
the reflexive sheaf $\mathcal O_X(K_X+(n+1)D)$ is generated
by its global sections.
\end{cor}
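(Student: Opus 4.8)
The plan is to derive Corollary \ref{f-cor4.1.2} as the special case of Theorem \ref{f-thm4.1.1} in which $g = \mathrm{id}_X$, $Y$ is a point, $a = n$, $q = 0$, $A = 0$, and $B$ is the reduced torus invariant divisor on a suitable model. First I would reduce to the case where $X$ is already smooth: take a toric resolution $g\colon Z\to X$, and on $Z$ let $B = \sum B_i$ be the sum of all torus invariant prime divisors, $A = 0$. Then $\widetilde{\Omega}^n_Z(\log B)(-0) = \Omega^n_Z(\log B) \simeq \mathcal O_Z(K_Z + B)$, and by the standard toric computation one has $g_*\mathcal O_Z(K_Z + B) \simeq \mathcal O_X(K_X + B_X)$ where $B_X$ is the sum of torus invariant prime divisors on $X$; more to the point, pushing forward and twisting by the ample $D$ (with $f\colon X\to Y = \mathrm{pt}$ so that $\max_y \dim f^{-1}(y) = n$) we get that $\mathcal O_X(K_X + B_X + (n+1)D)$ is globally generated. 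The remaining task is to pass from $K_X + B_X$ to $K_X$, i.e. to remove the boundary $B_X$.

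To remove the boundary I would argue as follows: on a projective toric variety $X$ with big and nef (here ample) $D$, if $\mathcal O_X(K_X + B_X + mD)$ is globally generated then so is $\mathcal O_X(K_X + mD)$, because $K_X + mD$ is a nef Cartier divisor (for $m$ in the stated range this follows from the same vanishing/freeness machinery, or directly from Lemma \ref{f-lem2.2.3} once one knows nefness) and a nef Cartier divisor on a toric variety is automatically free by Lemma \ref{f-lem2.2.3}. Actually the cleanest route is to bypass $B_X$ entirely: observe that $\mathcal O_X(K_X + (n+1)D)$ is a reflexive sheaf, so global generation can be checked away from a codimension-two locus, hence on the smooth locus, where $K_X$ is the honest canonical divisor; then apply Theorem \ref{f-thm4.1.1} directly with $Z = X^{\mathrm{sm}}$ (or a resolution that is an isomorphism over $X^{\mathrm{sm}}$), $a = n$, $q = 0$, $A = B = 0$, giving that $\mathcal O_X(K_X + (n+1)D)$ is $f$-free with $f\colon X\to \mathrm{Spec}\,k$, and $k = n + 1 = \max_y\dim f^{-1}(y) + 1 = n + 1$. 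This is exactly the assertion.

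The key steps in order: (1) set $f\colon X\to \mathrm{pt}$, so $\max_{y\in Y}\dim f^{-1}(y) = \dim X = n$ and the hypothesis $k \geq \max_y\dim f^{-1}(y) + 1$ becomes $k\geq n+1$, so $k = n+1$ is admissible; (2) choose $g\colon Z\to X$ so that $R^0 g_*(\widetilde{\Omega}^n_Z(\log 0)(0)) = g_*\omega_Z = \mathcal O_X(K_X)$ — either $Z = X$ when $X$ is Gorenstein, or more generally use that $g_*\omega_Z \simeq \mathcal O_X(K_X)$ (the reflexive hull) for any toric resolution, since toric varieties have rational (indeed, at worst rational Gorenstein in the boundary-free computation) behaviour and the pushforward of the dualizing sheaf recovers the reflexive canonical sheaf; (3) invoke Theorem \ref{f-thm4.1.1} with $a = n$, $q = 0$, $A = B = 0$ to conclude $f^*f_*\mathcal O_X(K_X + (n+1)D)\to \mathcal O_X(K_X + (n+1)D)$ is surjective, i.e. global generation.

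The main obstacle is step (2): making sure that $R^0 g_*$ of the relevant log-differential sheaf on the chosen model is exactly the reflexive sheaf $\mathcal O_X(K_X + (n+1)D)$ and not $\mathcal O_X(K_X + (\text{boundary}) + (n+1)D)$. One must either choose the divisors $A, B$ in Theorem \ref{f-thm4.1.1} to be zero (which forces $g$ to be an isomorphism in codimension one onto the smooth locus, after which reflexivity of the target lets one extend the conclusion over all of $X$), or carefully match $\widetilde{\Omega}^n_Z(\log(A+B))(-A)$ with the intended sheaf using $\widetilde{\Omega}^n_Z(\log(A+B)) \simeq \mathcal O_Z(K_Z + A + B)$ and then cancelling $A$ via the $(-A)$ twist, leaving $\mathcal O_Z(K_Z + B)$, whose pushforward contains the unwanted $B$; the fix is precisely to take $B = 0$ as well. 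Everything else is a routine unwinding of definitions and the observation that a reflexive sheaf on a normal variety is globally generated as soon as its restriction to the smooth locus is.
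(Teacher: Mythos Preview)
Your proposal eventually lands on the paper's approach, but you have buried it under unnecessary detours and a misplaced worry. The paper's proof is one sentence: in Theorem~\ref{f-thm4.1.1} take $g=\mathrm{id}_X$, $A=B=0$, $a=\dim X=n$, $q=0$, and $Y$ a point; then $k=n+1$ satisfies $k\geq \max_{y}\dim f^{-1}(y)+1=n+1$, and the sheaf in question is $\widetilde{\Omega}^n_X\otimes\mathcal O_X((n+1)D)$.

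The ``main obstacle'' you flag in step~(2) is not an obstacle at all, and your hedge ``either $Z=X$ when $X$ is Gorenstein'' is misleading. By Definition~\ref{f-def4.1.4}, for \emph{any} toric variety $X$ (with no Gorenstein hypothesis) and $A=B=0$, the sheaf $\widetilde{\Omega}^n_X$ is by construction $\iota_*\Omega^n_W$ for $W$ the smooth locus, which is exactly the reflexive sheaf $\mathcal O_X(K_X)$. Tensoring by the line bundle $\mathcal O_X((n+1)D)$ then gives $\mathcal O_X(K_X+(n+1)D)$ on the nose. There is no need for a resolution, no boundary $B_X$ to remove, no appeal to Lemma~\ref{f-lem2.2.3}, and no separate argument on the smooth locus followed by a reflexivity extension. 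All of your preliminary routes (resolution with full toric boundary, then stripping $B_X$; checking on $X^{\mathrm{sm}}$ and extending) can be deleted.
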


\begin{proof}
In Theorem \ref{f-thm4.1.1}, 
we assume that 
$g:Z\to X$ is the identity, $A=B=0$, 
$a=\dim X$, $q=0$, and $Y$ is a point. 
Then we obtain the desired statement. 
\end{proof}

\begin{ex}\label{f-ex4.1.3}
Let us consider $X=\mathbb P(1, 1, 1, 2)$. 
Let $P$ be the unique $\frac{1}{2}(1, 1, 1)$-singular point of $X$ 
and let $D$ be an ample Cartier divisor on $X$. 
We can find a torus invariant curve $C$ on $X$ such 
that \[K_X\cdot C\in \frac{1}{2}\mathbb Z\setminus \mathbb Z. \]
Therefore, for every effective Weil divisor $E$ on $X$ such that 
$E\sim K_X+4D$, 
we have $P\in \xSupp E$. 
On the other hand, by Corollary \ref{f-cor4.1.2}, 
the reflexive sheaf $\mathcal O_X(K_X+4D)$ is generated by its 
global sections. 
\end{ex}

Before proving Theorem \ref{f-thm4.1.1}, let us recall the definition of 
the reflexive sheaf $\widetilde {\Omega}^a_{X}(\log (A+B))(-A)$ and 
the vanishing theorem 
in \cite{fujino-toric}. 

\begin{defn}\label{f-def4.1.4}
Let $W$ be any Zariski open set of $Z$ such that $W$ is smooth and 
$\codim _Z(Z\setminus W)\geq 2$. 
In this case,  $A+B$ is a simple normal crossing divisor on 
$W$. On this assumption, 
$\Omega^a_{W}(\log (A+B))$ 
is a well-defined locally free sheaf on $W$. 
Let $\iota:W\hookrightarrow Z$ be the natural 
open immersion. 
Then we put \[\widetilde \Omega^a_Z(\log(A+ B))(-A)=
\iota_*(\Omega^a_W(\log (A+B))\otimes \mathcal O_W(-A))\] for every  
$a\geq 0$. 
It is easy to see that the reflexive sheaf 
\[\widetilde \Omega^a_{Z}(\log (A+B))(-A)\] on $Z$ does not 
depend on the choice of $W$. 
\end{defn}

The next theorem is one of the vanishing 
theorems obtained in \cite{fujino-toric}. 
For the proof and other vanishing theorems, 
see \cite{fujino-vanishing} and \cite{fujino-toric}. 

\begin{thm}[{\cite[Theorem 4.3]{fujino-toric}}]\label{f-thm4.1.5}
Let $g:Z\to X$ be a proper toric morphism and let $A$ and $B$ be reduced 
torus invariant Weil divisors on $Z$ without 
common irreducible components. 
Let $f:X\to Y$ be a proper surjective toric morphism 
and let $L$ be an $f$-ample line bundle on $X$.
Then 
\[
R^pf_*(
R^qg_*(\widetilde 
{\Omega}^a_Z(\log (A+B))(-A))\otimes L)=0\] 
for every 
$p>0$, $q\geq 0$, and $a\geq 0$. 
\end{thm}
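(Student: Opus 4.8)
The statement to establish is that $R^pf_*\bigl(R^qg_*(\widetilde{\Omega}^a_Z(\log(A+B))(-A))\otimes L\bigr)=0$ for all $p>0$, $q\geq 0$, $a\geq 0$, whenever $g\colon Z\to X$ and $f\colon X\to Y$ are proper toric morphisms and $L$ is $f$-ample. Since this is quoted verbatim from \cite[Theorem 4.3]{fujino-toric}, the reasonable plan is to reduce it to the vanishing theorems already developed in \cite{fujino-vanishing} and \cite{fujino-toric} rather than to reprove everything from scratch. The key structural input is that the sheaves $\widetilde{\Omega}^a_Z(\log(A+B))(-A)$ are, in the toric setting, explicit combinatorially described reflexive sheaves which decompose along the torus action; their higher direct images under a toric morphism are again of a controlled type. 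Concretely, I would first record the local freeness of $\Omega^a_W(\log(A+B))\otimes\mathcal O_W(-A)$ on the smooth locus $W$ (Definition \ref{f-def4.1.4}) and the fact that the complex $\widetilde{\Omega}^{\bullet}_Z(\log(A+B))(-A)$ sits inside the de Rham / logarithmic framework for which the toric Bott-type vanishing and the Kawamata--Viehweg-type vanishing of \cite{fujino-vanishing} apply.

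The main steps I anticipate are the following. First, I would reduce to the case $q=0$: using the Leray-type / Grothendieck spectral sequence for the composite $f\circ g$ together with the fact that $L$ is $f$-ample (hence $g^*$ of nothing is needed — $L$ lives on $X$), one wants to peel off the $R^qg_*$. This is the standard trick: apply the absolute statement on $Z$ with the pulled-back ample-type data, and feed it into the spectral sequence $R^pf_*(R^qg_*(\mathcal F)\otimes L)\Rightarrow R^{p+q}(f\circ g)_*(\mathcal F\otimes g^*L')$-style argument, though here one must be careful because $L$ is only on $X$, not pulled back from $Y$; the resolution is that the target statement is relative over $Y$ and $g^*$ does not enter, so one instead argues that $R^qg_*(\widetilde{\Omega}^a_Z(\log(A+B))(-A))$ is itself a direct sum of sheaves of the form $\widetilde{\Omega}^{a'}_X(\log(A'+B'))(-A')$ on $X$ (this is the toric decomposition computing higher direct images combinatorially), reducing immediately to $Z=X$, $g=\mathrm{id}$. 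Second, with $g=\mathrm{id}$, the statement becomes $R^pf_*(\widetilde{\Omega}^a_X(\log(A+B))(-A)\otimes L)=0$ for $p>0$, which is precisely the relative version of the toric vanishing theorem; here I would invoke the relative Kawamata--Viehweg--Nadel-type vanishing for toric varieties established in \cite{fujino-vanishing}, applied to the reflexive logarithmic differential sheaf, using that $L$ is $f$-ample and that the toric structure forces all the relevant cohomology to be computed by a combinatorial (Ishida) complex that is exact in positive degrees after twisting by an ample bundle.

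The hard part will be Step one — making precise the claim that $R^qg_*$ of a reflexive toric log-differential sheaf is again a finite direct sum of such sheaves on the base, so that the theorem for $(Z,g)$ genuinely reduces to the theorem for $(X,\mathrm{id})$. This requires the local toric computation of higher direct images (via the fan map and the associated spectral sequence of the affine pieces, i.e.\ computing sheaf cohomology of $\widetilde{\Omega}^a$ on the toric fibers), which is exactly the content developed in \cite{fujino-toric} and which is genuinely technical: one must track the weight-space decomposition and verify that each graded piece that survives is again an admissible reflexive log-differential form on $X$. Granting that reduction, Step two is then a routine application of the already-proven relative toric vanishing theorem of \cite{fujino-vanishing}, since $f$-ampleness of $L$ is the only positivity hypothesis needed and the toric setting bypasses the usual characteristic-zero obstructions. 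I would therefore present the proof as: (i) cite the structure theorem for $R^qg_*$ of toric log-differentials to reduce to $g=\mathrm{id}$; (ii) cite/apply the relative toric Kawamata--Viehweg vanishing to conclude; and I would flag (i) as the step carrying all the real weight.
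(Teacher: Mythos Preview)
The paper does not give its own proof of Theorem~\ref{f-thm4.1.5}: it is quoted as \cite[Theorem 4.3]{fujino-toric}, and the surrounding text explicitly says ``For the proof and other vanishing theorems, see \cite{fujino-vanishing} and \cite{fujino-toric}.'' So there is nothing to compare your proposal against---the paper treats this result as a black box imported from the earlier paper.

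That said, a brief comment on your plan. Your Step~(ii) is fine: once one is reduced to $g=\mathrm{id}$, the relative toric Bott-type vanishing from \cite{fujino-vanishing} and \cite{fujino-toric} does the job. Your Step~(i), however, is stated in a form that is stronger than what is actually proved in \cite{fujino-toric} and is not obviously true as written: it is not the case in general that $R^qg_*\bigl(\widetilde\Omega^a_Z(\log(A+B))(-A)\bigr)$ is literally a finite direct sum of sheaves of the same shape $\widetilde\Omega^{a'}_X(\log(A'+B'))(-A')$ on $X$. The actual argument in \cite{fujino-toric} works with the Ishida--type complexes and weight decompositions directly, and the vanishing is obtained by analyzing those complexes rather than by a clean ``closure under $R^qg_*$'' statement of the kind you propose. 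You correctly flag (i) as carrying all the weight, but as formulated it is a hope rather than a lemma; if you want to reconstruct the proof you should follow the Ishida-complex route in \cite{fujino-toric} instead.
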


Let us prove Theorem \ref{f-thm4.1.1}. 

\begin{proof}[Proof of Theorem \ref{f-thm4.1.1}]
By the vanishing theorem:~Theorem \ref{f-thm4.1.5},
we have 
\[R^pf_*(
R^qg_*(\widetilde 
{\Omega}^a_Z(\log (A+B))(-A))\otimes \mathcal O_X((k-p)D))=0
\]
for every $p>0$, $q\geq 0$, $a\geq 0$, and
$k\geq \max_{y\in Y}\dim f^{-1}(y)+1$.
Since $\mathcal O_X(D)$ is $f$-free, we obtain that 
\[
R^qg_*(\widetilde 
{\Omega}^a_Z(\log (A+B))(-A))\otimes \mathcal O_X(kD)\] 
is $f$-free  
by the Castelnuovo--Mumford 
regularity (see, for example, \cite[Example 1.8.24]{positive}).
\end{proof}

Here, we treat some generalizations of 
Fujita's very ampleness for toric 
varieties as applications of Theorem \ref{f-thm4.1.5}. 
For the details of Fujita's very ampleness for toric 
varieties, see \cite{payne}. 

\begin{thm}\label{f-thm4.1.6} 
Let $f:X\to Y$ be a proper surjective toric 
morphism, let $\Delta$ be a reduced torus invariant 
divisor on $X$ such that $K_X+\Delta$ is Cartier, 
and let $D$ be an $f$-ample Cartier divisor on $X$. 
Then $\mathcal O_X(K_X+\Delta+kD)$ is $f$-very ample for 
every $k\geq \max_{y\in Y} \dim f^{-1}(y) +2$.  
\end{thm}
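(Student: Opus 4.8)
The plan is to deduce $f$-very ampleness from $f$-freeness of a suitable twist together with the vanishing theorem, using the standard criterion that $\mathcal L$ is $f$-very ample if it is $f$-free and separates points and tangent directions along fibers, which for toric varieties can be checked combinatorially via the associated polytopes (cf.\ \cite{payne}). First I would reduce, as in the proof of Theorem \ref{f-thm4.1.1}, to controlling the higher direct images: by Theorem \ref{f-thm4.1.5} applied with $Z=X$, $g=\mathrm{id}$, $a=\dim X=n$, $q=0$, and $A=\Delta$, $B=0$, one has $\widetilde\Omega^n_X(\log\Delta)(-\Delta)\simeq\mathcal O_X(K_X+\Delta)$ (this identification holds on the smooth locus and both sides are reflexive), so $\mathcal O_X(K_X+\Delta+kD)$ is $f$-free for every $k\geq\max_{y\in Y}\dim f^{-1}(y)+1$ by Theorem \ref{f-thm4.1.1}. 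This handles the freeness for $k\geq \max_y\dim f^{-1}(y)+2$ for free, leaving the separation statements.

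For the separation properties I would argue locally over $Y$, so that $Y$ is affine and $X$ is projective over $Y$ with $m:=\max_{y\in Y}\dim f^{-1}(y)$ being the relative dimension. The key step is a Castelnuovo--Mumford-type regularity argument for relative very ampleness: it suffices to show that for the twist $\mathcal F_k:=\mathcal O_X(K_X+\Delta+kD)$ one has, on each fiber $X_y$, that $\mathcal F_k|_{X_y}$ separates points and tangent directions. Using the vanishing $R^pf_*(\mathcal O_X(K_X+\Delta+(k-p)D))=0$ for $p>0$ and $k\geq m+1$ (Theorem \ref{f-thm4.1.5}), a standard cohomological argument (the relative version of the statement that a globally generated line bundle $L$ with $H^1(\mathcal I_{p\cup q}\otimes L)=H^1(\mathcal I_{2p}\otimes L)=0$ is very ample) shows that tensoring the $f$-free sheaf $\mathcal F_{m+1}$ by one further copy of $D$ yields relative very ampleness. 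Concretely, for any subscheme $\xi\subset X_y$ of length $\leq 2$ one needs $R^1f_*(\mathcal I_\xi\otimes\mathcal F_{k})=0$ for $k\geq m+2$; this follows by writing the ideal sheaf sequence and invoking both the vanishing of $R^1f_*\mathcal F_{k}$ and the vanishing of $R^1$ of the twist of $\mathcal O_\xi$, which is supported in dimension $0$.

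Alternatively—and this is probably the cleaner route for a toric paper—I would give a direct combinatorial proof: locally over $Y$, $K_X+\Delta$ corresponds to the interior-of-a-polytope construction and $\mathcal O_X(K_X+\Delta+kD)$ corresponds to the polytope $P_{K_X+\Delta}+kP_D$, and one checks that for $k\geq m+2$ this polytope is \emph{normal} (in the relative sense) and that its lattice points separate the torus fixed points and tangent directions, using that $P_D$ is a lattice polytope of relative dimension $m$ and invoking the description of very ampleness for toric morphisms in \cite{payne}. The main obstacle I anticipate is getting the \emph{sharp} constant $m+2$ rather than something weaker: the freeness input gives $m+1$, and one must argue that exactly one extra twist by $D$ upgrades relative global generation to relative very ampleness without losing an additional unit—this requires either the careful Castelnuovo--Mumford bookkeeping above or a precise normality statement for Cayley-type polytope sums, and is where the proof genuinely uses the full strength of Theorem \ref{f-thm4.1.5} rather than just Theorem \ref{f-thm4.1.1}.
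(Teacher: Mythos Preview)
Your overall strategy---deduce very ampleness from the vanishing theorem (Theorem~\ref{f-thm4.1.5}) via Castelnuovo--Mumford regularity---is exactly what the paper does; its entire proof is a one-line appeal to \cite[Example~1.8.22]{positive}. However, your explanation of \emph{how} regularity yields very ampleness contains a genuine gap. You claim that $R^1f_*(\mathcal I_\xi\otimes\mathcal F_k)=0$ follows from the ideal-sheaf sequence together with $R^1f_*\mathcal F_k=0$ and the $0$-dimensionality of $\xi$, but the long exact sequence only gives
\[
f_*\mathcal F_k \longrightarrow f_*(\mathcal O_\xi\otimes\mathcal F_k) \longrightarrow R^1f_*(\mathcal I_\xi\otimes\mathcal F_k) \longrightarrow R^1f_*\mathcal F_k=0,
\]
so the vanishing you want is \emph{equivalent} to surjectivity of the first map, which is precisely the separation statement you are trying to prove. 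The argument as written is circular; Theorem~\ref{f-thm4.1.5} says nothing about sheaves of the form $\mathcal I_\xi\otimes\mathcal F_k$.

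The actual mechanism in \cite[Example~1.8.22]{positive} goes one step further. With $B=\mathcal O_X(D)$ ($f$-free by Lemma~\ref{f-lem2.2.3}) and $L=\mathcal F_k$, the hypothesis $k\geq m+2$ makes $L\otimes B^{-1}$ itself $0$-regular, hence $f$-free. Feeding \emph{that} freeness into the sequence $0\to\mathcal I_x\otimes L\otimes B^{-i}\to L\otimes B^{-i}\to\mathcal O_x\to 0$ shows $\mathcal I_x\otimes L$ is $0$-regular for every closed point $x$ of a fibre; its global generation then separates both points and tangent vectors for $L$. The extra twist by $D$ is spent exactly here---to make $L\otimes B^{-1}$, not merely $L$, $0$-regular---which is the bookkeeping you flag at the end but do not actually carry out. (Minor correction: with $A=\Delta$, $B=0$ one gets $\widetilde\Omega^n_X(\log\Delta)(-\Delta)\simeq\mathcal O_X(K_X)$, not $\mathcal O_X(K_X+\Delta)$; you want $A=0$, $B=\Delta$.)
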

\begin{proof}
It follows from the Castelnuovo--Mumford 
regularity by the vanishing theorem:~Theorem 
\ref{f-thm4.1.5}. For the details, 
see \cite[Example 1.8.22]{positive}.
\end{proof}

The following corollary is a special case of the above theorem. 

\begin{cor}[{cf.~\cite[Main Theorem B]{lin}}]\label{f-cor4.1.7}  
Let $X$ be an $n$-dimensional projective Gorenstein toric variety 
and let $D$ be an ample Cartier 
divisor on $X$. 
Then $\mathcal O_X(K_X+(n+2)D)$ is very ample. 
\end{cor}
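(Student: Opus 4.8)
The plan is to derive this as an immediate special case of Theorem \ref{f-thm4.1.6}. First I would take $Y$ to be a point, so that $f:X\to Y$ is the structure morphism of the $n$-dimensional projective toric variety $X$; then $\max_{y\in Y}\dim f^{-1}(y)=n$, and "$f$-very ample" is just "very ample". Next, since $X$ is Gorenstein, the canonical divisor $K_X$ is Cartier, so I would apply Theorem \ref{f-thm4.1.6} with the choice $\Delta=0$: the reduced torus invariant divisor $\Delta=0$ trivially satisfies the hypothesis that $K_X+\Delta=K_X$ is Cartier. With $D$ the given ample Cartier divisor, Theorem \ref{f-thm4.1.6} then gives that $\mathcal O_X(K_X+\Delta+kD)=\mathcal O_X(K_X+kD)$ is very ample for every $k\geq \max_{y\in Y}\dim f^{-1}(y)+2=n+2$. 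Specializing to $k=n+2$ yields the claim.

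The only point that requires a word of care is the choice $\Delta=0$: Theorem \ref{f-thm4.1.6} is stated for "a reduced torus invariant divisor $\Delta$", and the zero divisor is a (degenerate) instance of this. One could alternatively phrase the proof so as to avoid $\Delta=0$ by directly invoking the Castelnuovo--Mumford regularity argument behind Theorem \ref{f-thm4.1.6} — namely, apply Theorem \ref{f-thm4.1.5} with $g=\mathrm{id}$, $Z=X$, $A=B=0$, $a=\dim X$, and $L=\mathcal O_X(mD)$ for suitable $m$, to get $H^p(X,\mathcal O_X(K_X+mD))=0$ for $p>0$ and $m\geq 1$, and then conclude very ampleness from \cite[Example 1.8.22]{positive}. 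But the cleanest route is simply to cite Theorem \ref{f-thm4.1.6} with $\Delta=0$.

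I do not anticipate any genuine obstacle here: the corollary is by design a transparent consequence of the preceding general theorem, and the work has already been done in establishing Theorem \ref{f-thm4.1.6} (which itself rests on the vanishing theorem \ref{f-thm4.1.5} and Castelnuovo--Mumford regularity). The mild subtlety, if any, is just making sure the degenerate case $\Delta=0$ is admissible in the statement of Theorem \ref{f-thm4.1.6}, which it is.
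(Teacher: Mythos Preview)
Your proposal is correct and matches the paper's approach exactly: the paper simply states that Corollary \ref{f-cor4.1.7} is a special case of Theorem \ref{f-thm4.1.6}, and you have supplied precisely the intended specialization ($Y$ a point, $\Delta=0$, $k=n+2$). There is nothing to add.
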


We think that the following theorem has not been 
stated explicitly 
in the literature. 

\begin{thm}[Very ampleness]\label{f-thm4.1.8}  
Let $g:Z\to X$ be a proper toric morphism 
and let $A$ and $B$ be reduced torus invariant Weil divisors 
on $Z$ without common irreducible components. 
Let $f:X\to Y$ be a proper surjective 
toric morphism and 
let $D$ be an $f$-ample 
Cartier divisor on $X$. 
Assume that $
R^qg_*(\widetilde 
{\Omega}^a_Z(\log (A+B))(-A))
$ is locally 
free. 
Then \[R^qg_*(\widetilde 
{\Omega}^a_Z(\log (A+B))(-A))\otimes \mathcal O_X(kD)\] is $f$-very ample 
for every $k\geq \max _{y\in Y} \dim 
f^{-1}(y) +2$.  
\end{thm}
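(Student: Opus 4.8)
The plan is to imitate the proof of Theorem \ref{f-thm4.1.6} and reduce everything to the Castelnuovo--Mumford regularity criterion for global generation of jets / very ampleness, fed by the vanishing theorem Theorem \ref{f-thm4.1.5}. First I would set $\mathcal F:=R^qg_*(\widetilde{\Omega}^a_Z(\log(A+B))(-A))$, which is a coherent sheaf on $X$ that we are assuming to be locally free, and write $m:=\max_{y\in Y}\dim f^{-1}(y)$. The key input is that for the $f$-ample Cartier divisor $D$ on $X$ one has, by Theorem \ref{f-thm4.1.5} applied to the $f$-ample line bundle $\mathcal O_X(jD)$ for each $j\ge 1$,
\begin{equation*}
R^pf_*\bigl(\mathcal F\otimes \mathcal O_X(jD)\bigr)=0\quad\text{for all }p>0,\ j\ge 1 .
\end{equation*}
Thus $\mathcal F$ is $0$-regular with respect to $\mathcal O_X(D)$ in the relative sense once we twist by $(m+1)D$: setting $\mathcal G:=\mathcal F\otimes \mathcal O_X((m+1)D)$ we get $R^pf_*(\mathcal G\otimes\mathcal O_X((-p)D))=0$ for $p>0$ (here $p\le m$ forces $(m+1-p)D$ to remain in the range $j\ge1$), which is precisely relative Castelnuovo--Mumford $0$-regularity.

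Next I would recall that relative $0$-regularity has two standard consequences: (i) the sheaf $\mathcal G$ is $f$-globally generated, i.e.\ $f^*f_*\mathcal G\to\mathcal G$ is surjective (this is already the content of Theorem \ref{f-thm4.1.1}); and (ii) more is true — for a $0$-regular sheaf twisted by one further copy of the ample line bundle, the natural map $f_*\mathcal G\otimes f_*\mathcal O_X(D)\to f_*(\mathcal G\otimes\mathcal O_X(D))$ is surjective, and iterating, the section algebra is generated in the expected degrees, which gives that $\mathcal G\otimes\mathcal O_X(D)=\mathcal F\otimes\mathcal O_X((m+2)D)$ separates jets along fibres whenever $\mathcal F$ is locally free; hence it is $f$-very ample. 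This is exactly the mechanism invoked in the proof of Theorem \ref{f-thm4.1.6}, where the reference \cite[Example 1.8.22]{positive} is cited; I would cite the same place. The local freeness hypothesis on $\mathcal F$ is what makes step (ii) legitimate — very ampleness is about embedding, so one needs a genuine vector bundle (a locally free sheaf), not merely a reflexive one, for the Castelnuovo--Mumford machinery to yield separation of points and tangent vectors on the fibres.

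Concretely the proof is short: reduce to the case where $Y$ is affine (very ampleness is local on $Y$, and $f_*$ of everything in sight is then just global sections of an affine), observe that $D$ restricts to an ample Cartier divisor on each fibre, and apply the absolute statement fibrewise after noting that the vanishing $R^pf_*(\mathcal F\otimes\mathcal O_X(jD))=0$ together with cohomology-and-base-change (valid since the higher direct images vanish and $\mathcal F$ is locally free, hence flat over $Y$) lets us compare $f_*$ with fibrewise global sections. Then Theorem \ref{f-thm4.1.5} supplies the fibrewise cohomology vanishing $H^p(X_y,\mathcal F|_{X_y}\otimes\mathcal O_{X_y}(jD))=0$ for $p>0$, $j\ge1$, $\dim X_y\le m$, and \cite[Example 1.8.22]{positive} concludes that $\mathcal F|_{X_y}\otimes\mathcal O_{X_y}(kD)$ is very ample for $k\ge m+2$; since this holds for every $y$ and the $f_*$ commutes with base change, $\mathcal F\otimes\mathcal O_X(kD)$ is $f$-very ample.

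The main obstacle I anticipate is bookkeeping around the relative versus absolute regularity statement: making precise that $0$-regularity of $\mathcal F\otimes\mathcal O_X((m+1)D)$ relative to $f$ and $\mathcal O_X(D)$ implies $f$-very ampleness of the next twist requires the sheaf to be $f$-flat and the higher direct images to vanish so that formation of $f_*$ commutes with restriction to fibres — which is exactly why we assume $R^qg_*(\widetilde{\Omega}^a_Z(\log(A+B))(-A))$ locally free. Once that is granted, everything is formal from Theorem \ref{f-thm4.1.5} and the cited Castelnuovo--Mumford regularity facts, so I would keep the written proof to essentially two sentences mirroring the proof of Theorem \ref{f-thm4.1.6}.
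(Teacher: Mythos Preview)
Your proposal is correct and follows essentially the same approach as the paper: apply the vanishing theorem (Theorem \ref{f-thm4.1.5}) to obtain the needed $R^pf_*$-vanishing, then invoke Castelnuovo--Mumford regularity as in \cite[Example 1.8.22]{positive}, with the local freeness hypothesis being exactly what makes the very-ampleness conclusion go through. The paper's own proof is a one-line reference to the proof of Theorem \ref{f-thm4.1.6} plus the local freeness assumption, so your more detailed unpacking (relative regularity, base change) is a faithful elaboration of the same argument.
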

\begin{proof}
The proof of Theorem \ref{f-thm4.1.6} works 
for this theorem since  \[
R^qg_*(\widetilde 
{\Omega}^a_Z(\log (A+B))(-A))
\] 
is locally free by assumption (see \cite[Example 1.8.22]{positive}). 
\end{proof}

\subsection{Lin's problem}\label{f-subsec4.2}
In this subsection, we treat Lin's problem raised in \cite{lin}. 
In \cite[Lemma 4.3]{lin}, she claimed the following lemma, 
which is an exercise in \cite[p.90]{fulton}, without 
proof. 

\begin{lem}\label{f-lem4.2.1} 
Let $X$ be a complete Gorenstein toric variety and let $D$ be an ample 
$($Cartier$)$ divisor. If $\Gamma (X, K+D)\ne 0$ then 
$K+D$ is generated by its global sections. 
In fact, $P_{K+D}$ is the convex hull of $\Int P_D\cap M$. 
\end{lem}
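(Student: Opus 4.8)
The plan is to work entirely on the level of lattice polytopes. Recall that since $X$ is a complete Gorenstein toric variety and $D$ is an ample Cartier divisor, $D$ corresponds to a lattice polytope $P_D\subset M_{\mathbb R}$ whose normal fan is the fan $\Sigma$ of $X$, and $K_X$ corresponds (as a torus invariant divisor $-\sum V(v_i)$) to data that translates to: the polytope $P_{K_X+D}$ is obtained from $P_D$ by moving each facet inward by lattice distance one. Concretely, if $P_D=\{u\in M_{\mathbb R}\mid \langle u,v_i\rangle\ge -a_i \text{ for all rays } v_i\}$ with $a_i\in\mathbb Z$ (Gorenstein-ness of $K_X$ is what makes the relevant shifts integral), then $P_{K_X+D}=\{u\mid \langle u,v_i\rangle\ge -a_i+1\}$. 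First I would record this description and the standard fact that $H^0(X,\mathcal O_X(K_X+D))$ has a basis indexed by $P_{K_X+D}\cap M$, and that $K_X+D$ is globally generated if and only if $P_{K_X+D}$ is the convex hull of its lattice points and moreover for every vertex $v$ of $P_D$ (equivalently every maximal cone $\sigma$) the polytope $P_{K_X+D}$ has a lattice point $u_\sigma$ with $\langle u_\sigma,v_i\rangle=-a_i+1$ for the rays $v_i$ of $\sigma$; in toric language this is the statement that the global sections separate the torus fixed points.

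The key observation is that $\Int P_D\cap M$ (the interior lattice points of $P_D$) is exactly $\{u\in M\mid \langle u,v_i\rangle>-a_i \text{ for all } i\}=\{u\in M\mid \langle u,v_i\rangle\ge -a_i+1\}=P_{K_X+D}\cap M$, since the $a_i$ and $\langle u,v_i\rangle$ are integers. So the hypothesis $\Gamma(X,K_X+D)\ne 0$ says precisely that $\Int P_D$ contains a lattice point, and the claimed equality $P_{K+D}=\operatorname{conv}(\Int P_D\cap M)$ is the assertion that $P_{K_X+D}$ equals the convex hull of its own lattice points, i.e.\ that $P_{K_X+D}$ is a lattice polytope (its vertices are lattice points), plus — for the base-point-freeness conclusion — that each vertex of $P_{K_X+D}$ is a lattice point sitting in the right corner. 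Thus everything reduces to: \emph{if the inward facet-shift $P_{K_X+D}$ of the lattice polytope $P_D$ is nonempty, then it is a lattice polytope with the expected vertex at each cone $\sigma$.}

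To prove that, I would argue cone by cone. Fix a maximal cone $\sigma\in\Sigma$ with rays $v_{i_1},\dots,v_{i_n}$; the corresponding vertex of $P_D$ is the unique $w_\sigma\in M$ (a lattice point, by Cartierness of $D$) with $\langle w_\sigma,v_{i_j}\rangle=-a_{i_j}$ for all $j$. Since $\sigma$ is a smooth cone? — no, $X$ need not be smooth, only Gorenstein — so instead I use that the dual vectors $v_{i_j}^*$ defined by $\langle v_{i_j}^*,v_{i_k}\rangle=\delta_{jk}$ need not lie in $M$. The candidate corner of $P_{K_X+D}$ at $\sigma$ is $w_\sigma+\sum_j v_{i_j}^*$, which lies in $P_{K_X+D}$ but a priori only in $M_{\mathbb Q}$. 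Here is where the hypothesis must be used: pick an interior lattice point $u_0\in\Int P_D\cap M=P_{K_X+D}\cap M$. I would then show, using that $P_{K_X+D}$ is cut out near the $\sigma$-corner by the same supporting halfspaces as $P_D$ and that $u_0$ satisfies all those inequalities strictly in $P_D$, that the lattice point of $P_{K_X+D}$ minimizing the linear functional defining the $\sigma$-corner is forced to be the expected lattice corner — equivalently, that $w_\sigma+\sum_j v_{i_j}^*$ is already in $M$. The cleanest route is probably to invoke the reflexivity/Gorenstein combinatorics directly: Gorenstein means $-K_X$ is Cartier, so $\sum_j$ of the primitive ray generators' dual data is integral at every cone, which gives that $w_\sigma+\sum_j v_{i_j}^*\in M$ outright; the hypothesis $\Gamma\ne 0$ is then only needed to guarantee $P_{K_X+D}$ is nonempty so that these corners actually are its vertices. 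The main obstacle I anticipate is exactly this point — making rigorous that Gorenstein-ness (rather than smoothness) suffices for the corners of the shifted polytope to be lattice points, and handling the degenerate possibility that $P_{K_X+D}$ is lower-dimensional, where "convex hull of interior lattice points of $P_D$" must be interpreted and checked to still coincide with the section polytope.
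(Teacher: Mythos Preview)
The statement you are trying to prove is \emph{false}; the paper itself records this and gives the counterexample in Example~\ref{f-ex4.2.2}. So the issue is not one of approach but of a genuine gap.

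The gap is in the sentence ``the hypothesis $\Gamma\ne 0$ is then only needed to guarantee $P_{K_X+D}$ is nonempty so that these corners actually are its vertices.'' You have correctly argued, using the Gorenstein hypothesis, that for each maximal cone $\sigma$ the candidate corner $w_\sigma-m_\sigma$ (in your notation $w_\sigma+\sum_j v_{i_j}^*$) lies in $M$. What does \emph{not} follow from nonemptiness is that this lattice point lies in $P_{K_X+D}$. When you push all facets of $P_D$ inward by $1$, some of the supporting hyperplanes may become redundant: they no longer meet the shrunken polytope, and the corresponding candidate corner sits strictly outside $P_{K_X+D}$. For such $\sigma$ there is no section meeting the required equalities, so the torus-fixed point $x_\sigma$ is a base point of $|K_X+D|$. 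Thus the equivalence you assert between ``$P_{K_X+D}$ is the convex hull of its lattice points'' and ``$K_X+D$ is globally generated'' breaks down; the second condition is strictly stronger.

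Concretely, take $n=3$ in Example~\ref{f-ex4.2.2}: $X=\mathrm{Bl}_P\mathbb P^3$ with exceptional ray $e_5=e_1+e_2+e_3$, and $D=f^*(-K_{\mathbb P^3})-E$. Then $K_X+D\sim E$, and the polyhedron $\{u:\langle u,v_i\rangle\ge -b_i\}$ is the single point $\{0\}$, which is indeed the convex hull of $\Int P_D\cap M$. But for the maximal cone $\sigma=\langle e_1,e_2,e_5\rangle$ the candidate corner is $(0,0,-1)$, a lattice point as your Gorenstein argument predicts, yet it violates $u_3\ge 0$ and so lies outside $P_{K_X+D}$. Hence $|K_X+D|$ has a base point at the corresponding fixed point on $E$, as it must since $E\cdot C<0$ for curves $C\subset E$. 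The paper's corrected version (Corollary~\ref{f-cor4.2.4}) replaces $D$ by $(n-1)D$, and the proof there is not combinatorial but goes through the length estimate of Theorem~\ref{f-thm3.2.1}.
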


Sam Payne pointed out that Lemma \ref{f-lem4.2.1} does not seem 
to have a known valid proof (see \cite[p.500 Added in proof]{lin}). 
Unfortunately, the following elementary 
example is a counterexample to 
Lemma \ref{f-lem4.2.1}. 
So, Lemma \ref{f-lem4.2.1} is NOT true. 
Therefore, the alternative proof of Theorem A in \cite{lin} 
does not work. 

\begin{ex}\label{f-ex4.2.2}
Let $Y=\mathbb P^n$ and let $P\in Y$ be a torus invariant 
closed point. Let $f:X\to Y$ be the blow-up 
at $P$. 
We put $B=\sum _{i=1} ^{n+1}B_i$, where $B_i$ is a torus 
invariant prime divisor on $Y$ for every $i$. 
Then it is well known that 
$\mathcal O_Y(K_Y)\simeq \mathcal O_Y(-B)$. 
We define $D=f^*B-E$, where $E$ is 
the exceptional divisor of $f$. 
In this case, we have \[K_X=f^*K_Y+(n-1)E\] and 
it is not difficult to see that $D$ is ample. 
Therefore, \[K_X+(n-1)D=f^*(K_Y+(n-1)B)\] is nef, 
that is, $\mathcal O_X(K_X+(n-1)D)$ is generated 
by its global sections. 
We note that $H^0(X, \mathcal O_X(K_X+aD))\ne 0$ for 
every positive integer $a$. However, $K_X+aD$ is not nef for 
any real number $a<n-1$. In particular, 
$H^0(X, \mathcal O_X(K_X+D))\ne 0$ but 
$\mathcal O_X(K_X+D)$ is not generated by its global 
sections. 
\end{ex}

The following theorem is the main theorem of this section. 
It follows from Theorem \ref{f-thm3.2.1}. 

\begin{thm}[see Theorem \ref{f-thm1.1}]\label{f-thm4.2.3}
Let $X$ be a $\mathbb Q$-Gorenstein 
projective toric $n$-fold and let $D$ be an ample Cartier divisor on $X$. 
Then $K_X+(n-1)D$ is pseudo-effective if and only if $K_X+(n-1)D$ is 
nef. 
\end{thm}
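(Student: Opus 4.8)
The nontrivial direction is that pseudo-effectivity of $K_X + (n-1)D$ implies nefness; the converse is immediate since nef divisors are pseudo-effective. So I assume $K_X + (n-1)D$ is pseudo-effective and argue by contradiction: suppose $K_X + (n-1)D$ is not nef. Then there is a $(K_X + (n-1)D)$-negative extremal ray $R$ of $\NE(X)$. Since $D$ is ample, $D \cdot C > 0$ for every curve class, so $R$ being $(K_X+(n-1)D)$-negative forces $R$ to be $K_X$-negative; in fact $-K_X \cdot C > (n-1) D \cdot C \geq n-1$ for a curve $C$ generating $R$ (here one uses that $D$ is Cartier, hence $D \cdot C$ is a positive integer, so $D \cdot C \geq 1$). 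Thus $l(R) > n-1$.

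The plan is then to split according to whether the extremal contraction $\varphi_R : X \to W$ is birational or a Fano contraction. If $\varphi_R$ is birational, Theorem \ref{f-thm3.2.1} gives $l(R) \leq n-1$, contradicting $l(R) > n-1$. So $\varphi_R$ must be a Fano contraction, and then Theorem \ref{f-thm3.2.9} (applicable once we check $\rho(X) \geq 2$, which holds because a Fano contraction strictly decreases the Picard number, so $\rho(X) \geq \rho(W) + 1 \geq 2$) tells us $\varphi_R : X \to W = \mathbb P^1$ is a $\mathbb P^{n-1}$-bundle. I would then realize $X = \mathbb P_{\mathbb P^1}(\mathcal E)$ explicitly, with $\mathcal E = \bigoplus_{i=1}^n \mathcal O_{\mathbb P^1}(a_i)$, and compute $-K_X$ and $D$ restricted to a fiber and to a section to derive a contradiction with pseudo-effectivity.

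The heart of the matter is this last step: on a $\mathbb P^{n-1}$-bundle over $\mathbb P^1$, I must show that $K_X + (n-1)D$ cannot be pseudo-effective once it is strictly negative on the fiber-curve class $R$. Since $X$ is a smooth toric variety with $\rho(X) = 2$, the cone $\PE(X)$ is spanned by the two extremal torus invariant prime divisors, one of which is a fiber $F$ of $\varphi_R$; the pseudo-effective cone is thus a two-dimensional cone, and being $R$-negative puts $K_X+(n-1)D$ strictly on the far side of the ray spanned by $[F]$, i.e. outside $\PE(X)$. Concretely: write $\PE(X) = \langle [F], [S] \rangle$ where $S$ is the other boundary divisor; since $F \cdot R = 0$ and $K_X + (n-1)D$ is $R$-negative while $S \cdot R > 0$, any expression $K_X + (n-1)D \equiv \lambda[F] + \mu[S]$ would need $\mu < 0$, so $K_X+(n-1)D \notin \PE(X)$ — contradiction. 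The main obstacle I anticipate is making the cone computation on the $\mathbb P^{n-1}$-bundle clean and checking the signs carefully (in particular that $\PE(X)$ really is spanned by torus invariant prime divisors, which holds because $X$ is smooth, cf.\ the argument in the proof of Lemma \ref{f-lem2.2.2}), rather than any deep new idea.
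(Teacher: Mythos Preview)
Your argument is essentially correct, but it takes a longer route than the paper and has one small gap. The gap: your justification that $\rho(X)\ge 2$ is circular --- you argue $\rho(X)\ge\rho(W)+1\ge 2$, but $\rho(W)\ge 1$ only if $W$ is not a point, which already presupposes $\rho(X)\ge 2$. The fix is trivial (when $\rho(X)=1$, pseudo-effective and nef coincide), but it should be said.

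The more interesting point is that the paper avoids Theorem~\ref{f-thm3.2.9} and the explicit bundle computation entirely. Instead of picking a $(K_X+(n-1)D)$-negative ray, the paper passes to the nef threshold: let $\tau$ be the unique rational number with $K_X+\tau D$ nef but not ample. If $\tau\le n-1$ we are done; if $\tau>n-1$, then along any extremal ray $R$ of the morphism $\Phi_{|m(K_X+\tau D)|}$ one has $-K_X\cdot C=\tau\,D\cdot C>n-1$, so Theorem~\ref{f-thm3.2.1} forces that morphism to be of fiber type. Hence $K_X+\tau D$ is nef but not big, i.e.\ it sits on $\partial\PE(X)$, and subtracting the ample class $(\tau-(n-1))D$ lands strictly outside $\PE(X)$ --- contradiction. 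This argument is uniform in $\rho(X)$, uses only Theorem~\ref{f-thm3.2.1}, and replaces your explicit cone computation on the bundle with the single general observation ``nef $+$ not big $\Rightarrow$ boundary of $\PE$, and boundary $-$ ample $\Rightarrow$ outside $\PE$.'' Your final paragraph is really a $\rho=2$ instance of this same principle (with $[F]$ playing the role of the nef-but-not-big class), so the detour through Theorem~\ref{f-thm3.2.9} and the description of $\PE(X)$ on a $\mathbb P^{n-1}$-bundle buys nothing extra.
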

\begin{proof}
If $K_X+(n-1)D$ is nef, then $K_X+(n-1)D$ is obviously 
pseudo-effective. 
So, all we have to do is to see that $K_X+(n-1)D$ is 
nef when it is pseudo-effective. 
From now on, we assume that $K_X+(n-1)D$ is pseudo-effective. 
We take a positive rational number $\tau$ such that 
$K_X+\tau D$ is nef but not ample. 
In some 
literature, $1/\tau$ 
is called the {\em{nef threshold of $D$ with respect to $X$}}. It is not difficult to see that 
$\tau$ is rational since the Kleiman--Mori cone is 
a rational polyhedral cone in our case. 
If $\tau \leq n-1$, then the theorem is obvious since 
\[K_X+(n-1)D=K_X+
\tau D+(n-1-\tau )D\] and $D$ is ample. Therefore, 
we assume that $\tau >n-1$. 
We take a sufficiently large positive integer $m$ such 
that $m(K_X+\tau D)$ is Cartier. 
We consider the toric morphism $f:=\Phi _{|m(K_X+\tau D)|}: X\to 
Y$. 
By the definition of $\tau$, $f$ is not an isomorphism. 
Let $R$ be an extremal ray of $\NE(X/Y)$. 
Let $C$ be any integral 
curve on $X$ such that 
$[C]\in R$. Since $(K_X+\tau D)\cdot C=0$, we 
obtain 
$-K_X\cdot C=\tau D\cdot C>n-1$. 
Therefore, $f$ is not birational by Theorem \ref{f-thm3.2.1}. 
Equivalently, $K_X+\tau D$ is not big. 
Thus, the numerical equivalence class of $K_X+\tau D$ is on the boundary 
of the pseudo-effective cone $\PE(X)$ 
of $X$. 
So, \[K_X+(n-1)D=K_X+\tau D-(\tau -(n-1))D\] is outside 
$\PE(X)$. This is a contradiction. 
Therefore, $K_X+(n-1)D$ is nef when 
$K_X+(n-1)D$ is pseudo-effective. 
\end{proof}

As a corollary, we obtain the following result, 
which is a correction of Lemma \ref{f-lem4.2.1}. 
It is a variant of Fujita's freeness conjecture for toric 
varieties. Example \ref{f-ex4.2.2} shows that 
the constant $n-1$ in Corollary \ref{f-cor4.2.4} is the 
best. 

\begin{cor}[see Theorem \ref{f-thm1.1}]\label{f-cor4.2.4}
Let $X$ be a 
Gorenstein projective toric $n$-fold and let $D$ be an 
ample Cartier divisor on $X$. 
If $H^0(X, \mathcal O_X(K_X+(n-1)D))\ne 0$, 
then $\mathcal O_X(K_X+(n-1)D)$ is generated 
by its global sections. 
\end{cor}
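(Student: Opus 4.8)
The plan is to deduce this immediately from Theorem \ref{f-thm4.2.3} together with the basepoint-freeness of nef Cartier divisors on toric varieties (Lemma \ref{f-lem2.2.3}). First I would note that the hypothesis $H^0(X,\mathcal O_X(K_X+(n-1)D))\neq 0$ means precisely that the Cartier divisor $K_X+(n-1)D$ is effective, hence in particular pseudo-effective: its numerical class lies in $\PE(X)$ since it is (numerically equivalent to) an effective divisor. Here I am using that $X$ is Gorenstein, so that $K_X$ is Cartier and $K_X+(n-1)D$ is an honest Cartier divisor whose space of global sections detects effectivity; this is what lets us pass from a nonzero section to pseudo-effectivity without worrying about reflexive-sheaf subtleties.

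Next I would invoke Theorem \ref{f-thm4.2.3}: since $X$ is $\mathbb Q$-Gorenstein (indeed Gorenstein) projective toric of dimension $n$ and $D$ is ample Cartier, pseudo-effectivity of $K_X+(n-1)D$ is equivalent to its nefness. Therefore $K_X+(n-1)D$ is nef. Finally, apply Lemma \ref{f-lem2.2.3} with $Y$ a point: a nef Cartier divisor on a complete toric variety is globally generated, i.e. $H^0(X,\mathcal O_X(K_X+(n-1)D))\otimes \mathcal O_X\to \mathcal O_X(K_X+(n-1)D)$ is surjective. This gives exactly the assertion that $\mathcal O_X(K_X+(n-1)D)$ is generated by its global sections.

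There is essentially no obstacle here once Theorem \ref{f-thm4.2.3} is in hand — the corollary is a formal consequence. The only point requiring a moment of care is the very first step, confirming that "$H^0\neq 0$" really does place the class in the pseudo-effective cone: this is immediate because a nonzero section of a line bundle $\mathcal O_X(L)$ with $L$ Cartier exhibits $L$ as linearly (hence numerically) equivalent to an effective Cartier divisor, which by definition lies in $\PE(X)$. After that, Theorem \ref{f-thm4.2.3} and Lemma \ref{f-lem2.2.3} do all the work, and the Gorenstein hypothesis is exactly what is needed to make these two inputs applicable (it guarantees $K_X$, and hence $K_X+(n-1)D$, is Cartier rather than merely $\mathbb Q$-Cartier).
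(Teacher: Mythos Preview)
Your proposal is correct and follows exactly the same route as the paper's proof: nonzero $H^0$ gives pseudo-effectivity, Theorem \ref{f-thm4.2.3} converts this to nefness, and Lemma \ref{f-lem2.2.3} then yields global generation. The paper's proof is just a terser version of what you wrote, including the observation that the Gorenstein hypothesis is what makes $K_X+(n-1)D$ Cartier so that Lemma \ref{f-lem2.2.3} applies.
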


\begin{proof}
If $H^0(X, \mathcal O_X(K_X+(n-1)D))\ne 0$, then 
$K_X+(n-1)D$ is obviously pseudo-effective. 
Then, by Theorem \ref{f-thm4.2.3}, $K_X+(n-1)D$ is 
nef. 
If $K_X+(n-1)D$ is a nef Cartier divisor, then 
the complete linear system $|K_X+(n-1)D|$ is basepoint-free by 
Lemma \ref{f-lem2.2.3}. 
\end{proof} 

By Theorem \ref{f-thm3.2.9}, we can check the 
following result. 

\begin{cor}[Corollary \ref{f-cor1.4}]\label{f-cor4.2.5} 
Let $X$ be a $\mathbb Q$-Gorenstein projective 
toric $n$-fold and let $D$ be an ample Cartier divisor 
on $X$. 
If $\rho (X)\geq 3$, then $K_X+(n-1)D$ is always nef. 
More precisely, 
if $\rho (X)\geq 2$ and $X$ is not a $\mathbb P^{n-1}$-bundle 
over $\mathbb P^1$, then $K_X+(n-1)D$ is nef. 
\end{cor}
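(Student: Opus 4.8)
The plan is to argue by contradiction and reduce everything to Theorem~\ref{f-thm3.2.9}. Since $X$ is projective and toric, $\NE(X)$ is a rational polyhedral cone, so if $K_X+(n-1)D$ were not nef there would exist an extremal ray $R$ of $\NE(X)$ on which $K_X+(n-1)D$ is negative. Pick an integral curve $C$ with $[C]\in R$. Then $(K_X+(n-1)D)\cdot C<0$, and since $D$ is an ample Cartier divisor and $C$ is a complete curve, $D\cdot C$ is a positive integer, hence $D\cdot C\geq 1$. Consequently $-K_X\cdot C>(n-1)D\cdot C\geq n-1$, so in particular $K_X\cdot C<0$ and $R$ is $K_X$-negative; as this inequality holds for every $[C]\in R$, we conclude that $l(R)=\min_{[C]\in R}(-K_X\cdot C)>n-1$.

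Next I would invoke Theorem~\ref{f-thm3.2.9}, whose hypotheses are met: $X$ is a $\mathbb Q$-Gorenstein projective toric $n$-fold with $\rho(X)\geq 2$, and $R$ is a $K_X$-negative extremal ray with $l(R)>n-1$. It follows that the contraction $\varphi_R\colon X\to W$ associated to $R$ is a $\mathbb P^{n-1}$-bundle over $\mathbb P^1$; in particular $X$ itself is a $\mathbb P^{n-1}$-bundle over $\mathbb P^1$ and $\rho(X)=2$. Under the hypothesis that $X$ is not a $\mathbb P^{n-1}$-bundle over $\mathbb P^1$ this is a contradiction, which proves the more precise assertion. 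The first assertion is then immediate: if $\rho(X)\geq 3$ then a fortiori $\rho(X)\geq 2$, and $X$ cannot be a $\mathbb P^{n-1}$-bundle over $\mathbb P^1$ since such a bundle has Picard number $2$; hence the more precise assertion applies and $K_X+(n-1)D$ is nef.

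The argument is short because essentially all of the content is packaged into Theorem~\ref{f-thm3.2.9}. The only step that really deserves attention, and the one place a hypothesis could fail to propagate, is the passage from the assertion that $K_X+(n-1)D$ is not nef to the existence of a $K_X$-negative extremal ray $R$ with $l(R)>n-1$. Here it is essential that $D$ is Cartier, so that $D\cdot C$ is a positive integer and therefore at least $1$; for a merely ample $\mathbb Q$-Cartier divisor one would only obtain a weaker lower bound on $l(R)$, and the reduction to Theorem~\ref{f-thm3.2.9} would break down. One should also record the elementary observation that a $\mathbb P^{n-1}$-bundle over $\mathbb P^1$ has Picard number $2$, since this is what makes the two clauses of the corollary consistent with one another.
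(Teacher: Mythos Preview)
Your proof is correct and follows the same approach as the paper. The paper's own proof is a single sentence invoking Theorem~\ref{f-thm3.2.9}; you have spelled out the implicit contrapositive argument---namely, that failure of nefness yields an extremal ray $R$ with $l(R)>n-1$, which by Theorem~\ref{f-thm3.2.9} forces $X$ to be a $\mathbb P^{n-1}$-bundle over $\mathbb P^1$---and recorded the elementary observation that such a bundle has $\rho(X)=2$.
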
 

\begin{proof}
By Theorem \ref{f-thm3.2.9}, $K_X+(n-1)D$ is nef 
since $\rho (X)\geq 2$ and $X$ is not a 
$\mathbb P^{n-1}$-bundle over $\mathbb P^1$. 
\end{proof}

\subsection{Supplements to Fujita's paper}\label{f-subsec4.3}
This subsection supplements Fujita's paper:~\cite{fujita}. 

We have never seen Corollary \ref{f-cor4.2.4} in the literature.
However, we believe that Fujita could prove Corollary \ref{f-cor4.2.4} without
any difficulties (see Theorem \ref{f-thm4.3.2} below). 
We think that he was not interested in the toric geometry when 
he wrote \cite{fujita}.   
If he was familiar with the toric geometry, then
he would have adopted Example \ref{f-ex4.3.1} in
\cite[(3.5) Remark]{fujita}. 
This example supplements Fujita's remark:~\cite[(3.5) Remark]{fujita}. 
We think that our example is much simpler. 

\begin{ex}\label{f-ex4.3.1}
We fix $N=\mathbb Z^2$. 
We put $e_1=(1,0)$, $e_2=(0,1)$, 
$e_3=(-1,-1)$, and $e_4=(1,2)$. 
We consider the fan $\Sigma$ obtained by subdividing 
$N_{\mathbb R}$ with $e_1$, $e_2$, $e_3$, 
and $e_4$. 
We write $X=X(\Sigma)$, the associated toric variety. 
Then $X$ is Gorenstein and $-K_X$ is ample. 
We put $D=-K_X$. It is obvious that $K_X+D\sim 0$. 
It is easy to see that the Kleiman--Mori cone $\NE(X)$ 
is spanned by the two torus invariant curves 
$E=V(e_4)$ and $E'=V(e_2)$. 
So, we have two extremal contractions. 
By removing $e_4$ from $\Sigma$, we obtain a 
contraction morphism $f:X\to \mathbb P^2$. 
In this case, $E$ is not Cartier although $2E$ is Cartier. 
We note that $-K_X\cdot E=1$. 
The morphism $f$ is the weighted blow-up with the 
weight $(1,2)$ described in Proposition \ref{f-prop3.2.6}. 
Another contraction is obtained by 
removing $e_2$. 
It is a contraction morphism from $X$ to $\mathbb P(1,1,2)$. 
Note that $E'$ is a Cartier divisor on $X$. 
\end{ex}

We close this subsection with the following theorem. 
In Theorem \ref{f-thm4.3.2}, we treat normal Gorenstein projective 
varieties defined over 
$\mathbb C$ with 
only rational singularities, which are not necessarily 
toric. 
So, the readers who are interested only in the toric 
geometry can skip this final theorem. 

\begin{thm}[see \cite{fujita}]\label{f-thm4.3.2} 
Let $X$ be a normal projective variety defined over $\mathbb C$ 
with only rational Gorenstein singularities. 
Let $D$ be an ample Cartier divisor on $X$. 
If $K_X+(n-1)D$ is pseudo-effective with 
$n=\dim X$, 
then $K_X+(n-1)D$ is nef.
\end{thm}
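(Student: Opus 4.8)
The plan is to combine the cone and contraction theorems with Kawamata's estimate of lengths of extremal rays (\cite{kawamata}), playing the role that Theorem \ref{f-thm3.2.1} plays in the toric proof of Theorem \ref{f-thm4.2.3}. First I would record that a normal projective variety over $\mathbb C$ with only rational Gorenstein singularities has canonical, hence Kawamata log terminal, singularities; so $(X,0)$ is klt, the cone and contraction theorems apply to $X$, and the $K_X$-negative part of $\NE(X)$ is rational polyhedral.

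Arguing by contradiction, suppose $K_X+(n-1)D$ is pseudo-effective but not nef. By the cone theorem there is an extremal ray $R$ of $\NE(X)$ with $(K_X+(n-1)D)\cdot R<0$, and since $D$ is ample this forces $K_X\cdot R<0$ as well; let $\varphi_R\colon X\to Z$ be the contraction of $R$. If $\varphi_R$ is birational, then by \cite{kawamata} there is a rational curve $C$ with $[C]\in R$ and $-K_X\cdot C\leq n-1$; since $D$ is an ample Cartier divisor and $C$ is an integral curve we have $D\cdot C\geq 1$, so
\[
(K_X+(n-1)D)\cdot C=K_X\cdot C+(n-1)(D\cdot C)\geq -(n-1)+(n-1)=0,
\]
contradicting that $R$ is $(K_X+(n-1)D)$-negative. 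If instead $\varphi_R$ is of fiber type, then $X$ is covered by an irreducible algebraic family of curves $\{C_t\}$ contracted by $\varphi_R$, all sharing the numerical class of a generator of $R$, so that $(K_X+(n-1)D)\cdot C_t<0$; writing a putative pseudo-effective class of $K_X+(n-1)D$ as a limit of effective $\mathbb R$-divisors $L_i$ and using that $L_i\cdot C_t$ depends only on this fixed class, we would get $C_t\subset\xSupp L_i$ for all $t$ and all $i\gg 0$, hence $\xSupp L_i=X$, which is absurd. In both cases we reach a contradiction, so $K_X+(n-1)D$ must be nef.

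The main obstacle is the length estimate in the birational case: the cone theorem by itself only yields $-K_X\cdot C\leq n$, and what is needed is the sharp bound $-K_X\cdot C\leq n-1$, i.e.\ the (possibly singular) non-toric analogue of Theorem \ref{f-thm3.2.1}. There are two routes. One is to invoke the refined form of Kawamata's estimate --- equivalently the Ionescu--Wi\'sniewski inequality $\dim F+\dim E\geq n+l(R)-1$ for a positive-dimensional fiber $F$ of $\varphi_R$ and the exceptional locus $E$, which gives $l(R)\leq n-1$ in the birational case --- and to check that it remains valid for Kawamata log terminal $X$. The other, closer to Fujita's viewpoint, is to appeal to the classification in \cite{fujita} of polarized pairs $(X,D)$ for which $K_X+(n-1)D$ is not nef; the exceptional pairs (projective space, hyperquadrics, scrolls and generalized cones over curves) all have $K_X+(n-1)D$ negative on a covering family of curves, hence not pseudo-effective, so again the hypothesis is contradicted. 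The remaining ingredients --- rational Gorenstein implies canonical, and the cone and contraction theorems --- are standard in characteristic zero, which is why the theorem is stated over $\mathbb C$.
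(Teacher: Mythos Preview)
Your proposal is sound, and your ``Route 2'' is exactly what the paper does: introduce the nef threshold $\tau$, and when $\tau>n-1$ appeal to \cite[Theorems 1 and 2]{fujita} to force $\tau\in\{n,n+1\}$ with $K_X+\tau D$ nef but not big, so that subtracting the ample class $(\tau-(n-1))D$ from a point of $\partial\PE(X)$ lands outside $\PE(X)$. Your covering-family argument in the fiber-type case is just the hands-on version of this last step.

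Your ``Route 1'' is a genuine alternative that the paper does not take: it replaces Fujita's classification by the single bound $l(R)\le n-1$ for birational extremal contractions on klt varieties, paralleling the toric argument for Theorem~\ref{f-thm4.2.3} much more directly. One caution on the citation: the statement actually proved in \cite{kawamata} is that each positive-dimensional fiber $F$ of the contraction is covered by rational curves $C$ with $-(K_X+\Delta)\cdot C\le 2\dim F$, which by itself gives only $l(R)\le 2(n-1)$. To reach $l(R)\le n-1$ you need the Ionescu--Wi\'sniewski inequality $\dim E+\dim F\ge n+l(R)-1$ in the klt setting; this is known, but you should cite or argue it separately rather than attributing it directly to \cite{kawamata}. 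With that adjustment, Route 1 works and has the advantage of avoiding the case analysis in \cite{fujita}; the paper's route has the advantage of staying entirely within Fujita's framework, which is the declared purpose of subsection~\ref{f-subsec4.3}.
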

\begin{proof}
We take a positive rational number $\tau$ such that 
$K_X+\tau D$ is nef but not ample. 
It is well known that $\tau\leq n+1$ (see \cite[Theorem 1]{fujita}). 
If $\tau\leq n-1$, then the theorem is obvious. 
Therefore, we assume that $n-1<\tau \leq n+1$. 
If $\tau =n+1$, then $X\simeq \mathbb P^n$ and $\mathcal O_X
(D)\simeq \mathcal O_{\mathbb P^n}(1)$. 
In this case, $K_X+(n-1)D$ is not pseudo-effective. 
Thus, we have $n-1<\tau \leq n$ by 
\cite[Theorem 1]{fujita}. 
By \cite[Theorem 2]{fujita} and its proof, it can be checked easily that $\tau=n$ 
and $K_X+\tau D=K_X+nD$ is nef but is not big. 
Therefore, $K_X+nD$ is on the boundary of the pseudo-effective 
cone of $X$. 
So, $K_X+(n-1)D=K_X+nD-D$ is not pseudo-effective. 
This is a contradiction. 
Anyway, we obtain that 
$K_X+(n-1)D$ is nef if $K_X+(n-1)D$ is pseudo-effective. 
\end{proof}


\end{document}